\documentclass[11pt]{article}

\usepackage{amssymb}
\usepackage{amsfonts}
\usepackage{theorem}
\usepackage[leqno]{amsmath}
\usepackage{enumerate}
\usepackage{indentfirst, color}

\rightmargin 1cm
\leftmargin -1cm
\textheight 23cm
\textwidth 16cm
\topmargin -1cm
\oddsidemargin 0cm

\newenvironment{proof}[1][Proof]{\textbf{#1.} }{\hfill $\square$}

\newtheorem{lem}{Lemma}
\newtheorem{thm}{Theorem}
\newtheorem{defin}{Definition}
\newtheorem{rem}{Remark}
\newtheorem{prop}{Proposition}

{\begin{flushleft}\textbf{#1} \textbf{#2}} %
{\end{flushleft}}

\newcommand{\R}{\mathbb{R}}
\newcommand{\N}{\mathbb{N}}
\newcommand{\prb}{\mathbf{P}}
\newcommand{\E}{\mathbb{E}}
\newcommand{\tri}{\mathcal{F}}
\newcommand{\cG}{\mathcal{G}}
\newcommand{\cH}{\mathcal{H}}
\newcommand{\eps}{\varepsilon}
\newcommand{\phii}{\varphi}
\newcommand{\al}{\alpha}

\newcommand{\ind}{\mathbf{1}}

\newcommand{\tr}{\text{Trace}}
\newcommand{\diver}{\mbox{div}}
\newcommand{\tet}{\theta}

\newcommand{\wtil}{\widetilde}

\newcommand{\ope}{\mathcal{L}}
\newcommand{\supp}{\mbox{supp}}

\newcommand{\olaB}{\overleftarrow{dB_r}}
\newcommand{\bS}{\mathbb{S}}
\newcommand{\bH}{\mathbb{H}}
\newcommand{\bB}{\mathbb{B}}
\newcommand{\Prb}{\mathbf{P}}
\newcommand{\cE}{\mathcal{E}}
\newcommand{\cS}{\mathcal{S}}
\newcommand{\cR}{\mathcal{R}}

\bibliographystyle{plain}

\title{Stochastic partial differential equations with singular terminal condition}
\author{A. Matoussi  \thanks{Universit\'e du
Maine, Institut du Risque et de l'Assurance du Mans, Laboratoire Manceau de Math\'ematiques, Avenue Olivier Messiaen, 72085 Le Mans, Cedex 9, France.} \thanks{Research partly supported by the Chair {\it Financial Risks} of the {\it Risk Foundation} sponsored by Soci\'et\'e G\'en\'erale, the Chair {\it Derivatives of the Future} sponsored by the F\'ed\'eration Bancaire Fran\c{c}aise, and the Chair {\it Finance and Sustainable Development }sponsored by EDF and Calyon}
\hspace{0.5cm} L. Piozin \footnotemark[1]  \hspace{0.5cm}  A. Popier \footnotemark[1] \thanks{Research partly supported by ANR STOSYMAP}
}
\date{\today}

\begin{document}

\maketitle

\vspace{1cm}

{\small \textbf{Abstract.} In this paper, we first prove existence and uniqueness of the solution of a backward doubly stochastic differential equation (BDSDE) and of the related stochastic partial differential equation (SPDE) under monotonicity assumption on the generator. Then we study the case where the terminal data is singular, in the sense that it can be equal to $+\infty$ on a set of positive measure. In this setting we show that there exists a minimal solution, both for the BDSDE and for the SPDE. Note that solution of the SPDE means weak solution in the Sobolev sense.

}

\vspace{0.3cm}

{\small \textit{Keywords:} backward doubly stochastic differential equations, stochastic partial differential equations, monotone condition, singular terminal data.

}

\section*{Introduction} 

Backward Doubly Stochastic Differential Equations (BDSDEs for short) have been introduced by Pardoux and Peng \cite{pard:peng:94} to provide a non-linear Feynman-Kac formula for classical solutions of SPDE. The main idea is to introduce in the standard BSDE a second nonlinear term driven by an external noise representing the random perturbation of the nonlinear SPDE. Roughly speaking, the BSDE becomes:
\begin{equation} \label{eq:BDSDE}
Y_{t} = \xi + \int_{t}^{T} f \left( r,Y_{r},Z_{r} \right) dr + \int_t^T g \left( r,Y_{r},Z_{r} \right) \overleftarrow{dB_r} - \int_{t}^{T} Z_{r}dW_{r}, \ 0 \leq t \leq T,
\end{equation}
where $B$ and $W$ are two independent Brownian motions, and $\overleftarrow{dB_r}$ is the backward It\^o integral. And the related class of SPDE is as follows: for $(t,x) \in [0,T] \times \R^{d}$ 
\begin{eqnarray} \label{eq:SPDE_gene}
u(t,x) &= & h(x) + \int_t^T \left[ \ope u(s,x) + f(s,x,u(s,x),(\sigma^* \nabla u)(s,x)) \right] ds \\ \nonumber
& + & \int_t^T g(s,x,u(s,x),(\nabla u \sigma)(s,x)) \overleftarrow{dB_r} .
\end{eqnarray} 
$\ope$ is a second-order differential operator:
$$\ope \phi = b \nabla \phi + \tr (\sigma \sigma^* D^2).$$
SPDE are PDE in which randomness is integrated to account for uncertainty. These equations appear naturally in various applications as for instance, Zakai equation in filtering (\cite{lipt:shir:01a,lipt:shir:01b}), in pathwise stochastic control theory or stochastic control with partial observations \cite{lion:soug:98}.  Pardoux and Peng \cite{pard:peng:94} have proven existence and uniqueness for solutions of BDSDE \eqref{eq:BDSDE} if $f$ and $g$ are supposed to be Lipschitz continuous functions and with square integrability condition on the terminal condition $\xi$ and on the coefficients $f(t,0,0)$ and $g(t,0,0)$. Moreover under smoothness assumptions of the coefficients, Pardoux and Peng prove existence and uniqueness of a classical solution for SPDE \eqref{eq:SPDE_gene} and the connection with solutions of BDSDE \eqref{eq:BDSDE}.

Several generalizations to investigate the weak solution of SPDE \eqref{eq:SPDE_gene} have been developed following different approaches:
\begin{itemize}
\item the technique of stochastic flow (Bally and Matoussi \cite{ball:mato:01}, Matoussi et al. \cite{mato:sabb:zhan:14, mato:sheu:02}, Kunita \cite{kuni:94a}, El Karoui and Mrad \cite{elkr:mrad:13});
\item the approach based on Dirichlet forms and their associated Markov processes (Denis and Stoica \cite{deni:stoi:04}, Bally et al. \cite{ball:pard:stoi:05}, Stoica \cite{stoi:03});
\item stochastic viscosity solution for SPDEs (Buckdahn and Ma \cite{buck:ma:01a,buck:ma:01b}, Lions and Souganidis \cite{lion:soug:00,lion:soug:01}).
\end{itemize}
Above approaches have allowed the study of numerical schemes for the Sobolev solution of semilinear SPDEs via Monte-Carlo methods (time discretization and regression schemes: \cite{bach:lasm:mato:13, bach:gobe:mato:15, mato:sabb:14}. For some general references on SPDE, see among others \cite{dapr:zabc:14,kryl:rozo:82,prev:rock:07,wals:86}. 


Popier in \cite{popi:06} studied the behavior of solutions of BSDE when the terminal condition is allowed to take infinite values on non-negligible set i.e.
\begin{equation*} 
\prb (\xi = +\infty \ \mbox{or} \ \xi = - \infty) > 0.
\end{equation*}
The generator $f$ is given by: $f(y) = -y |y|^{q}$. A minimal solution is constructed by an non decreasing approximation scheme. The main difficulty is the proof of continuity of the minimal solution $Y$ at time $T$. In general $Y$ is a ``supersolution'' and the converse property was proved under stronger sufficient conditions. 
Then in \cite{popi:06} the author established a link with viscosity solutions of the following PDE:
$$\frac{\partial}{\partial t} u+\ope u -u|u|^q=0.$$
$\ope$ is a second order differential operator defined by \eqref{eq:operator}. This PDE has been widely studied by PDE arguments (see among others Baras and Pierre \cite{bara:pier:84} and Marcus and Veron in \cite{marc:vero:99}). It is shown in \cite{marc:vero:99} that every solution of this PDE can be characterized by a final trace which is a couple $(\mathcal{S},\mu)$ where $\mathcal{S}$ is a closed subset of $\R^d$ and $\mu$ a non-negative Radon measure on $\R^d\backslash \mathcal{S}$. The final trace is attained in the following sense:
$$\lim_{t\rightarrow T}\int_{\R^d} u(t,x)\phi(x)dx=\left\{ \begin{array}{ll}
+\infty & \mbox{if } \supp(\phi) \cap \cS \neq \emptyset,\\
\displaystyle \int_{\R^d}\phi(x)\mu(dx) & \mbox{if } \supp(\phi) \subset \R^d\backslash\mathcal{S}.
\end{array} \right.$$
Dynkin and Kuznetsov \cite{dynk:kuzn:97} and Le Gall \cite{lega:96} have proved same kind of results but in a probabilistic framework by using the superprocesses theory.

Our main aim is to extend the results of \cite{ball:mato:01} and of \cite{popi:06} for the following SPDE with singular terminal condition $h$: for any $0\leq t \leq T$
\begin{eqnarray} \nonumber  
u(t,x) & = & h(x) + \int_t^T \left( \ope u(s,x) - u(s,x)|u(s,x)|^q \right) ds \\ \label{eq:sing_SPDE}
& & + \int_t^T g(s,u(s,x),\sigma(s,x)\nabla u(s,x)) \overleftarrow{dB_s}
\end{eqnarray}
where we will assume that $\cS = \{ h = +\infty\}$ is a closed non empty set. Roughly speaking we want to show that there is a (minimal) solution $u$ in the sense that 
\begin{itemize}
\item $u$ belongs to some Sobolev space and is a weak solution of the SPDE on any interval $[0,T-\delta]$, $\delta > 0$,
\item $u$ satisfies the terminal condition: $u(t,x)$ goes to $h(x)$ also in a weak sense as $t$ goes to $T$.
\end{itemize}

There are here two main difficulties. First we have to prove existence and uniqueness of the solution of a BDSDE with monotone generator $f$. To our best knowlegde the closest result on this topic is in Aman \cite{aman:12}. Nevertheless we think that there is a lack in this paper (precisely Proposition 4.2). Indeed for monotone BSDE ($g=0$) the existence of a solution relies on the solvability of the BSDE:
\begin{equation*} 
Y_{t} = \xi + \int_{t}^{T} f \left( r,Y_{r}\right) dr  - \int_{t}^{T} Z_{r}dW_{r}, \ 0 \leq t \leq T.
\end{equation*}
See among other the proof of Theorem 2.2 and Proposition 2.4 in \cite{pard:99}. To obtain a solution for this BSDE, the main trick is to truncate the coefficients with suitable truncation functions in order to have a bounded solution $Y$ (see Proposition 2.2 in \cite{bria:carm:00}). This can not be done for a general BDSDE. Indeed take for example ($\xi=f=0$ and $g=1$): 
\begin{equation*} 
Y_{t} =  \int_t^T \overleftarrow{dB_r} - \int_{t}^{T} Z_{r}dW_{r} = B_T - B_t , \ 0 \leq t \leq T,
\end{equation*}
with $Z=0$. Thus in order to prove existence of a solution for \eqref{eq:BDSDE}, one can not directly follow the scheme of \cite{pard:99}. And thus Proposition 4.2 in \cite{aman:12} is not proved. The first part of this paper is devoted to the existence of a solution for a monotone BDSDE (see Section \ref{sect:mono_BDSDE}) in the space $\cE^2$ (see Definition \ref{def:proc_space}). To realize this project we will restrict the class of functions $f$: they should satisfy a polynomial growth condition (as in \cite{bria:carm:00}). Until now we do not know how to extend this to general growth condition as in \cite{bria:dely:hu:03} or \cite{pard:99}. Moreover under monotonicity assumption on $f$, we also prove that the SPDE \eqref{eq:SPDE_gene} has a unique weak solution (as in \cite{ball:mato:01}).

The second goal of this work is to extend the results of \cite{popi:06} to the doubly stochastic framework. We will consider the generator $f(y)=-y|y|^{q}$ with $q \in \R_+^*$ and a real $\tri^W_{T}$-measurable and non negative random variable $\xi$ such that:
\begin{equation}\label{cond_infinity}
\prb (\xi = +\infty) > 0.
\end{equation}
And we want to find a solution to the following BDSDE:
\begin{equation} \label{eq:sing_BDSDE} 
Y_{t}  =  \xi - \int_{t}^{T} Y_{s}|Y_s|^q ds + \int_t^T g(s,Y_s,Z_s) \overleftarrow{dB_s}  - \int_{t}^{T} Z_{s} dW_{s}.
\end{equation}
The scheme to construct a solution is almost the same as in \cite{popi:06}. Let us emphasize one of main technical difficulties. If $g=0$, we can use the conditional expectation w.r.t. $\tri_t$ to withdraw the martingale part. If $g \neq 0$, this trick is useless and we have to be very careful when we want almost sure property of the solution.

Finally this BDSDE is connected with the stochastic PDE \eqref{eq:sing_SPDE} with singular terminal condition $h$. From the first part of this paper, if $h$ is in $L^{2(q+1)}(\R^d,\rho^{-1}dx)$ or if $\xi = h(X^{t,x}_T)$ belongs to $L^{2(q+1)}(\Omega)$, then there exists a unique weak solution to \eqref{eq:sing_SPDE}. Our aim is to extend this when \eqref{cond_infinity} holds. 

The paper is decomposed as follows. In the first section, we give the mathematical setting and our main contributions. In the next section, we study the existence and uniqueness of a class of monotone BDSDE and SPDE. In Section \ref{sect:exist_min_sol} we construct a (super)solution for the BDSDE with singular terminal condition. In Section \ref{sect:cont} we prove continuity at time $T$ for this solution under sufficient conditions. Finally in the last part, we connect BDSDE and SPDE with a singularity at time $T$. 

\section{Setting and main results} \label{sect:main_results}

Let us now precise our notations. $W$ and $B$ are independent Brownian motions defined on a probability space $(\Omega,\tri,\Prb)$ with values in $\R^d$ and $\R^m$. Let $\mathcal{N}$ denote the class of $\Prb$-null sets of $\tri$. For each $t\in [0,T]$, we define
$$\tri_t = \tri^W_t \vee \tri_{t,T}^B$$
where for any process $\eta$, $\tri^\eta_{s,t} =\sigma \left\{ \eta_r - \eta_s; \ s \leq r \leq t \right\} \vee \mathcal{N}$, $\tri^\eta_t = \tri^\eta_{0,t}$. As in \cite{pard:peng:94} we define the following filtration $(\cG_t, \ t\in [0,T])$ by:
$$\cG_t =  \tri^W_t \vee \tri_{0,T}^B.$$
$\xi$ is a $\tri^W_{T}$-measurable and $\R^N$-valued random variable.

We define by $\bH^p(0,T;\R^N)$ the set of (classes of $d\Prb \times dt$ a.e. equal) $N$-dimensional jointly measurable randon processes $(X_t, \ t \geq 0)$ which satisty:
\begin{enumerate}
\item $\displaystyle \E \left(  \int_0^T |X_t|^2 dt \right)^{p/2} < +\infty$
\item $X_t$ is $\cG_t$-measurable for a.e. $t\in [0,T]$. 
\end{enumerate}
We denote similarly by $\bS^p(0,T;\R^N)$ the set of continuous $N$-dimensional random processes which satisfy:
\begin{enumerate}
\item $\displaystyle \E \left(  \sup_{t\in [0,T]} |X_t|^p  \right) < +\infty$
\item $X_t$ is $\cG_t$-measurable for any $t\in [0,T]$. 
\end{enumerate}
\begin{defin} \label{def:proc_space}
$\ $
\begin{itemize}
\item $\bB^p(0,T)$ is the product $\bS^p(0,T;\R^N) \times \bH^p(0,T;\R^{N\times d})$. 
\item $(Y,Z) \in \cE^p(0,T)$ if $(Y,Z)\in \bB^p(0,T)$ and $Y_t$ and $Z_t$ are $\tri_t$-measurable. 
\end{itemize}
\end{defin}
Finally $C^{p,q}([0,T]\times \R^d; \R^k)$ denotes the space of $\R^k$-valued functions defined on $[0,T]\times \R^d$ which are $p$-times continuously differentiable in $t \in [0,T]$ and $q$-times continuously differentiable in $x \in \R^d$. $C^{p,q}_b([0,T]\times \R^d; \R^k)$ is the subspace of $C^{p,q}([0,T]\times \R^d; \R^k)$ in which all functions have uniformly bounded partial derivatives; and $C^{p,q}_c([0,T]\times \R^d; \R^k)$ the subspace of $C^{p,q}([0,T]\times \R^d; \R^k)$ in which the functions have a compact support w.r.t. $x \in \R^d$.

Now we precise our assumptions on $f$ and $g$. The functions $f$ and $g$ are defined on $[0,T] \times \Omega \times \R^N \times \R^{N\times d}$ with values respectively in $\R^{N}$ and $\R^{N \times m}$. Moreover we consider the following \textbf{Assumptions (A)}.
\begin{itemize}
\item The function $y\mapsto f(t,y,z)$ is continuous and there exists a constant $\mu$ such that for any $(t,y,y',z)$ a.s.
\begin{equation} \label{ineq:monoton} \tag{A1}
\langle y-y', f(t,y,z)- f(t,y',z) \rangle \leq \mu |y-y'|^2 .
\end{equation}
\item There exists $K_f$ such that for any $(t,y,z,z')$ a.s.
\begin{equation} \label{ineq:lip_Z}  \tag{A2}
|f(t,y,z)-f(t,y,z')|^2 \leq K_f |z-z'|^2.
\end{equation}
\item There exists $C_f\geq 0$ and $p > 1$ such that 
\begin{equation} \label{eq:property_growth_f}  \tag{A3}
|f(t,y,z)-f(t,0,z)| \leq C_f(1+|y|^p).
\end{equation}
\item There exists a constant $K_g \geq 0$ and $0 < \eps < 1$ such that for any $(t,y,y',z,z')$ a.s.
\begin{equation} \label{eq:property_Lip_g}\tag{A4}
|g(t,y,z) - g(t,y',z') |^2 \leq K_g |y-y'|^2 + \eps |z-z'|^2.
\end{equation}
\item Finally $(f(t,0,0), \ t \geq 0)$ and $(g(t,0,0), \ t \geq 0)$ are $\tri_t$ measurable with for some $p> 1$
\begin{equation} \label{eq:L2_integ} \tag{A5}
\E \int_0^T\left(  |f(t,0,0)|^2 +  |g(t,0,0)|^2 \right) dt < +\infty.
\end{equation} 
\end{itemize}
Remember that from \cite{pard:peng:94} if $f$ also satisfies: there exists $\tilde K_f$ such that for any $(t,y,y',z)$ a.s.
\begin{equation} \label{eq:property_Lip_f}
|f(t,y,z) - f(t,y',z) | \leq \tilde K_f |y-y'|
\end{equation}
and if $\xi \in L^2(\Omega)$, then there exists a unique solution $(Y,Z)\in \cE^2(0,T)$ to the BDSDE \eqref{eq:BDSDE}. Note that \eqref{eq:property_Lip_f} implies that 
$$|f(t,y,z)-f(t,0,z)| \leq \tilde K_f |y|,$$
thus the growth assumption \eqref{eq:property_growth_f} on $f$ is satisfied with $p=1$. 

In Section \ref{sect:mono_BDSDE} we will prove the following result. 
\begin{thm} \label{thm:existence_sol}
Under assumptions (A) and if the terminal condition $\xi$ satisfies
\begin{equation} \label{eq:term_cond_L2}
\E (|\xi|^2) <+\infty,
\end{equation} 
the BDSDE \eqref{eq:BDSDE} has a unique solution $(Y,Z) \in \cE^2(0,T)$. Moreover if for some $p \geq 1$
\begin{equation} \label{eq:L_2p_condition}
\E \left[ |\xi|^{2p} + \left( \int_0^T\left(  |f(t,0,0)|^2 +  |g(t,0,0)|^2 \right) dt \right)^p \right] <+\infty,
\end{equation} 
then $(Y,Z) \in \cE^{2p}(0,T)$.
\end{thm}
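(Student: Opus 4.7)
The plan is to settle uniqueness by a standard It\^o argument, then obtain existence via two approximations: one regularizing the monotone driver $f$ into Lipschitz drivers $f_n$ so that the Pardoux-Peng theory of \cite{pard:peng:94} applies, and (if needed) a second one truncating the data to pass from the $L^{2p}$ setting to the $L^2$ setting.

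For uniqueness, let $(Y,Z),(Y',Z')\in\cE^2$ be two solutions with the same $\xi$ and apply the BDSDE It\^o formula from \cite{pard:peng:94} to $e^{\lambda t}|Y_t-Y'_t|^2$. Split the cross-increment of $f$ as
$$\langle Y-Y',f(r,Y,Z)-f(r,Y',Z')\rangle=\langle Y-Y',f(r,Y,Z)-f(r,Y',Z)\rangle+\langle Y-Y',f(r,Y',Z)-f(r,Y',Z')\rangle,$$
bounded by $\mu|Y-Y'|^2$ via \eqref{ineq:monoton} and by $\eta|Y-Y'|^2+\tfrac{K_f}{4\eta}|Z-Z'|^2$ via \eqref{ineq:lip_Z} together with Young's inequality, while the $g$-contribution is controlled by $K_g|Y-Y'|^2+\eps|Z-Z'|^2$ through \eqref{eq:property_Lip_g}. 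Choosing $\eta$ so that $\tfrac{K_f}{4\eta}+\eps<1$ and $\lambda$ large enough, Gronwall forces $Y=Y'$ and $Z=Z'$.

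For existence, I would first work under the higher integrability assumption \eqref{eq:L_2p_condition} with $p$ matching the growth exponent of \eqref{eq:property_growth_f}. Approximate $f$ by drivers $f_n(t,y,z)=f(t,\theta_n(y),z)$ where $\theta_n$ is a smooth radial cutoff equal to the identity on $\{|y|\leq n\}$; a short check shows that \eqref{ineq:monoton} and \eqref{ineq:lip_Z} are preserved with the same constants, while $f_n$ inherits Lipschitz continuity in $y$ (with an $n$-dependent constant) from the boundedness of its effective $y$-support, supplemented by mollification in $y$ if $f$ is not already locally Lipschitz. Pardoux-Peng then produces $(Y^n,Z^n)\in\cE^2$. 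An It\^o estimate on $|Y^n_t|^{2p}$, where the forward $W$-martingale is controlled by Burkholder-Davis-Gundy, the quadratic variation of the backward $B$-integral by \eqref{eq:property_Lip_g}, the drift by monotonicity together with \eqref{ineq:lip_Z}, and the $|Y^n|^{2p-2}|Z^n|^2$ term absorbed on the left through Young and $\eps<1$, gives
$$\sup_n\E\left(\sup_{t\in[0,T]}|Y^n_t|^{2p}+\left(\int_0^T|Z^n_r|^2\,dr\right)^p\right)<\infty.$$

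The key obstacle is the Cauchy property in $\cE^2$. Applying It\^o to $|Y^n_t-Y^m_t|^2$ for $m\leq n$, the drivers $f_n$ and $f_m$ agree with $f$ on the event $\{|Y^n|\vee|Y^m|\leq m\}$, where monotonicity closes the estimate; on its complement the polynomial growth \eqref{eq:property_growth_f} produces an error of order $\E\int_0^T(1+|Y^n|^p+|Y^m|^p)|Y^n-Y^m|\mathbf{1}_{|Y^n|\vee|Y^m|>m}\,dr$, which by Cauchy-Schwarz, Chebyshev and the uniform $\cE^{2p}$ bound goes to $0$ as $m\to\infty$. This is precisely where the BDSDE framework diverges from the pure BSDE case of \cite{bria:carm:00}: the forward $dW$-martingale cannot be removed by conditioning since $Y^n$ is adapted to the larger filtration $\tri_t$ containing the backward $B$-increments, so both martingale parts must be handled simultaneously through the BDSDE It\^o formula. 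Passing to the limit using continuity of $f$ in $y$ and lower semicontinuity of norms yields a solution $(Y,Z)\in\cE^{2p}$ of \eqref{eq:BDSDE}. The general $L^2$ case then reduces to this one: approximate $\xi$ by $\xi^k\in L^{2p}$, produce solutions $(Y^k,Z^k)\in\cE^{2p}\subset\cE^2$ by the previous step, and deduce a Cauchy property in $\cE^2$ from the monotonicity-based stability estimate used for uniqueness, whose right-hand side is now driven by $\E|\xi^j-\xi^k|^2\to 0$.
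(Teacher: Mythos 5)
Your architecture---truncate/mollify $f$ in the $y$-variable inside the BDSDE framework so that Pardoux--Peng applies, extract a uniform $\cE^{2p}$ bound, and then run a Cauchy argument in which monotonicity acts on $\{|Y^n|\vee|Y^m|\le m\}$ while the truncation error is killed by Chebyshev plus higher moments---is genuinely different from the paper's, and is workable in outline. But the step on which everything hinges, the uniform $\cE^{2p}$ bound with $p>1$, is not obtained the way you claim. Applying It\^o to $|Y^n_t|^{2p}$ in a BDSDE produces, besides the good term $\tfrac12\int\tr\bigl(D^2\theta(Y^n)Z^n(Z^n)^*\bigr)\ge p\int|Y^n|^{2p-2}|Z^n|^2$ on the left, the backward correction $\tfrac12\int\tr\bigl(D^2\theta(Y^n)gg^*\bigr)$ on the right, and since $\|D^2(|y|^{2p})\|\sim 2p(2p-1)|y|^{2p-2}$ (not $2p|y|^{2p-2}$), assumption \eqref{eq:property_Lip_g} only bounds it by $p(2p-1)\int|Y^n|^{2p-2}\bigl(K_g|Y^n|^2+\eps|Z^n|^2+\dots\bigr)$. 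The coefficient of $|Y^n|^{2p-2}|Z^n|^2$ on the right is thus $p(2p-1)\eps$, which cannot be absorbed by the available $p|Y^n|^{2p-2}|Z^n|^2$ unless $\eps<1/(2p-1)$; the hypothesis gives only $\eps<1$, and $p>1$ is forced by \eqref{eq:property_growth_f}. So ``absorbed on the left through Young and $\eps<1$'' fails, and since your control of the truncation error needs exactly these $2p$-moments, this is a genuine gap. It is repairable: apply It\^o to $|Y^n|^2$ first, where the $\eps<1$ absorption is legitimate, then raise the inequality to the power $p$ and use BDG on both martingale parts---this ``square first, then power $p$'' scheme is how the paper itself obtains its estimate \eqref{eq:estim_2p_DeltaZ}---but as written the step would fail.

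For comparison, the paper never truncates $f$ in $y$ inside the BDSDE: it first treats $f=f(t,y)$, $g=g_t$ by the shift $U_t=Y_t+\int_0^t g_r\overleftarrow{dB_r}$, reducing to an auxiliary monotone BSDE where the genuinely new work is recovering the $\tri_t$-measurability of $Y_t$ (the tail-$\sigma$-algebra lemma); it then handles the $(y,z)$-dependence of $f$ and $g$ by a Picard iteration needing only $L^2$ contraction, and delegates the $\cE^{2p}$ statement to Pardoux--Peng. You legitimately bypass the measurability issue because Pardoux--Peng already delivers $\tri_t$-measurable solutions for your Lipschitz $f_n$, which is an attractive feature of your route. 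Three smaller repairs: for $N>1$, composing $f$ with a radial truncation does not preserve \eqref{ineq:monoton} ``with the same constants'' in general (you only use monotonicity where the truncation is inactive, and the one-sided bound $\langle y, f_n(t,y,z)-f_n(t,0,z)\rangle\le\mu^+|y|^2$ does survive radial truncation, so this is fixable but should be said); in the first stage you must truncate $f(t,0,0)$ and $g(t,0,0)$ as well as $\xi$, since \eqref{eq:L2_integ} is only an $L^2$ condition; and your diagnostic sentence is off---what defeats the Briand--Carmona boundedness trick is the backward $dB$-integral, which conditioning on $\tri_t$ does not remove, not the forward $dW$-martingale, which it does.
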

Using the paper of Aman \cite{aman:12}, this result can be extended to the $L^p$ case: for $p\in (1,2)$, if
\begin{equation*} 
\E \left[ |\xi|^p + \int_0^T\left(  |f(t,0,0)|^p +  |g(t,0,0)|^p \right) dt \right]  <+\infty,
\end{equation*} 
there exists a unique solution in $ \cE^p(0,T)$. We also give (for completeness) a comparison result on the solution of the BDSDE \eqref{eq:BDSDE} and finally we extend Theorem 3.1 in \cite{ball:mato:01} in the monotone case. Let us precise here the setting. 
For all $(t,x) \in [0,T] \times \R^{d}$, we denote by $X^{t,x}$ the solution of the following SDE:
\begin{equation} \label{eq:eds2}
X^{t,x}_{s} = x + \int_{t}^{s} b(X^{t,x}_{r})dr + \int_{t}^{s} \sigma(X^{t,x}_{r})dW_{r}, \ \text{for} \ s \in [t,T],
\end{equation}
and $X^{t,x}_{s} = x \ \text{for} \ s \in [0,t]$. We assume that $b \in C^{2}_b(\R^d;\R^d)$ and $\sigma \in C^3_b(\R^d;\R^{d\times d})$. We consider the following doubly stochastic BSDE for $t \leq s \leq T$:
\begin{equation} \label{eq:fbsde_gene} 
Y^{t,x}_{s}  = h(X^{t,x}_{T}) + \int_{s}^{T} f(r,X^{t,x}_r,Y^{t,x}_{r},Z^{t,x}_{r}) dr + \int_s^T g(r,X^{t,x}_r,Y^{t,x}_r,Z^{t,x}_r) \overleftarrow{dB_r}  - \int_{s}^{T} Z^{t,x}_{r} dW_{r},
\end{equation}
where $h$ is a function defined on $\R^{d}$ with values in $\R$. In order to define the space of solutions, we choose a continuous positive weight function $\rho :\R^d \to \R$. We require only that the derivatives of $\rho$ are in $C^{1}_b(\R^d; \R)$ on the set $\{|x|> R\}$ for some $R$. For example $\rho$ can be $(1+|x|)^\kappa$, $\kappa \in \R$. We assume that the functions $f : [0,T]\times \R^d \times \R^N \times \R^{N\times d} \to \R^N$ and $g : [0,T]\times \R^d \times \R^N \times \R^{N\times d} \to \R^{N\times m}$ are measurable in $(t,x,y,z)$ and w.r.t. $(y,z)$, $f$ and $g$ satisfy Assumptions \eqref{ineq:monoton} to \eqref{eq:property_Lip_g}. The only difference with \cite{ball:mato:01} is that we do not assume that $f$ is Lipschitz continuous w.r.t. $y$. We also assume that 
\begin{equation} \label{eq:cond_L_2p_SPDE}
 \int_{\R^d} \left[|h(x)|^{2p} +  \int_0^T \left( |f(t,x,0,0)|^{2p} + |g(t,x,0,0)|^{2p} \right) dt \right]\rho^{-1}(x) dx  < +\infty.
 \end{equation}
We define the space $\cH(0,T)$ as in \cite{ball:mato:01}. 
\begin{defin} \label{def:sobolev_space}
$\cH(0,T)$ is the set of the random fields $\{u(t,x); \ 0\leq t \leq T, \ x \in \R^d\}$ such that $u(t,x)$ is $\tri_{t,T}^B$-measurable for each $(t,x)$, $u$ and $\sigma^* \nabla u$ belong to $L^2((0,T)\times \Omega\times \R^d; \ ds \otimes d \prb \otimes \rho^{-1}(x) dx)$. On $\cH(0,T)$ we consider the following norm
$$\| u \|_2^2 =\E   \int_{\R^d} \int_0^T \left( |u(s,x)|^2 + |(\sigma^* \nabla u)(s,x)|^2 \right) \rho^{-1}(x)  ds dx.$$
\end{defin}
We summarize our result in the next proposition. 
\begin{prop} \label{prop:existence_sol_monotone_SPDE}
Under assumptions (A) and if Condition \eqref{eq:cond_L_2p_SPDE} holds, then the random field defined by $u(t,x) = Y^{t,x}_t$ is in $\cH(0,T)$ with
\begin{equation} \label{eq:2p_estim}
\E  \int_{\R^d}  \int_0^T  |u(s,x)|^{2p} \rho^{-1}(x) ds  dx.
\end{equation}
Moreover $u$ is the unique weak solution (see Definition \ref{def:weak_sol_SPDE}) of the SPDE \eqref{eq:SPDE_gene}.
\end{prop}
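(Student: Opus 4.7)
The plan is to combine Theorem~\ref{thm:existence_sol} with a Lipschitz approximation of $f$ in order to reduce the monotone situation to the fully Lipschitz framework of Theorem~3.1 in \cite{ball:mato:01}, and then to pass to the limit.

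\textbf{Existence of $u$ and the $L^{2p}$-estimate.} For fixed $(t,x)\in [0,T]\times \R^d$, the standard polynomial moment bounds for the SDE \eqref{eq:eds2} (which hold since $b,\sigma$ are smooth and bounded), combined with \eqref{eq:cond_L_2p_SPDE} and Fubini, imply that for $\rho^{-1}dx$-a.e.\ $x$ the terminal value $h(X^{t,x}_T)$ and the inhomogeneities $f(\cdot,X^{t,x},0,0)$, $g(\cdot,X^{t,x},0,0)$ satisfy the integrability condition \eqref{eq:L_2p_condition}. Theorem~\ref{thm:existence_sol} then provides a unique solution $(Y^{t,x},Z^{t,x})\in\cE^{2p}(0,T)$ of \eqref{eq:fbsde_gene}; the associated $L^{2p}$ a~priori bound, integrated against $\rho^{-1}(x)dx$, yields the estimate~\eqref{eq:2p_estim} and in particular $u\in L^{2p}((0,T)\times\Omega\times\R^d;ds\otimes d\Prb\otimes\rho^{-1}dx)$.

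\textbf{Lipschitz approximation.} Pick a smooth truncation $\chi_n:\R^N\to\R^N$ with $\chi_n(y)=y$ for $|y|\le n$, $|\chi_n(y)|\le n+1$, and $|\nabla\chi_n|\le 1$, and set $f_n(t,x,y,z)=f(t,x,\chi_n(y),z)$. Then $f_n$ is globally Lipschitz in $y$, still satisfies \eqref{ineq:monoton} and \eqref{ineq:lip_Z} with the same constants $\mu$ and $K_f$, and converges pointwise to $f$. For each $n$, Theorem~3.1 of \cite{ball:mato:01} produces a random field $u_n(t,x)=Y^{n,t,x}_t\in\cH(0,T)$ which is the unique weak solution of the SPDE with driver $f_n$.

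\textbf{Passage to the limit and uniqueness.} Applying It\^o's formula to $e^{\beta s}|Y^{n,t,x}_s-Y^{m,t,x}_s|^2$ and using the common monotonicity constant, the Lipschitz property \eqref{ineq:lip_Z}, and \eqref{eq:property_Lip_g} with $\eps<1$ to absorb the backward stochastic integral, one obtains control by the $L^{2p}$-mass of the set $\{|Y^{n,t,x}|>n\wedge m\}$, whose measure shrinks via Markov's inequality and the uniform $L^{2p}$-bound of Step~1. Hence $(Y^{n,t,x},Z^{n,t,x})$ is Cauchy in $\cE^2(0,T)$; integrating against $\rho^{-1}(x)dx$ gives $u_n\to u$ and $\sigma^*\nabla u_n\to \sigma^*\nabla u$ in $L^2((0,T)\times\Omega\times\R^d; ds\otimes d\Prb\otimes\rho^{-1}dx)$, so that $u\in\cH(0,T)$, and the strong convergence allows us to pass to the limit in the weak formulation satisfied by each $u_n$. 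For uniqueness, two weak solutions $u,\bar u$ are compared via the It\^o-type formula for weak SPDE solutions applied to $|u-\bar u|^2$: \eqref{ineq:monoton} absorbs the $f$-difference, and \eqref{ineq:lip_Z} together with \eqref{eq:property_Lip_g} with $\eps<1$ close a Gronwall estimate yielding $u=\bar u$.

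\textbf{Main obstacle.} The delicate point is the strong (rather than merely weak) convergence of $\sigma^*\nabla u_n$ in $L^2(\rho^{-1}dx)$, which is needed to identify the nonlinear limit $f(u,\sigma^*\nabla u)$. This is precisely where the polynomial-growth hypothesis \eqref{eq:property_growth_f} plays a decisive role: combined with the uniform $L^{2p}$-bound of Step~1, it provides the uniform integrability required to upgrade almost-sure convergence along a subsequence to $L^2$-convergence, and hence to identify each $Z^{n,t,x}$ with the associated gradient term in the limit.
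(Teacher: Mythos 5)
There is a genuine gap, and it sits exactly at the step your whole construction rests on. Under assumptions (A) the generator $f$ is only \emph{continuous} and monotone in $y$ with polynomial growth; it is not assumed Lipschitz on bounded sets. Hence your truncation $f_n(t,x,y,z)=f(t,x,\chi_n(y),z)$ is bounded in $y$ but in general \emph{not} globally Lipschitz in $y$, so Theorem 3.1 of \cite{ball:mato:01} does not apply to $f_n$ and the approximating fields $u_n$ are never constructed. In addition, for $\R^N$-valued $y$ the composition of a monotone map with the contraction $\chi_n$ need not satisfy \eqref{ineq:monoton} with the same constant $\mu$, so even the structural claims about $f_n$ are unjustified. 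This is precisely the pitfall the paper flags in the introduction (the criticism of Proposition 4.2 in \cite{aman:12} and the remark that the truncation device of \cite{bria:carm:00} cannot be transposed naively): one cannot reduce the monotone case to the Lipschitz case by truncating $y$. Your subsequent Cauchy estimate and the ``main obstacle'' paragraph inherit this problem (and are also internally inconsistent: if the sequence were Cauchy in $\cE^2(0,T)$ you would already have strong $L^2$ convergence of the gradients, so no uniform-integrability upgrade would be needed).

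The paper's proof takes a different, and simpler, route that avoids any approximation of $f$ at the SPDE level. The monotone BDSDE is already solved in $\cE^{2p}(0,T)$ by Theorem \ref{thm:existence_sol}; setting $u(t,x)=Y^{t,x}_t$, $v(t,x)=Z^{t,x}_t$, the estimate \eqref{eq:2p_estim} is obtained from the equivalence-of-norms result of \cite{ball:mato:01} (which transfers $\E|h(X^{t,x}_T)|^{2p}$, etc., back to integrals against $\rho^{-1}dx$ — the point you gloss over with ``Fubini''). Then the nonlinearity is frozen: $F_s=f(s,x,u,v)$ and $G_s=g(s,x,u,v)$ are shown to belong to the dual space $\cH_{0,\rho}'$ — this, not the identification of gradient limits, is the actual role of the growth condition \eqref{eq:property_growth_f} together with \eqref{eq:cond_L_2p_SPDE} and \eqref{eq:2p_estim} — and Theorem 2.1 of \cite{ball:mato:01} for the \emph{linear} SPDE with data $(H,F,G)$ identifies $v=\sigma^*\nabla u$ and yields the weak formulation of Definition \ref{def:weak_sol_SPDE}; uniqueness is then proved exactly as in \cite{ball:mato:01}. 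If you want to keep an approximation scheme, it must be done as in the paper's Section on monotone BDSDEs (convolution-regularization of the generator with the delicate measurability bookkeeping), not by composing $f$ with a truncation of $y$.
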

Note that Condition \eqref{eq:cond_L_2p_SPDE} is important to ensure that \eqref{eq:2p_estim} holds and therefore the quantity $F_s =f(s,x,u(s,x),(\sigma^* \nabla u)(s,x))$ is in $\cH_{0,\rho}'$, which is crucial to prove the existence of a weak solution.

The next sections are devoted to the singular case. The generator $f$ will be supposed to be deterministic and given by: $f(y) = -y|y|^q$ for some $q> 0$. The aim is to prove existence of a solution for BDSDE \eqref{eq:sing_BDSDE} when the non negative random variable $\xi$ satisfies \eqref{cond_infinity}. A possible extension of the notion of solution for a BDSDE with singular terminal condition could be the following (see Definition 1 in \cite{popi:06}). 
\begin{defin}[Solution of the BDSDE \eqref{eq:sing_BDSDE}] \label{definsolution}
Let $q > 0$ and $\xi$ a $\tri^W_{T}$-measurable non negative random variable satisfying condition \eqref{cond_infinity}. We say that the process $(Y,Z)$ is a solution of the BDSDE \eqref{eq:sing_BDSDE} if $(Y,Z)$ is such that $(Y_t,Z_t)$ is $\tri_t$-measurable and:
\begin{enumerate}
\item[(D1)] for all $0 \leq s \leq t < T$: $\displaystyle Y_{s} = Y_{t} - \int_{s}^{t} Y_{r} |Y_{r}|^{q} dr + \int_s^t g(r,Y_r,Z_r) \overleftarrow{dB_r} - \int_{s}^{t} Z_{r} dW_{r}$;
\item[(D2)] for all $t \in [0,T[$, $\displaystyle \E \left( \sup_{0\leq s \leq t} |Y_{s}|^{2} + \int_{0}^{t} 
\| Z_{r} \|^{2} dr \right) < + \infty$;
\item[(D3)] $\prb$-a.s. $\displaystyle \lim_{t \to T} Y_{t} = \xi$.
\end{enumerate}
A solution is said non negative if a.s. for any $t \in [0,T]$, $Y_t \geq 0$. 
\end{defin}

To obtain an a priori estimate of the solution we will assume that $g(t,y,0)=0$ for any $(t,y)$ a.s. This condition will ensure that our solutions will be non negative and bounded on any time interval $[0,T-\delta]$ with $\delta > 0$. Without this hypothesis, integrability of the solution would be more challenging. In Section \ref{sect:exist_min_sol}, we will prove the following result.
\begin{thm} \label{thm:exist_min_sol} 
There exists a process $(Y,Z)$ satisfying Conditions (D1) and (D2) of Definition \ref{definsolution} and such that $Y$ has a limit at time $T$ with 
$$\lim_{t \to T} Y_t \geq \xi.$$
Moreover this solution is minimal: if $(\widetilde Y, \widetilde Z)$ is a non negative solution of \eqref{eq:sing_BDSDE}, then a.s. for any $t$, $\widetilde Y_t \geq Y_t$.
\end{thm}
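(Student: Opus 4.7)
The plan is to mimic the approximation scheme of \cite{popi:06}, construct $(Y,Z)$ as the monotone limit of classical solutions with truncated terminal data, and then to handle the extra difficulty coming from the backward noise. Set $\xi_n = \xi \wedge n$. The generator $f(y) = -y|y|^q$ satisfies the monotonicity condition (A1) with $\mu = 0$ and the polynomial growth (A3) with $p = q+1$; the standing hypothesis $g(t,y,0) = 0$ together with (A4) yields $|g(t,y,z)|^2 \leq K_g |y|^2 + \eps |z|^2$ and in particular $g(t,0,0)=0$. By Theorem \ref{thm:existence_sol} the BDSDE with terminal value $\xi_n$ admits a unique solution $(Y^n,Z^n) \in \cE^2(0,T)$. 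The comparison theorem (to be established in Section \ref{sect:mono_BDSDE}), applied successively to $0 \leq \xi_n$ and to $\xi_n \leq \xi_{n+1}$, gives $0 \leq Y^n \leq Y^{n+1}$ a.s., so that $Y_t := \lim_n Y^n_t \in [0,+\infty]$ is well defined.

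The crucial step is a uniform a priori bound of the form $Y^n_t \leq C_{T-t}$ with $C_{T-t} \to +\infty$ only as $t \to T$. In the BSDE setting of \cite{popi:06} one compares $Y^n$ with $u(t)= [q(T-t)]^{-1/q}$, the explicit solution of the scalar ODE $u' = u^{q+1}$, by conditioning with respect to $\tri_t$ to kill the martingale part. This is the \emph{main obstacle} in our framework: the backward integral $\int_\cdot^T g(s,Y^n_s,Z^n_s)\overleftarrow{dB_s}$ is not annihilated by conditioning on $\tri_t$. We plan to bypass this either by applying It\^o's formula to $(Y^n)^{2}$ (or to a power $(Y^n)^{2p}$) on $[t,T-\delta]$, using $yf(y) = -|y|^{q+2}$ to dissipate the nonlinearity, absorbing the $\eps |z|^2$ from (A4), and controlling the $B$-martingale through a Burkholder--Davis--Gundy and Gr\"onwall argument; or, alternatively, by constructing a pathwise stochastic supersolution obtained from $u$ twisted by a suitable exponential functional of $B$, against which $Y^n$ can be compared via the BDSDE comparison theorem.

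Granted this bound, the standard BDSDE estimates applied to $(Y^n - Y^m, Z^n - Z^m)$ through It\^o's formula, the monotonicity of $f$ and condition (A4) show that $(Y^n)$ is Cauchy in $\bS^2(0,T-\delta)$ and $(Z^n)$ is Cauchy in $\bH^2(0,T-\delta)$ for each $\delta > 0$. Passing to the limit in the BDSDE restricted to $[0,T-\delta]$ produces a pair $(Y,Z)$ satisfying (D1) and (D2). Since $t \mapsto Y^n_t$ is continuous up to $T$ with $Y^n_T = \xi_n$, monotonicity in $n$ gives $\liminf_{t \to T} Y_t \geq \lim_{t \to T} Y^n_t = \xi_n$ a.s.; letting $n \to \infty$ yields $\lim_{t \to T} Y_t \geq \xi$, the limit existing by monotonicity in $t$ combined with the uniform bound. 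Finally, minimality follows from the comparison theorem: if $(\widetilde Y, \widetilde Z)$ is any non-negative solution of \eqref{eq:sing_BDSDE}, condition (D3) on $\widetilde Y$ forces $\widetilde Y_{T-\delta} \geq \xi_n$ for $\delta$ small enough (up to a vanishing error), so the comparison theorem applied on $[0,T-\delta]$ between $\widetilde Y$ and $Y^n$ gives $\widetilde Y_t \geq Y^n_t$, whence $\widetilde Y_t \geq Y_t$ after letting $\delta \to 0$ and $n \to \infty$.
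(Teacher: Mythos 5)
Your overall architecture (truncate $\xi$ at level $n$, solve by Theorem \ref{thm:existence_sol}, compare, pass to the monotone limit) matches the paper, but two essential steps are not actually carried out, and one of them is asserted via a false reason. First, the uniform a priori bound. You correctly identify that conditioning on $\tri_t$ no longer kills the noise, but neither of your proposed remedies closes the gap. The paper's resolution is much simpler and uses precisely the hypothesis you already quoted: since $g(t,y,0)=0$, the deterministic pair $(\Xi^n_t,0)$ with $\Xi^n_t=\bigl(q(T-t)+n^{-q}\bigr)^{-1/q}$ is an exact solution of \eqref{eq:sing_BDSDE} with terminal condition $n$ (the backward integral vanishes identically along it), so Proposition \ref{prop:comp_result} applied to the data $\xi\wedge n\leq n$ immediately yields $Y^n_t\leq \Xi^n_t\leq (q(T-t))^{-1/q}$, i.e. \eqref{ineq:estimapriori}, uniformly in $n$. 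Your first alternative (It\^o on $(Y^n)^2$ or $(Y^n)^{2p}$ plus BDG and Gr\"onwall) cannot produce an $n$-independent bound: any Gr\"onwall estimate carries the terminal datum $\E|\xi\wedge n|^2$, which blows up with $n$; the point of the bound is that the superlinear dissipation must erase the terminal level, and a linear Gr\"onwall argument cannot do that. Your second alternative (a supersolution twisted by an exponential functional of $B$) is left entirely unspecified and is unnecessary once the above observation is made.

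Second, and more seriously, the existence of $\lim_{t\to T}Y_t$ is claimed ``by monotonicity in $t$'': $Y$ is monotone in $n$, not in $t$, and the monotone limit only gives lower semicontinuity and $\liminf_{t\to T}Y_t\geq\xi$. Proving that the left limit exists is the bulk of the paper's proof: for $\xi\geq\alpha>0$ one applies It\^o's formula to $1/(Y^n)^q$, obtains $L^2$-bounds on $\Psi^n_s=\bigl(|Z^n_s|^2-g(s,Y^n_s,Z^n_s)^2\bigr)/(Y^n_s)^{q+2}$, extracts weak $L^2$ limits of the stochastic integrals (Mazur's lemma), and identifies $1/(Y_t)^q=q(T-t)+\E^{\cG_t}(\xi^{-q})-\Theta_t+\Gamma_t+q\,\E^{\cG_t}\int_t^T V_s\overleftarrow{dB_s}$ with $\Theta$ a nonnegative right-continuous supermartingale and $\Gamma$ continuous, so that each term converges as $t\to T$; the general case $\xi\geq0$ is then reached through the two-parameter approximation $\xi^{n,m}=(\xi\wedge n)\vee\frac{1}{m}$ together with the uniform estimate $\E\sup_t|\widetilde Y^{n,m}_t-Y^n_t|^2\leq C/m^2$. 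None of this appears in your proposal. Finally, your minimality step cannot invoke Proposition \ref{prop:comp_result} on $[0,T-\delta]$ as stated, because the ordering $Y^n_{T-\delta}\leq\widetilde Y_{T-\delta}$ is unknown for fixed $\delta$; the paper instead applies It\^o's formula to $\bigl((Y^n-\widetilde Y)^+\bigr)^2$ on $[t,s]$, gets $\E\bigl((Y^n_t-\widetilde Y_t)^+\bigr)^2\leq e^{K_g(s-t)}\E\bigl((Y^n_s-\widetilde Y_s)^+\bigr)^2$, and lets $s\to T$ using only $\liminf_{s\to T}\widetilde Y_s\geq\xi$ and the bound $(Y^n-\widetilde Y)^+\leq n$ (dominated convergence); this is the rigorous form your ``vanishing error'' argument needs to take.
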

It means in particular that $Y_t$ has a left limit at time $T$. 

In general we are not able to prove that (D3) holds. As in \cite{popi:06}, we give sufficient conditions for continuity and we prove it in the Markovian framework. Hence the first hypothesis on $\xi$ is the following:
\begin{equation} \tag{H1} \label{eq:hyp1}
\xi = h ( X_{T}),
\end{equation}
where $h$ is a function defined on $\R^{d}$ with values in $\overline{\R^{+}}$ such that the set of singularity $\cS = \left\{ h=+\infty \right\}$ is closed; and where $X_{T}$ is the value at $t=T$ of a diffusion process or more precisely the solution of a stochastic differential equation (in short SDE):
\begin{equation} \label{eq:eds}
X_{t} = x + \int_{0}^{t} b(r,X_{r})dr + \int_{0}^{t} \sigma(r,X_{r})dW_{r}, \ \text{for} \ t \in [0,T].  
\end{equation}
We will always assume that $b$ and $\sigma$ are defined on $[0,T] \times \R^{d}$, with values respectively in $\R^{d}$ and $\R^{d \times k}$, are measurable w.r.t. the Borelian $\sigma$-algebras, and that there exists a constant $K > 0$ s.t. for all $t \in [0,T]$ and for all $(x,y) \in \R^{d} \times \R^{d}$:
\begin{enumerate}
\item Lipschitz condition:
\begin{equation} \tag{L} \label{eq:lipcond}
|b(t,x)-b(t,y)| + |\sigma(t,x)-\sigma(t,y)| \leq K |x-y|;
\end{equation}
\item Growth condition:
\begin{equation} \tag{G} \label{eq:growthcond}
|b(t,x)| \leq K(1+|x|) \ \mbox{and} \  |\sigma(t,x)| \leq K(1+|x|).
\end{equation}
\end{enumerate}
It is well known that under the previous assumptions, Equation \eqref{eq:eds} has a unique strong solution $X$. 

We denote $\cR = \R^{d} \setminus \cS$. The second hypothesis on $\xi$ is: for all compact set $\mathcal{K} \subset \cR = \R^{d} \setminus \left\{ h = + \infty \right\}$ 
\begin{equation} \tag{H2} \label{eq:hyp2}
h(X_{T}) \mathbf{1}_{\mathcal{K}}(X_{T}) \in \ L^{1} \left( \Omega, \tri_{T}, \prb ;\R \right).
\end{equation}

Unfortunately the above assumptions are not sufficient to prove continuity if $q \leq 2$. Thus we add the following conditions in order to use Malliavin calculus and to prove Equality \eqref{eq:malliavin_int_part}.
\begin{enumerate}
\item The functions $\sigma$ and $b$ are bounded: there exists a constant $K$ s.t.  
\begin{equation} \tag{B} \label{boundcond}
\forall (t,x) \in [0,T] \times \R^{d}, \quad |b(t,x)| + |\sigma (t,x) | \leq K.
\end{equation}
\item The second derivatives of $\sigma \sigma^{*}$ belongs to $L^{\infty}$:
\begin{equation} \tag{D} \label{secondderiv}
\frac{\partial^{2} \sigma \sigma^{*}}{\partial x_{i} \partial x_{j}} \in L^{\infty}([0,T] \times \R^{d}).
\end{equation}
\item $\sigma \sigma^{*}$ is uniformly elliptic, i.e. there exists $\lambda > 0$ s.t. for all $(t,x) \in [0,T] \times \R^{d}$:
\begin{equation} \tag{E} \label{ellipcond}
\forall y \in \R^{d}, \ \sigma \sigma^{*}(t,x)y.y \geq \lambda |y|^{2}.
\end{equation}
\item $h$ is continuous from $\R^{d}$ to $\overline{\R_{+}}$ and: 
\begin{equation} \tag{H3} \label{eq:hyp3}
\forall M \geq 0, \ h \ \mbox{is a Lipschitz function on the set} \ \mathcal{O}_{M} = \left\{ |h| \leq M \right\}.
\end{equation}
\end{enumerate}
\begin{thm}\label{thm:continuity_T}
Under assumptions \eqref{eq:hyp1}, \eqref{eq:hyp2} and \eqref{eq:lipcond} and if 
\begin{itemize}
\item either $q>2$ and \eqref{eq:growthcond};
\item or \eqref{boundcond}, \eqref{secondderiv}, \eqref{ellipcond} and \eqref{eq:hyp3};
\end{itemize}
the minimal non negative solution $(Y,Z)$ of \eqref{eq:sing_BDSDE} satisfies (D3): a.s.
$$\lim_{t\to T} Y_t = \xi.$$
\end{thm}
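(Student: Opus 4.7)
The strategy is to prove the reverse inequality $\limsup_{t\uparrow T}Y_t\le\xi$ a.s., which together with $\liminf_{t\uparrow T}Y_t\ge\xi$ from Theorem \ref{thm:exist_min_sol} gives condition (D3). I work with the monotone approximating sequence $(Y^n,Z^n)$ solving \eqref{eq:sing_BDSDE} with bounded terminal condition $\xi_n=\xi\wedge n$: Theorem \ref{thm:existence_sol} provides these solutions, comparison (using $g(\cdot,y,0)=0$) guarantees $Y^n\ge 0$, and the sequence is non-decreasing with limit $Y$. For each fixed $n$ the Markovian form of Proposition \ref{prop:existence_sol_monotone_SPDE} yields $\lim_{t\uparrow T}Y^n_t=\xi_n$ a.s. The task therefore reduces to interchanging the limits $\lim_{t\uparrow T}\lim_n$ and $\lim_n\lim_{t\uparrow T}$ on the event $\{X_T\in\cR\}$; on $\{X_T\in\cS\}$ one has $\xi=+\infty$ and the inequality is vacuous.

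For the case $q>2$, my plan is to compare $Y^n$ with the explicit deterministic supersolution $\phi(t)=(q(T-t))^{-1/q}$, which satisfies $\phi'(t)=\phi(t)^{q+1}$ and blows up at $T$. Since $g(\cdot,y,0)=0$ and the generator is monotone decreasing in $y$, a pathwise comparison gives $0\le Y^n_t\le\phi(t)$ uniformly in $n$. To sharpen this near the regular set I fix a compact $\mathcal{K}\subset\cR$, choose $\mathcal{K}\subset\mathcal{K}'\subset\cR$, and introduce $\tau_\delta=\inf\{s\ge T-\delta:\ X_s\notin\mathcal{K}'\}\wedge T$. On $\{X_T\in\mathcal{K}\}$, continuity of $X$ forces $\tau_\delta=T$ for $\delta$ small enough, and writing
\begin{equation*}
Y^n_t = Y^n_{\tau_\delta} - \int_t^{\tau_\delta} Y^n_s|Y^n_s|^q\,ds + \int_t^{\tau_\delta} g(s,Y^n_s,Z^n_s)\olaB - \int_t^{\tau_\delta} Z^n_s\,dW_s,
\end{equation*}
I combine $Y^n_{\tau_\delta}\le h(X_{\tau_\delta})+\phi(\tau_\delta)\ind_{\{\tau_\delta<T\}}$ with an $n$-uniform $L^2$-estimate on $(Y^n,Z^n)$ over $[t,\tau_\delta]$ (available because $q>2$ makes $Y^n|Y^n|^q$ integrable against $\phi$), then send $n\to\infty$ and finally $\delta\downarrow 0$.

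For the second set of hypotheses the bound $\phi$ is too crude and the argument relies on Malliavin calculus. Under \eqref{boundcond}, \eqref{secondderiv} and \eqref{ellipcond}, the flow $X$ has a Malliavin derivative $D_sX_t$ with integrable inverse, and differentiating \eqref{eq:sing_BDSDE} in the Malliavin sense yields a linear BDSDE for $D_sY^n_t$ whose coefficients are controlled by $\mu$, $K_f$, $K_g$ and the parameter $\eps<1$. A Bismut-type integration by parts then expresses $Z^n_t$ as a conditional expectation against an explicit weight, which is the identity \eqref{eq:malliavin_int_part} announced in the statement. Combined with the local Lipschitz property \eqref{eq:hyp3} of $h$ on $\mathcal{O}_M=\{h\le M\}$ and a localization through the first exit of $X$ from $\mathcal{O}_M$, this identity provides an $n$-uniform bound on $|Z^n_t|$ on $\{X_T\in\mathcal{O}_M\}$, and Gronwall on shrinking intervals $[t,T-\delta]$ delivers $Y^n_t\to h(X_T)$ uniformly in $n$ on that event.

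The central obstacle throughout is the backward It\^o term $\int g\olaB$. In \cite{popi:06} one takes $\E[\,\cdot\,|\tri_t]$ to eliminate the $W$-martingale, but here the $B$-integral is $\tri_{t,T}^B$-measurable and survives any natural conditioning. My remedy is to exploit $g(\cdot,y,0)=0$, which together with \eqref{eq:property_Lip_g} gives $|g(t,y,z)|^2\le K_g|y|^2+\eps|z|^2$: the $Z$-term is absorbable thanks to $\eps<1$ in every It\^o or Gronwall estimate, while the $Y$-term is handled via the uniform bounds from Theorem \ref{thm:exist_min_sol}. A countable covering of $\cR$ by compacts $\mathcal{K}$ then promotes the local convergence on each $\{X_T\in\mathcal{K}\}$ to convergence on the whole of $\{X_T\in\cR\}$, completing the proof.
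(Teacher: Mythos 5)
Your global reduction---``interchange $\lim_{t\uparrow T}$ and $\lim_{n}$ on $\{X_T\in\cR\}$''---is exactly the crux, and it is never established: the uniform-in-$n$ convergence you invoke at the end of both cases is essentially the statement to be proved. The paper does not prove any pathwise or $n$-uniform estimate of that kind. It proves the identity \eqref{eq:localization} \emph{in expectation}, for $C^2$ test functions $\tet$ compactly supported in $\cR$, then lets $t\to T$ and combines Fatou's lemma with the already known inequality \eqref{liminf} and the existence of the left limit (Theorem \ref{thm:exist_min_sol}) to get $\lim_{t\to T}Y_t=\xi$ a.s.\ on $\{h(X_T)<\infty\}$. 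Taking full expectation in the It\^o formula for $Y^n\phii(X)$ is also how the backward integral is disposed of: it has zero mean and simply disappears, so no $\eps$-absorption of $g$ is needed; your plan to control it pathwise addresses a difficulty the paper's weak formulation avoids altogether.

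Two concrete steps of your sketch would fail. For $q>2$, after stopping at $\tau_\delta$ you must handle $\phi(\tau_\delta)\ind_{\{\tau_\delta<T\}}$ with $\phi(t)=(q(T-t))^{-1/q}$, which blows up precisely when $\tau_\delta$ is close to $T$; you offer no mechanism to beat this blow-up against the small exit probability. The paper's mechanism is different: weights $\Phi^{\al}$, $\al>2(1+1/q)$, vanishing off a compact subset of $\cR$, the H\"older bound of Lemma \ref{lem:tech_estim_1} for the term $Y^n\ope(\Phi^{\al})(X)$, and---this is where $q>2$ actually enters---Cauchy--Schwarz between the weighted estimate \eqref{ineq:sharp_estim_Z} of Proposition \ref{prop:sharp_estim_Z} and the square integrability of $(T-s)^{-1/q}$ (Lemma \ref{lem:tech_estim_2}), which together give the $L^1$ bound on $\Phi^{\al}(X)(Y^n)^{1+q}$ (Lemma \ref{lem:estimL1}) needed to pass to the limit. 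For $q\le 2$, the paper's \eqref{eq:malliavin_int_part} is again an identity in expectation, $\E\int Z^n_r\cdot\nabla\tet(X_r)\sigma(r,X_r)\,dr=\E\int Y^n_r\psi(r,X_r)\,dr$, with $\psi$ built from $\diver(p\sigma^i)/p$ and Aronson's estimates on the density; it is not, and cannot be upgraded to, an $n$-uniform pathwise bound on $|Z^n_t|$: through $Z^n_t=D_tY^n_t$ such a bound would inherit the Lipschitz constant of $h\wedge n$, which \eqref{eq:hyp3} allows to grow with $n$, so your ``Gronwall on shrinking intervals delivers $Y^n_t\to h(X_T)$ uniformly in $n$'' step is unsupported. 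Note also that the integration by parts requires the Markovian representation $Y^n_t=u^n(t,X_t)$, $Z^n_t=v^n(t,X_t)$ with $u^n(t,x)$ $\tri^B_{t,T}$-measurable and the independence of $B$ and $W$, a point absent from your sketch.
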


Finally in section \ref{sect:SPDE}, we show that this minimal solution $(Y,Z)$ of \eqref{eq:sing_BDSDE} is connected to the minimal weak solution $u$ of the SPDE \eqref{eq:sing_SPDE}. More precisely $X^{t,x}$ is the solution of the SDE \eqref{eq:eds2}
with initial condition $x$ at time $t$ and $(Y^{t,x},Z^{t,x})$ is the minimal solution of the BDSDE \eqref{eq:sing_BDSDE} with singular terminal condition $\xi = h(X^{t,x}_T)$. 
\begin{thm}\label{thm:sol_SPDE}
The randon field $u$ defined by $u(t,x) = Y^{t,x}_t$ belongs to $\cH(0,T-\delta)$ for any $\delta > 0$ and is a weak solution of the SPDE \eqref{eq:sing_SPDE} on $[0,T-\delta]\times \R^d$. At time $T$, $u$ satisfies a.s. $\liminf_{t\to T} u(t,x) \geq h(x)$. 

Moreover under the same assumptions of Theorem \ref{thm:continuity_T}, for any function $\phi \in C^{\infty}_c(\R^d)$ with support included in $\cR$, then
$$\lim_{t\to T} \E\left( \int_{\R^d} u(t,x)\phi(x) dx \right) = \int_{\R^d} h(x) \phi(x) dx.$$

Finally $u$ is the minimal non negative solution of \eqref{eq:sing_SPDE}.
\end{thm}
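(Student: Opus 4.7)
The plan is to realize $u$ as the monotone limit of bounded weak SPDE solutions and to transfer the BDSDE results of the preceding sections to the SPDE level via the Feynman--Kac representation of \cite{ball:mato:01}. For $n \in \N$ set $h_n = h \wedge n$ and let $(Y^{n,t,x},Z^{n,t,x})$ be the solution of \eqref{eq:fbsde_gene} with generator $f(y)=-y|y|^q$, diffusion $g$ and terminal condition $h_n(X^{t,x}_T)$. Since $h_n$ is bounded, Proposition \ref{prop:existence_sol_monotone_SPDE} shows that $u_n(t,x) = Y^{n,t,x}_t$ lies in $\cH(0,T)$ and is the unique weak solution of \eqref{eq:SPDE_gene} with datum $h_n$. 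The comparison principle for BDSDE makes $(u_n)$ non-decreasing, the assumption $g(t,y,0)=0$ forces $u_n \geq 0$, and I set $u = \lim_n u_n$.

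Fix $\delta > 0$. The essential input is the a priori interior bound of Section \ref{sect:exist_min_sol}: there exists $C_\delta$ independent of $n$, $(t,x)$ and $\omega$ such that $Y^{n,t,x}_s \leq C_\delta$ for all $s \in [t, T-\delta]$, coming from the strong dissipativity of $-y|y|^q$. It\^o's formula for $(Y^{n,t,x})^2$ on $[t,T-\delta]$ combined with \eqref{eq:property_Lip_g} (the coefficient $\eps<1$ absorbing the $Z$ contribution from $g$) then yields $\sup_n \E \int_t^{T-\delta} |Z^{n,t,x}_s|^2 ds \leq C'_\delta$. Through the flow identification $(u_n(s,X^{t,x}_s),(\sigma^*\nabla u_n)(s,X^{t,x}_s)) = (Y^{n,t,x}_s,Z^{n,t,x}_s)$ these translate into
\begin{equation*}
\sup_n \left( \|u_n\|_{L^\infty([0,T-\delta]\times \Omega \times \R^d)} + \|\sigma^*\nabla u_n\|_{L^2([0,T-\delta]\times \Omega\times \R^d;\,\rho^{-1})} \right) < \infty.
\end{equation*}
Hence $u \in \cH(0,T-\delta)$ and, along a subsequence, $\sigma^*\nabla u_n \rightharpoonup \sigma^*\nabla u$ weakly; monotone and dominated convergence handle the pointwise limits of $u_n$, of the nonlinearity $u_n|u_n|^q$, and of the Lipschitz term $g(s,u_n,\sigma^*\nabla u_n)$. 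Passing to the limit in the weak formulation of \eqref{eq:SPDE_gene} for each $u_n$ shows that $u$ is a weak solution of \eqref{eq:sing_SPDE} on $[0,T-\delta]\times \R^d$. The inequality $\liminf_{t\to T} u(t,x) \geq h(x)$ follows from $u \geq u_n$, the terminal continuity of the bounded $u_n$, and then letting $n\to\infty$.

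For the test-function convergence, fix $\phi \in C^\infty_c(\R^d)$ with $\mathcal{K}=\supp\phi \subset \cR$. Under the assumptions of Theorem \ref{thm:continuity_T}, that theorem supplies the a.s. pointwise limit $Y^{t,x}_t \to h(X^{t,x}_T)$, while hypothesis \eqref{eq:hyp2} supplies the integrable envelope $h(X^{t,x}_T)\mathbf{1}_{\mathcal{K}}(X^{t,x}_T)$, so that Fubini in $(\omega,x)$ and dominated convergence give
\begin{equation*}
\lim_{t \to T}\E \int_{\R^d} u(t,x)\phi(x)\,dx = \int_{\R^d} h(x)\phi(x)\,dx.
\end{equation*}
For minimality, let $\tilde u$ be any non-negative weak solution of \eqref{eq:sing_SPDE}. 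The converse direction of Proposition \ref{prop:existence_sol_monotone_SPDE}, applied on each slice $[0,T-\delta]$, shows that $(\tilde u(s,X^{t,x}_s),(\sigma^*\nabla \tilde u)(s,X^{t,x}_s))$ solves \eqref{eq:sing_BDSDE} on $[t,T-\delta]$; the lower bound $\liminf_{s\to T} \tilde u(s,x) \geq h(x)$ built into the notion of weak solution, together with Fatou, upgrades this couple into a non-negative solution of \eqref{eq:sing_BDSDE} in the sense of Definition \ref{definsolution} with terminal condition $h(X^{t,x}_T)$, and Theorem \ref{thm:exist_min_sol} then forces $\tilde u(t,x) \geq Y^{t,x}_t = u(t,x)$.

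The main obstacle will be this identification step in the minimality argument: linking a generic weak SPDE solution, only a priori in $\cH(0,T-\delta)$ for every $\delta>0$, to a bona fide BDSDE solution requires an It\^o--Ventzell / Bally--Matoussi type formula applied to $\tilde u(s,X^{t,x}_s)$, uniform control near $t=T$ recovered from the same interior estimates as above, and a careful passage to the limit as $\delta \to 0$ to recover the singular terminal trace. The weak convergence of $\sigma^*\nabla u_n$ and its compatibility with the nonlinear structure of \eqref{eq:sing_SPDE} are further delicate points that force one to work on $[0,T-\delta]$ at every step and to synthesize the interior statements into a single limit as $\delta \downarrow 0$.
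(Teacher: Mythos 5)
Your overall architecture coincides with the paper's (truncate to $h\wedge n$, identify $u^n(t,x)=Y^{n,t,x}_t$ as the Bally--Matoussi weak solution, take the monotone limit, and prove minimality by representing a generic weak solution $\widetilde u$ along the flow as a BDSDE solution and invoking Lemma \ref{lem:upper_bound} and the minimality part of Theorem \ref{thm:exist_min_sol}), but two steps of your argument have genuine gaps. First, in the passage to the limit in the weak formulation you only extract a weakly convergent subsequence $\sigma^*\nabla u_n \rightharpoonup \sigma^*\nabla u$ and then assert that ``dominated convergence handles the Lipschitz term $g(s,x,u_n,\sigma^*\nabla u_n)$''. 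This does not work: $g$ is nonlinear in the gradient variable, and weak $L^2$ convergence gives neither pointwise convergence of $\sigma^*\nabla u_n$ nor convergence of $g(s,x,u_n,\sigma^*\nabla u_n)$, so the backward stochastic integral term in \eqref{eq:weak_form_sing_spde} does not pass to the limit with what you have. The missing ingredient is already in the paper: the Cauchy-sequence argument in the proof of Theorem \ref{thm:exist_min_sol} gives \emph{strong} convergence of $Z^{n,t,x}$ to $Z^{t,x}$ in $L^2((0,T-\delta)\times\Omega)$, which, via the equivalence of norms, yields strong convergence of $u^n$ to $u$ in $\cH(0,T-\delta)$; with strong convergence the Lipschitz property of $g$ lets the stochastic integral converge. (The uniform weighted bound on $\sigma^*\nabla u$ is obtained in the paper from Proposition \ref{prop:sharp_estim_Z} and \eqref{ineq:sharp_estim_Z}, not from a bare interior It\^o estimate, but that part of your argument is repairable.)

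The more serious gap is the ``Moreover'' statement. You claim that Theorem \ref{thm:continuity_T} supplies the a.s. pointwise limit $Y^{t,x}_t\to h(X^{t,x}_T)$ as $t\to T$. It does not: it gives $\lim_{s\to T}Y^{t,x}_s=h(X^{t,x}_T)$ a.s. for each \emph{fixed} $(t,x)$, whereas $u(t,x)=Y^{t,x}_t$ is the diagonal, where the starting time moves with the evaluation time; the a.s. convergence of $u(t,\cdot)$ at time $T$ is exactly what the paper declares to be an open question. Moreover \eqref{eq:hyp2} does not provide a dominating function for $u(t,x)\phi(x)$ uniformly in $t$, since the only uniform bound on $u$ is $(q(T-t))^{-1/q}$. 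The paper's proof proceeds differently: the assumptions of Theorem \ref{thm:continuity_T} enter through Section \ref{sect:cont}, where the localization identity \eqref{eq:localization} is established together with the $L^1$ estimates \eqref{eq:estim_L1_1}, \eqref{eq:estim_L1_2}, \eqref{eq:estim_L1_3}; one then integrates \eqref{eq:localization} in $x$ over the compact support of the test function, the three integral terms vanish as $t\to T$ because they are tails of convergent integrals, and $\E\bigl(h(X^{t,x}_T)\tet(X^{t,x}_T)\bigr)\to h(x)\tet(x)$, so dominated convergence gives $\lim_{t\to T}\E\int_{\R^d}u(t,x)\tet(x)dx=\int_{\R^d}h(x)\tet(x)dx$. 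Without \eqref{eq:localization} and these estimates your argument for this limit, and hence for the identification of the terminal trace on $\cR$, does not go through; the remaining parts (the pointwise $\liminf_{t\to T}u(t,x)\geq h(x)$ from $u\geq u^n$, and the minimality step, whose difficulty you correctly flag) are in line with the paper.
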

The almost sure continuity of $u$ at time $T$ is still an open question. In \cite{popi:06}, this property is proved using viscosity solution arguments (relaxation of the boundary condition). Here we cannot do the same trick. This point will be investigated in further publications.

\textit{In the continuation, unimportant constants will be denoted by $C$.}

\section{Monotone BDSDE and SPDE} \label{sect:mono_BDSDE}

As mentioned in the introduction and in the previous section, our first contribution is the extension of the result of Pardoux and Peng \cite{pard:peng:94} with monotone condition \eqref{ineq:monoton}. We begin with the particular case where $f$ does not depend on $z$ and $g$ is a given random field.

\subsection{Case with $f(t,y,z)=f(t,y)$ and $g(t,y,z)=g_t$}

In this special case assume that there exists a solution to the BDSDE: 
\begin{equation} \label{eq:BDSDE_1}
Y_{t} = \xi + \int_{t}^{T} f \left( r,Y_{r} \right) dr + \int_t^T g_r \overleftarrow{dB_r} - \int_{t}^{T} Z_{r}dW_{r}, \ 0 \leq t \leq T.
\end{equation}
Then we have
\begin{equation*} 
Y_{t} + \int_0^t g_r \overleftarrow{dB_r}  = \xi + \int_0^T  g_r \overleftarrow{dB_r} + \int_{t}^{T} f \left( r,Y_{r} \right) dr - \int_{t}^{T} Z_{r}dW_{r}, \ 0 \leq t \leq T.
\end{equation*}
Let us define:
$$U_t = Y_{t} + \int_0^t g_r \overleftarrow{dB_r},\qquad \zeta = \xi + \int_0^T g_r \overleftarrow{dB_r},$$
and
$$\phi(t,y) = f\left( t,y- \int_0^t g_r \olaB \right).$$
Then $(U,Z)$ satisfies:
\begin{equation} \label{eq:auxil_BSDE}
U_{t}  =\zeta + \int_{t}^{T} \phi \left( r,U_{r} \right) dr - \int_{t}^{T} Z_{r}dW_{r}, \ 0 \leq t \leq T.
\end{equation}
The terminal condition $\zeta$ is $\cG_T$-measurable and the generator $\phi$ satisfies the following assumptions.
\begin{enumerate}
\item $\phi$ is continuous w.r.t. $y$ and \eqref{ineq:monoton} is true with the same constant $\mu$.
\item From \eqref{eq:property_growth_f}, there exists $p> 1$ such that  
\begin{equation} 
|\phi (t,y)| \leq h(t) + C_{\phi}(1+|y|^p). 
\end{equation}
where $C_{\phi}= C_f 2^{p-1}$ and
$$h(t) = |f(t,0)| +2^{p-1} \left|\int_0^t g_r \olaB\right|^p.$$
\end{enumerate}
On the solution $(U,Z)$ we impose the following measurability constraints:
\begin{enumerate}
\item[(M1).] The process $(U,Z)$ is adapted to the filtration $(\cG_t, \ t \geq 0)$.
\item[(M2).] The random variable $U_t - \int_0^t g_r \olaB$ is $\tri_t$-measurable for any $0\leq t \leq T$.
\end{enumerate}

Let us assume the boundedness hypothesis on $\xi$, $g$ and $f(t,0)$: there exists a constant $\gamma > 0$ such that a.s. for any $t \geq 0$, 
\begin{equation} \label{eq:bound_cond}
|\xi| + |f(t,0)| + |g_t| \leq \gamma.
\end{equation}
Hence for any $q > 1$
$$\E \left[ |\zeta|^{q} +\left( \int_0^T |h(t)|^{q} dt \right) \right] < +\infty.$$
From \cite{bria:carm:00} or \cite{pard:99} there exists a unique solution $(U,Z)\in \bB^{2}(0,T)$ to the BSDE \eqref{eq:auxil_BSDE} such that (M1) holds and 
$$\E \left[ \sup_{t \in [0,T]} |U_t|^{2} +\left(  \int_0^T |Z_r|^2 dr \right) \right] < +\infty.$$
Theorem 3.6 in \cite{bria:carm:00} also gives that 
\begin{equation}\label{eq:L_2p_estim}
\E \left[ \sup_{t \in [0,T]} |U_t|^{2p} +\left(  \int_0^T |Z_r|^2 dr \right)^p \right] < +\infty.
\end{equation}
But we cannot derive directly from this result that (M2) is satisfied, that is $U_t - \int_0^t g_r \olaB$ is $\tri_t$-measurable for any $0\leq t \leq T$. Therefore we follow the proof of Proposition 3.5 in \cite{bria:carm:00} to prove the existence and uniqueness of the solution $(U,Z)$ with the desired measurability conditions. 
\begin{prop}
Under Assumptions \eqref{ineq:monoton}, \eqref{ineq:lip_Z}, \eqref{eq:property_growth_f} and \eqref{eq:bound_cond}, there exists a unique solution $(U,Z) \in \bB^{2p}(0,T)$ to the BSDE \eqref{eq:auxil_BSDE}, such that (M1) and (M2) hold.
\end{prop}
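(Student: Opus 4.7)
The plan is to perform the monotone-to-Lipschitz approximation of $f$ at the level of the BDSDE \eqref{eq:BDSDE} rather than directly on the auxiliary BSDE \eqref{eq:auxil_BSDE}, so that the Pardoux--Peng theorem produces at each step a solution whose $Y$-component is automatically $\tri_t$-measurable. The relation $U^n_t = Y^n_t + G_t$, with $G_t := \int_0^t g_r \olaB$, will then force (M2) to hold for every approximation, and by stability also at the limit.

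\textbf{Construction.} I would fix a smooth truncation $\theta_n : \R^N \to \R^N$ with $\theta_n(y)=y$ for $|y|\le n$ and $|\theta_n(y)| \le n+1$, and put $f_n(t,y) := f(t,\theta_n(y))$. Each $f_n$ is globally Lipschitz in $y$, still satisfies \eqref{ineq:monoton} with the same constant $\mu$, and enjoys a polynomial bound $|f_n(t,y)| \le |f(t,0)| + C_f(1+|y|^p)$ uniform in $n$. Since $\xi$, $f(t,0)$ and $g_t$ are bounded by \eqref{eq:bound_cond}, the Pardoux--Peng theorem \cite{pard:peng:94} provides a unique $(Y^n,Z^n) \in \cE^2(0,T)$ solving the BDSDE with data $(\xi, f_n, g)$. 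Defining $U^n_t := Y^n_t + G_t$, a direct substitution shows that $(U^n,Z^n)$ solves \eqref{eq:auxil_BSDE} with $\phi_n(t,y) := f_n(t, y - G_t)$ in place of $\phi$. By construction $U^n$ is $\cG$-adapted and $U^n_t - G_t = Y^n_t$ is $\tri_t$-measurable, so (M1) and (M2) are inherited at every level $n$.

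\textbf{A priori bounds.} Applying It\^o's formula to $|U^n|^{2p}$ and using the monotone condition on $\phi_n$ together with $|\phi_n(t,0)| \le |f(t,0)| + C_f(1+|G_t|^p)$ (which has all moments by BDG applied to the bounded $g$), the argument of Theorem 3.6 of \cite{bria:carm:00} will yield
\[
\sup_n \E\Big[\sup_{t \in [0,T]} |U^n_t|^{2p} + \Big(\int_0^T |Z^n_r|^2\,dr\Big)^p\Big] < \infty,
\]
and analogous bounds for every higher moment, since the data has all moments under \eqref{eq:bound_cond}.

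\textbf{Cauchy property, limit, and uniqueness.} Applying It\^o to $|U^n - U^m|^2$ and decomposing $\phi_n(U^n) - \phi_m(U^m)$ as $(\phi_n - \phi)(U^n) + (\phi(U^n) - \phi(U^m)) + (\phi - \phi_m)(U^m)$, the monotonicity of $\phi$ reduces the Cauchy estimate to controlling $\E \int_0^T |U^n_r - U^m_r| \cdot |(\phi_n - \phi)(U^n_r)|\,dr$ and its symmetric counterpart. Since $\phi_n - \phi$ vanishes on $\{|U^n - G|\le n\}$ and is dominated off this set by $C(1 + |U^n|^p + |G|^p)$, the Cauchy--Schwarz inequality combined with the uniform $L^{2p+\eps}$ bound and Markov's inequality makes this error vanish as $n,m \to \infty$. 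Hence $(U^n,Z^n)$ is Cauchy in $\bB^2(0,T)$; the limit lies in $\bB^{2p}(0,T)$ by the uniform moment bound, and (M1)--(M2) transfer to it along an a.s. convergent subsequence. Uniqueness follows by the usual It\^o computation on the difference of two solutions, \eqref{ineq:monoton} absorbing the drift term and Gronwall closing the estimate. The main technical obstacle is precisely this Cauchy step, where the polynomial growth exponent $p$ of $f$ must match exactly the integrability exponent $2p$ of the a priori bound, so that the truncation error can be controlled in $L^2$ via H\"older.
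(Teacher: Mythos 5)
Your construction hinges on the claim that $f_n(t,y) := f(t,\theta_n(y))$ is \emph{globally Lipschitz} in $y$, so that Pardoux--Peng can be invoked at each step for the BDSDE with data $(\xi,f_n,g)$ --- this is precisely what is supposed to hand you the $\tri_t$-measurability of $Y^n$, and hence (M2), for free. But under Assumptions \eqref{ineq:monoton}--\eqref{eq:property_growth_f} the generator is only \emph{continuous} and monotone in $y$; continuity on the compact set $\{|y|\le n+1\}$ does not give Lipschitz continuity there, and composing with a smooth truncation only modifies $f$ for large $|y|$, so it cannot repair a non-Lipschitz behaviour at finite $y$ (take $N=1$ and $f(y)=-\mathrm{sign}(y)\sqrt{|y|}$, which satisfies \eqref{ineq:monoton} with $\mu=0$ and \eqref{eq:property_growth_f}, yet is not Lipschitz near $0$, and neither is $f(\theta_n(\cdot))$). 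So the very first step of your scheme fails: Pardoux--Peng does not apply to $(\xi,f_n,g)$. If you try to repair this by mollifying ($\rho_n * f(t,\theta_n(\cdot))$, which is Lipschitz with a deterministic constant thanks to \eqref{eq:bound_cond} and \eqref{eq:property_growth_f}), you then must check that the approximation still satisfies \eqref{ineq:monoton} with a constant uniform in $n$ --- mollification preserves it, but the composition with a radial truncation does not obviously do so when $N>1$, and your Cauchy estimate and uniform a priori bounds both lean on that uniform monotonicity. This is exactly the kind of point the classical monotone-BSDE constructions (Pardoux, Briand--Carmona) are engineered around, and it is not addressed in your proposal.

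For comparison, the paper does not approximate at the BDSDE level at all: it works on the transformed BSDE \eqref{eq:auxil_BSDE} and reuses the Briand--Carmona approximation, scaling the generator down when the random quantity $h(t)=|f(t,0)|+2^{p-1}|\int_0^t g_r\,\olaB|^p$ exceeds $n$ and then truncating/mollifying in $y$. The price is that the approximating generators depend on $\int_0^t g_r\,\olaB$, so the $\tri_t$-measurability (M2) is \emph{not} automatic; the paper recovers it by tracking, through the Picard iterations, that $Y^n_t$ is measurable with respect to $\tri_t\vee\cH^n$ with $\cH^n=\sigma(h(t)\ind_{h(t)\ge n},\,t\le T)$, and then proving that the tail $\sigma$-algebra $\cH^\infty$ is trivial thanks to the boundedness assumption \eqref{eq:bound_cond}. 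Your instinct --- approximate $f$ by Lipschitz, uniformly monotone, $\tri_t$-adapted generators and solve genuine BDSDEs at each step so that measurability is inherited --- would indeed bypass that triviality lemma and could be made to work (e.g.\ with a Moreau--Yosida type regularization of $f-\mu\,\mathrm{id}$ rather than a plain truncation, combined with the moment/Markov control of the approximation error you sketch), but as written the Lipschitz claim is false and the argument does not go through.
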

\begin{proof}
As written before, we sketch the proof of Proposition 3.5  in \cite{bria:carm:00}. The details can be found in \cite{bria:carm:00} and we just emphasize the main differences. For any $n \geq 1$, we define the following function:
$$\wtil \phi_n(t,y) = \left\{ \begin{array}{cl}
\phi(t,y) & \mbox{if } h(t)\leq n ,\\
\frac{n}{|h(t)|} \phi(t,y) & \mbox{if } h(t) > n.
\end{array} \right.$$
This function is continuous w.r.t. $y$ and \eqref{ineq:monoton} still holds. Moreover
$$|\wtil \phi_n(t,y) | \leq (|h(t)|\wedge n) + C_{\phi}(1+|y|^p).$$
Then as in \cite{bria:carm:00}, we define
$$\phi_n(t,.) = \rho_n * (\Theta_{q(n)+1} (\wtil \phi_n(t,.))),$$
where
\begin{itemize}
\item $q(n) = \lceil e^{1/2} (n+2C_{\phi})\sqrt{1+T^2} \rceil +1$, where $\lceil r \rceil$ stands for the integer part of $r$;
\item $\Theta_n$ is a smooth function with values in $[0,1]$ such that $\Theta_n(u) = 1$ if $|u|\leq n$, $\Theta_n(u) = 0$ if $|u|\geq n+1$;
\item $\rho_n(u) = n^k \rho(n u)$ with $\rho$ a $C^\infty$ non negative function with support equal to the unit ball and such that $\int \rho (u) du = 1$.
\end{itemize}
Since $\zeta$ is in $L^q(\Omega)$ (for any $q > 2p$) there exists a unique solution $(U^n,V^n) \in \bB^q(0,T)$ to the BSDE (see Theorem 4.2 in \cite{bria:dely:hu:03} or Theorem 5.1 in \cite{elka:peng:quen:97}):
\begin{equation} \label{eq:auxil_BSDE_n}
U^n_{t}  =\zeta  + \int_{t}^{T} \phi_n \left( r,U^n_{r} \right) dr - \int_{t}^{T} V^n_{r}dW_{r}, \ 0 \leq t \leq T.
\end{equation}
Moreover for some constant $K_p$ independent of $n$
$$\E \left[ \sup_{t \in [0,T]} |U^n_t|^{2p} +\left(  \int_0^T |V^n_r|^2 dr \right)^p \right] \leq K_p \E \left[  |\zeta |^{2p} +\left(  \int_0^T (|h(r)|+2C_{\phi}) dr \right)^{2p} \right] .$$
We have a strong convergence of the sequence $(U^n,V^n)$ to $(U,Z)$:
$$\lim_{n\to +\infty} \E \left[ \sup_{t \in [0,T]} |U^n_t-U_t|^{2} +\left(  \int_0^T |V^n_r-Z_r|^2 dr \right) \right] = 0.$$
And $(U,Z)$ is the solution of BSDE \eqref{eq:auxil_BSDE} satisfying condition (M1) and $(U,Z) \in \bB^{2p}(0,T)$.

Now let us come to the measurability condition (M2). Recall that 
\begin{eqnarray*}
\phi(t,y) & = & f\left( t,y- \int_0^t g_r \olaB \right),\\
h(t) & = & |f(t,0)| +2^{p-1} \left| \int_0^t g_r \olaB\right|^p,
\end{eqnarray*}
and the process $f(t,.)$ is $\tri_t$-measurable. Hence for any $y$ and $t$
\begin{eqnarray*}
\phi_n(t,y+\int_0^t g_r \olaB) & = & \rho_n * (\Theta_{q(n)+1} (\wtil \phi_n(t,.)))(y+\int_0^t g_r \olaB) \\
& =&  \int \rho_n(z) \Theta_{q(n)+1} \left( \wtil \phi_n\left(t,y-z+\int_0^t g_r \olaB\right)\right) dz. 
\end{eqnarray*}
Now
$$\wtil \phi_n\left(t,x+\int_0^t g_r \olaB\right) = \frac{n}{h(t)\vee n}f(t,x) = \frac{n}{h(t) \ind_{h(t) \geq n} \vee n} f(t,x),$$
Thus $\phi_n(t,y+\int_0^t g_r \olaB) $ is measurable w.r.t. the $\sigma$-algebra $\tri_t \vee \sigma(h(t) \ind_{h(t) \geq n})$. Let us denote
$$\cH^n = \sigma(h(t) \ind_{h(t) \geq n}, \ 0\leq t\leq T).$$
$\phi_n(t,y+\int_0^t g_r \olaB) $ is measurable w.r.t. the $\sigma$-algebra $\tri^n_t=\tri_t \vee \cH^n $. 

If we define 
$$Y^n_t = U^n_t - \int_0^t g_r \olaB,$$
then 
$$Y^n_t = \xi + \int_t^T g_r  \overleftarrow{B_r} + \int_{t}^{T} \phi_n \left( r,Y^n_{r} + \int_0^r g_s \overleftarrow{B_s} \right) dr - \int_{t}^{T} V^n_{r}dW_{r}, \ 0 \leq t \leq T.$$
We claim that $Y^n_t$ is measurable w.r.t. $\tri_t \vee \cH^n$. Indeed let us recall that $(U^n,V^n)$, solution of \eqref{eq:auxil_BSDE_n}, is obtained via a fixed-point theorem. We define the map $\Psi : \bB^{2}(0,T) \to \bB^2(0,T)$ by: $(U,V) = \Psi(u,v)$ with
\begin{equation*} 
U_{t}  =\zeta  + \int_{t}^{T} \phi_n \left( r,u_{r} \right) dr - \int_{t}^{T} V_{r}dW_{r}, \ 0 \leq t \leq T.
\end{equation*}
By classical arguments (see the details in \cite{elka:peng:quen:97}, Theorem 2.1), $\Psi$ is a contraction on $\bB^{2}(0,T)$ (under suitable norms) and $(U^n,V^n)$ is the fixed point of $\Psi$. Set $(U^{n,m},V^{n,m})$ for any $m\in \N$ as follows:  for any $t$, $(U^{n,0}_t,V^{n,0}_t)=(\int_0^t g_s \overleftarrow{B_s},0)$ and for any $m \geq 1$, $(U^{n,m},V^{n,m})= \Psi(U^{n,m-1},V^{n,m-1})$. This sequence converges in $\bB^{2}(0,T)$ to $(U^n,V^n)$. 
Therefore $Y^n$ is the limit in $\bS^2(0,T)$ of $Y^{n,m}$ defined by: $Y^{n,0}_t = 0$ and 
\begin{equation*} 
Y^{n,m}_{t}  =\xi + \int_t^T g_r  \overleftarrow{B_r}  + \int_{t}^{T} \phi_n  \left( r,Y^{n,m-1}_{r} + \int_0^r g_s \overleftarrow{B_s} \right) dr - \int_{t}^{T} V^{n,m}_{r}dW_{r}, \ 0 \leq t \leq T.
\end{equation*}
Now $Y^{n,0}_t$ is trivially $\tri_t$ measurable and 
\begin{eqnarray*} 
Y^{n,m}_{t} & = & \E \left[ \xi + \int_t^T g_r  \overleftarrow{B_r} \bigg| \cG_t \right]  + \E \left[ \int_{t}^{T} \phi_n  \left( r,Y^{n,m-1}_{r} + \int_0^r g_s \overleftarrow{B_s} \right) dr  \bigg| \cG_t \right] \\
& =& \E \left[ \xi + \int_t^T g_r  \overleftarrow{B_r} \bigg| \cG_t \right]  + \E \left[ \Theta  \big| \cG_t \right].
\end{eqnarray*}
From \cite{pard:peng:94} we know that the first term on the right hand side is $\tri_t$ measurable. Assume that $Y^{n,m-1}_{t}$ is $\tri_t \vee \cH^n$ measurable. Since the same holds for $\phi_n  \left( t,y+ \int_0^t g_s \overleftarrow{B_s} \right)$, $\Theta$ depends only on $ \tri^W_T \vee \tri^B_{t,T} \vee \cH^n$. Thus there is no independence between $\tri_t^B$ and $\tri_t \vee \sigma(\Theta)$, but $Y^{n,m}_t$ depends on $\tri_t^B$ only through $\cH^n$. Hence $Y^{n,m}_t$ is $\tri_t \vee \cH^n$ measurable. Passing through the limit, we obtain the desired measurability condition on $Y^n$. 

Now for any $m\in \N$, the sequence $(Y^n_t, \ n\geq m)$ depends on the $\sigma$-algebra
$$\tri_t \vee \overline{\cH_m} = \tri_t \vee \left(  \bigvee_{n\geq m} \cH^n \right).$$
Passing through the limit, we obtain that the limit $Y_t$ depends only on $\displaystyle  \tri_t \vee \cH^\infty= \tri_t \vee \bigcap_{m\in \N} \overline{\cH_m}$. The next lemma shows that $\cH^\infty \subset \tri_0$. We deduce that $Y_t$ is $\tri_t$-measurable, which achieves the proof.
\end{proof}

\begin{lem}
The $\sigma$-algebra $\cH^\infty$ is trivial: for every $A \in \cH^\infty$, $A$ or $A^c=\Omega\setminus A$ is negligible.
\end{lem}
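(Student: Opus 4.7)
The plan is to exploit the boundedness hypothesis \eqref{eq:bound_cond}: since $|f(t,0)|\leq \gamma$ and $|g_t|\leq \gamma$, the backward It\^o integral $t\mapsto \int_0^t g_r \olaB$ admits a continuous version on $[0,T]$, so the random variable
$$M := \sup_{t\in[0,T]} h(t) = \sup_{t\in[0,T]}\left(|f(t,0)| + 2^{p-1}\left|\int_0^t g_r \olaB\right|^p\right)$$
is finite $\prb$-a.s. I would then introduce the increasing sequence of events $\Omega_m := \{M < m\}$, which satisfies $\prb(\Omega_m) \uparrow 1$ as $m\to\infty$.

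The key observation is that on $\Omega_m$, for every $n\geq m$ and every $t\in[0,T]$ one has $h(t) < m \leq n$ and hence $h(t)\ind_{h(t)\geq n} = 0$. In other words, every generator of $\overline{\cH_m} = \bigvee_{n\geq m}\cH^n$ vanishes identically on $\Omega_m$.

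Now I would fix $A \in \cH^\infty$, so that $A\in\overline{\cH_m}$ for every $m$. Using the standard separability fact that any event in a $\sigma$-algebra generated by a family of random variables is already measurable with respect to some \emph{countable} subfamily, I may write $A$ (up to a $\prb$-null set) as $F_m^{-1}(B_m)$ for some Borel $B_m$ and some measurable $F_m\colon\Omega\to\R^{\N}$ assembled from countably many generators of $\overline{\cH_m}$. Since $F_m\equiv 0$ on $\Omega_m$, the set $A\cap\Omega_m$ equals, modulo a null set, either $\Omega_m$ or $\emptyset$.

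The conclusion is then a short monotonicity argument. If $\prb(A\cap\Omega_{m_0})>0$ for some $m_0$, then necessarily $\Omega_{m_0}\subset A$ (mod null); since $\Omega_{m_0}\subset \Omega_m$ for $m\geq m_0$, this forces $\Omega_m\subset A$ (mod null) for all such $m$, whence $\prb(A)\geq\lim_m\prb(\Omega_m)=1$. Otherwise $\prb(A\cap\Omega_m)=0$ for every $m$, and since $\Omega_m\uparrow\Omega$ we obtain $\prb(A)=\lim_m\prb(A\cap\Omega_m)=0$. The only delicate point is formalizing the dichotomy "$A\cap\Omega_m$ equals $\Omega_m$ or $\emptyset$ mod null"; the separability lemma for $\sigma$-algebras generated by uncountable families is what makes the reduction to the vanishing of finitely or countably many generators legitimate.
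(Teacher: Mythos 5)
Your proof is correct and follows essentially the same route as the paper: both rest on the observation that on the event $\{\sup_{t\in[0,T]} h(t) < m\}$ every generator of $\overline{\cH_m}$ vanishes, so any $A \in \cH^\infty$ must contain that event or be disjoint from it, and then on the fact that these events exhaust $\Omega$ in probability (the paper quantifies their complement via BDG and Markov, you use a.s.\ finiteness of the supremum of the backward integral). The only cosmetic difference is your appeal to a separability/countable-reduction lemma, which is unnecessary: the dichotomy holds exactly (not just mod null) because the collection of sets $A$ with $\Omega_m \subset A$ or $\Omega_m \cap A = \emptyset$ is a $\sigma$-algebra containing the generators of $\overline{\cH_m}$.
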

\begin{proof}
Recall that $f$ and $g$ are supposed to be bounded by a constant $\gamma$ and 
$$h(t) = |f(t,0)| +2^{p-1} \left|\int_0^t g_r \olaB\right|^p.$$
Thus for any $n$ 
$$\Prb\left( \sup_{t\in[0,T]} h(t) \geq n \right) \leq \Prb \left( \sup_{t\in[0,T]} \left|\int_0^t g_r \olaB\right|^p \geq 2^{1-p} (n-\gamma) \right).$$
The Burkholder-Davis-Gundy inequality shows that 
$$\E \left(  \sup_{t\in[0,T]} \left|\int_0^t g_r \olaB\right|^p \right) \leq C_p \gamma^p T^{p/2}.$$
And by Markov inequality for $n > \gamma$
$$\Prb \left( \sup_{t\in[0,T]} \left|\int_0^t g_r \olaB\right|^p \geq 2^{1-p} (n-\gamma) \right) \leq \frac{C_p \gamma^p T^{p/2}}{2^{1-p} (n-\gamma)}.$$

Note $\zeta=\sup_{t\in[0,T]} h(t)$. Now if $A \in \cH^n$, then we have two cases: either the set $\{\zeta < n\}$ is included either in $A$ or in $A^c$. And if $n \geq m$, then $\{\zeta < m\} \subset \{\zeta < n\}$. Hence if $A$ is in $\displaystyle  \bigvee_{n\geq m}\cH^n$, then $\{\zeta < m\} \subset A$ or $A \cap \{\zeta < m\} = \emptyset$. And thus $\Prb(A)\wedge \Prb(A^c) \leq C/(m-\gamma)$ for any $m > \gamma$. Finally if $\displaystyle A \in \cH^\infty = \bigcap_{m\in \N}  \bigvee_{n\geq m}\cH^n$, then $\Prb(A) =0$ or $\Prb(A)=1$. 
\end{proof}

From the previous lemma, if we define
$$Y_t = U_t - \int_0^t g_r \olaB$$
we obtain a solution $(Y,Z)$ to the BDSDE:
\begin{equation*} 
Y_{t} = \xi + \int_{t}^{T} f \left( r,Y_{r} \right) dr + \int_t^T g_r \overleftarrow{dB_r} - \int_{t}^{T} Z_{r}dW_{r}, \ 0 \leq t \leq T.
\end{equation*}
From the boundedness assumption on $g$ and since $(U,Z) \in \bB^{2p}(0,T)$, we have:
$$\E \left[ \sup_{t \in [0,T]} |Y_t|^{2p} +\left(  \int_0^T |Z_r|^{2p} dr \right) \right] < +\infty.$$
From the previous proof $Y_t$ is $\tri_t$-measurable. Then using the same argument as in \cite{pard:peng:94}, the process $Z_t$ is also $\tri_t$-measurable. In other words $(Y,Z)\in \cE^{2p}(0,T)$. 

Now we only assume that for some $p\geq 1$
\begin{equation}\label{eq:L_2p_cond}
\E  \left[ |\xi|^{2p} +\left(  \int_0^T (|f(t,0)|^{2} +  |g_t|^{2} ) dt \right)^p \right] < +\infty.
\end{equation}
\begin{lem}
Under Conditions \eqref{eq:L_2p_cond}, the BDSDE \eqref{eq:BDSDE_1} has a unique solution $(Y,Z) \in \cE^{2p}(0,T)$.
\end{lem}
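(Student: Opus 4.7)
The plan is to remove the boundedness assumption \eqref{eq:bound_cond} by an approximation argument, using the previous proposition as the building block.

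First I would introduce truncations. For each $n \geq 1$, set
$\xi^n = \xi\, \mathbf{1}_{|\xi|\leq n}$, $g^n_t = g_t\, \mathbf{1}_{|g_t|\leq n}$, and define the generator
$f^n(t,y) = f(t,y) - f(t,0) + f(t,0)\,\mathbf{1}_{|f(t,0)|\leq n}$.
Then $f^n$ still satisfies \eqref{ineq:monoton} with the same constant $\mu$ (the $y$-dependent part is unchanged) and the polynomial growth bound \eqref{eq:property_growth_f} holds with the same $C_f$, while $|\xi^n|+|f^n(t,0)|+|g^n_t|\leq 3n$. Hence the preceding result applies and yields $(Y^n,Z^n)\in\cE^{2p}(0,T)$ solving \eqref{eq:BDSDE_1} with data $(\xi^n,f^n,g^n)$.

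The main step is to show that $(Y^n,Z^n)$ is Cauchy in $\cE^{2p}(0,T)$. I would apply Itô's formula (the BDSDE version, as in Pardoux--Peng \cite{pard:peng:94}) to $|Y^n_t-Y^m_t|^{2p}$. Because $f^n$ and $f^m$ share the same $y$-dependence, the monotonicity inequality gives
$$\langle Y^n-Y^m,\, f^n(r,Y^n)-f^m(r,Y^m)\rangle \leq \mu|Y^n-Y^m|^2 + |Y^n-Y^m|\,|f^n(r,0)-f^m(r,0)|,$$
while the backward integrator contributes the quadratic variation term $|g^n_r-g^m_r|^2$ with a plus sign, and the forward integrator contributes $-|Z^n_r-Z^m_r|^2$. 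Using Young's inequality to absorb the cross terms, taking suprema in $t$, using the Burkholder--Davis--Gundy inequality on both martingale parts, and applying Gronwall's lemma, I expect an estimate of the form
\begin{equation*}
\E\Bigl[\sup_{t\in[0,T]}|Y^n_t-Y^m_t|^{2p} + \Bigl(\int_0^T|Z^n_r-Z^m_r|^2\,dr\Bigr)^{p}\Bigr] \leq C\, \E\Bigl[|\xi^n-\xi^m|^{2p} + \Bigl(\int_0^T|f^n(r,0)-f(r,0)|^2 + |g^n_r-g_r|^2\,dr\Bigr)^{p}\Bigr].
\end{equation*}
By dominated convergence, the right-hand side tends to $0$ as $n,m\to\infty$, thanks to \eqref{eq:L_2p_cond}. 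Therefore $(Y^n,Z^n)$ converges to some $(Y,Z)\in\cE^{2p}(0,T)$ (the $\tri_t$-measurability passes to the limit), and passing to the limit in \eqref{eq:BDSDE_1} (using \eqref{eq:property_growth_f} together with the $L^{2p}$ bound on $Y^n$ to control $f(r,Y^n_r)$ in $L^2$) yields a solution. Uniqueness follows from exactly the same Itô-type computation applied to the difference of two hypothetical solutions, which leads to $Y\equiv Y'$ and then $Z\equiv Z'$ by identifying martingale parts.

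The main obstacle I expect is the $L^{2p}$ a priori estimate: unlike the classical monotone BSDE case, the $|g^n_r-g^m_r|^2$ term appears with a positive sign and the BDG constant for the backward integral must be absorbed carefully into the $\sup_t |Y^n_t-Y^m_t|^{2p}$ term. This forces a judicious choice of constants in the Young inequalities (depending on $p$ and $\eps$ from \eqref{eq:property_Lip_g}, here trivial since $g$ does not depend on $z$), but is otherwise routine once the correct scheme is set up.
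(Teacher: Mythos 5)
Your proposal follows essentially the same route as the paper: truncate $\xi$, $g$ and $f(\cdot,0)$ to reduce to the bounded case handled by the previous proposition, then establish a Cauchy estimate in $\cE^{2p}(0,T)$ by applying It\^o's formula to $|\Delta Y|^{2}$ and $|\Delta Y|^{2p}$ with monotonicity, Young and BDG inequalities (the paper absorbs the constant $\mu$ via the weight $e^{\al t}$ with $\al=2p\mu+1$ rather than Gronwall, which is equivalent), and pass to the limit. The argument is correct, including your observation that the $\tri_t$-measurability is inherited from the truncated solutions and carries over to the limit.
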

\begin{proof}
For any $n\in \N^*$ define $\Theta_n$ by 
$$\Theta_n(y) = \left\{ \begin{array}{cl}
y & \mbox{if } |y| \leq n, \\
\displaystyle n\frac{y}{|y|} & \mbox{if } |y |> n ,
\end{array} \right.$$
and $\xi^n = \Theta_n(\xi)$, $g^n_t = \Theta_n(g_t)$ and $f^n(t,y) = f(t,y)-f(t,0)+\Theta_n(f(t,0))$. Thus for a fixed $n$, there exists a solution $(Y^n,Z^n)$ to the BDSDE \eqref{eq:BDSDE_1} with $\xi^n$, $f^n$ and $g^n$ instead of $\xi$, $f$ and $g$:
\begin{equation*} 
Y^n_{t} = \xi^n + \int_{t}^{T} f^n \left( r,Y^n_{r} \right) dr + \int_t^T g^n_r \overleftarrow{dB_r} - \int_{t}^{T} Z^n_{r}dW_{r}, \ 0 \leq t \leq T.
\end{equation*}
Define for any $n$ and $m$
$$\Delta \xi = \xi^m-\xi^n, \quad \Delta f(t,y) = f^m(t,y) -f^n(t,y),\quad  \Delta g_t = g^m_t - g^n_t,$$
and 
$$\Delta Y_t = Y^m_t - Y^n_t, \quad \Delta Z_t = Z^m_t - Z^n_t.$$

From the It\^o formula with $\al =2 \mu +1 $, we have:
\begin{eqnarray*}
&&e^{\al t} |\Delta Y_t|^2  + \int_t^T e^{\al r} |\Delta Z_r |^2 dr = e^{\al T} |\Delta \xi|^2 + 2\int_t^Te^{\al s} \Delta Y_s ( f^m(s,Y^m_s) - f^n(s,Y^n_s)) ds \\
&& \qquad -\int_t^T \al e^{\al s} |\Delta Y_s|^2 ds - 2 \int_t^T e^{\al s}\Delta Y_s \Delta Z_s dW_s - 2 \int_t^Te^{\al s}  \Delta Y_s \Delta g_s \overleftarrow{dB_s} \\
& &\qquad + \int_t^T e^{\al s}|\Delta g_s|^2 ds .
\end{eqnarray*}
From assumption \eqref{ineq:monoton} on $f$ and $2|ab|\leq a^2 + b^2$, we obtain:
\begin{eqnarray*}
&&e^{\al t} |\Delta Y_t|^2  + \int_t^T e^{\al r} |\Delta Z_r |^2 dr \leq e^{\al T} |\Delta \xi|^2 + \int_t^Te^{\al s}| \Delta f(s,0) |^2 ds \\
&& \qquad - 2 \int_t^T e^{\al s}\Delta Y_s \Delta Z_s dW_s - 2 \int_t^Te^{\al s}  \Delta Y_s \Delta g_s \overleftarrow{dB_s} + \int_t^T e^{\al s}|\Delta g_s|^2 ds .
\end{eqnarray*}
Using BDG inequality (see the proof of Lemma 3.1 in \cite{bria:dely:hu:03} for the details) we deduce that there exists a constant $C$ depending on $p$, $\mu$ and $T$ such that 
\begin{equation} \label{eq:estim_2p_DeltaZ}
\E \left[  \left( \int_0^T |\Delta Z_r |^2 dr \right)^{p} \right] \leq C\E  \left[ \sup_{t\in [0,T]}|\Delta Y_t|^{2p} + \left(  \int_0^T | \Delta f(s,0) |^{2} ds +  \int_0^T |\Delta g_s|^{2} ds \right)^p \right].
\end{equation}

Since $p \geq 1$ we can apply It\^o formula with the $C^2$-function $\theta(y) = |y|^{2p}$ to the process $\Delta Y$. Note that
$$\frac{\partial \theta}{\partial y_i} (y)= 2p y_i |y|^{2p-2}, \frac{\partial^2 \theta}{\partial y_i \partial y_j} (y)= 2p |y|^{2p-2} \delta_{i,j} + 2p(2p-2)y_iy_j|y|^{2p-4}$$
where $\delta_{i,j}$ is the Kronecker delta. Therefore for every  $0\leq t \leq T$ we have:
\begin{eqnarray} \label{eq:Ito_formula_2p}
&&e^{\al t} |\Delta Y_{t}|^{2p} + \frac{1}{2}\int_t^T e^{\al s}\tr \left( D^2\theta(\Delta Y_s) \Delta Z_s \Delta Z_s^* \right) ds = e^{\al T}|\Delta \xi|^{2p}  - \int_t^T\al e^{\al s} |\Delta Y_{s}|^{2p} ds  \\ \nonumber
&& \quad + \int_{t}^{T} 2p e^{\al s}\Delta Y_s |\Delta Y_s|^{2p-2} (f^m(s,Y^m_s) - f^n(s,Y^n_s)) ds \\  \nonumber
& &\quad   - 2 p  \int_{t}^{T}e^{\al s} \Delta Y_{s} |\Delta Y_{s}|^{2p-2} \Delta Z_s dW_s - 2 p  \int_{t}^{T} e^{\al s} \Delta Y_{s} |\Delta Y_{s}|^{2p-2} \Delta g_s \overleftarrow{dB_s}    \\ \nonumber
& &\quad  + \frac{1}{2}\int_t^T e^{\al s}\tr \left( D^2\theta(\Delta Y_s) \Delta g_s \Delta g_s^* \right) ds.
\end{eqnarray}
Remark that  
$$\tr (D^2\theta(y) z z^*)   \geq 2p |y|^{2p-2} |z|^2.$$
Moreover from assumption \eqref{ineq:monoton} on $f$ and Young's inequality 
we obtain:
\begin{eqnarray*}
&& \int_{t}^{T} 2p e^{\al s}\Delta Y_s |\Delta Y_s|^{2p-2} (f^m(s,Y^m_s) - f^n(s,Y^n_s)) ds \\
&& \leq  \int_{t}^{T} 2p \mu e^{\al s} |\Delta Y_s|^{2p} ds + \int_{t}^{T} 2p e^{\al s}\Delta Y_s |\Delta Y_s|^{2p-2}  \Delta f(s,0) ds \\
&& \leq  \int_{t}^{T} 2p \mu e^{\al s} |\Delta Y_s|^{2p} ds + \int_{t}^{T} 2p e^{\al s} |\Delta Y_s|^{2p-1} | \Delta f(s,0) | ds. 
\end{eqnarray*}
The following term 
$$\int_t^T e^{\al s}\tr \left( D^2\theta(\Delta Y_s) \Delta g_s \Delta g_s^* \right) ds$$
can be controlled by:
\begin{eqnarray*}
&&\int_t^T e^{\al s}\tr \left( D^2\theta(\Delta Y_s) \Delta g_s \Delta g_s^* \right) ds  \leq C_p \int_t^T e^{\al s}|\Delta Y_s|^{2p-2} |\Delta g_s|^2  ds
\end{eqnarray*}
Now let us come back to \eqref{eq:Ito_formula_2p}: 
\begin{eqnarray} \label{eq:Ito_formula_2p_2}
&&e^{\al t} |\Delta Y_{t}|^{2p} +p \int_{t}^{T} e^{\al s}|\Delta Y_s|^{2p-2} |\Delta Z_s|^2  ds \leq e^{\al T}|\Delta \xi|^{2p} \\ \nonumber
&& \quad + \int_{t}^{T} 2p e^{\al s} |\Delta Y_s|^{2p-1} | \Delta f(s,0) | ds + C_p \int_t^T e^{\al s}|\Delta Y_s|^{2p-2} |\Delta g_s|^2  ds  \\ \nonumber
&& \quad  +  \left( 2p\mu   -\al \right)\int_t^T e^{\al s} |\Delta Y_{s}|^{2p} ds  \\ \nonumber
& &\quad   - 2 p  \int_{t}^{T}e^{\al s} \Delta Y_{s} |\Delta Y_{s}|^{2p-2} \Delta Z_s dW_s - 2 p  \int_{t}^{T} e^{\al s} \Delta Y_{s} |\Delta Y_{s}|^{2p-2} \Delta g_s \overleftarrow{dB_s}  
\end{eqnarray}
If we note  
\begin{eqnarray*}
X & = & e^{\al T}|\Delta \xi|^{2p} + \int_{0}^{T} 2p e^{\al s} |\Delta Y_s|^{2p-1} | \Delta f(s,0) | ds + C_p \int_0^T e^{\al s}|\Delta Y_s|^{2p-2} |\Delta g_s|^2  ds \\ 
M_t & = & \int_{0}^{t}e^{\al s} \Delta Y_{s} |\Delta Y_{s}|^{2p-2} \Delta Z_s dW_s \\
N_t & = & \int_{t}^{T} e^{\al s} \Delta Y_{s} |\Delta Y_{s}|^{2p-2} \Delta g_s \overleftarrow{dB_s},
\end{eqnarray*}
by Young's inequality, $M$ and $N$ are true martingales. Hence if we choose $\al = 2p\mu+1$, taking the expectation in \eqref{eq:Ito_formula_2p_2} we obtain:
\begin{eqnarray} \label{eq:Ito_formula_2p_3}
&&\E \int_0^T e^{\al s} |\Delta Y_{s}|^{2p} ds  + p \E \int_{0}^{T} e^{\al s}|\Delta Y_s|^{2p-2} |\Delta Z_s|^2  ds \leq \E (X).
\end{eqnarray}
Then for any $\delta > 0$ 
\begin{eqnarray*}
&& \E \langle M \rangle_T^{1/2} \leq \E \left[ \left( \sup_{t\in [0,T]} e^{\al t /2} |\Delta Y_t|^{p}\right) \left( \int_{0}^{T} e^{\al s}|\Delta Y_s|^{2p-2} |\Delta Z_s|^2  ds \right)^{1/2} \right] \\
&& \quad \leq \frac{1}{2}\delta^{2} \E \left( \sup_{t\in [0,T]} e^{\al t} |\Delta Y_t|^{2p}\right)  + \frac{1}{2\delta^{2}}  \E\left(   \int_{0}^{T} e^{\al s}|\Delta Y_s|^{2p-2} |\Delta Z_s|^2  ds \right)^{p} .
\end{eqnarray*}
and
\begin{eqnarray*}
&& \E \langle N \rangle_0^{1/2} \leq \E \left[ \left( \sup_{t\in [0,T]} e^{(2p-1)\al t/(2p)} |\Delta Y_t|^{2p-1}\right) \left( \int_{0}^{T} e^{\al s/p} |\Delta g_s|^2  ds \right)^{1/2} \right] \\
&& \quad \leq \frac{2p-1}{2p} \delta^{-2p/(2p-1)} \E \left( \sup_{t\in [0,T]} e^{\al t} |\Delta Y_t|^{2p}\right)  + \frac{\delta^{2p}}{2p}  \E\left(   \int_{0}^{T} e^{\al s/p} |\Delta g_s|^2  ds \right)^{p} .
\end{eqnarray*}
Coming back to \eqref{eq:Ito_formula_2p_2} with $\al = 2p\mu+1$, and taking the supremum over $t \in [0,T]$ and then the expectation, with the BDG inequality we have:
\begin{eqnarray*} 
&&\E \left( \sup_{t\in [0,T]} e^{\al t} |\Delta Y_{t}|^{2p} \right) \leq \E (X) \\ \nonumber
& &\quad  + \frac{1}{2}\delta^{2} \E \left( \sup_{t\in [0,T]} e^{\al t} |\Delta Y_t|^{2p}\right)  + \frac{1}{2\delta^{2}}  \E\left(   \int_{0}^{T} e^{\al s}|\Delta Y_s|^{2p-2} |\Delta Z_s|^2  ds \right)^{p}    \\ \nonumber
&& \quad + \frac{2p-1}{2p} \delta^{-2p/(2p-1)} \E \left( \sup_{t\in [0,T]} e^{\al t} |\Delta Y_t|^{2p}\right)  + \frac{\delta^{2p}}{2p}  \E\left(   \int_{0}^{T} e^{\al s/p} |\Delta g_s|^2  ds \right)^{p} .
\end{eqnarray*}
We can choose $\delta$ small enough such that with \eqref{eq:Ito_formula_2p_3}, finally 
\begin{equation} \label{eq:estim_2p_DeltaY}
\E \left( \sup_{t\in [0,T]} e^{\al t} |\Delta Y_{t}|^{2p} \right) \leq C  \E \left( X  \right) + C  \E\left(   \int_{0}^{T} e^{\al s/p} |\Delta g_s|^2  ds \right)^{p}
\end{equation}
for some constant $C$ depending only on $\mu$ and $p$. Now once again with Young's inequality for any $\eps > 0$
\begin{eqnarray*} 
&& \E \int_{0}^{T} e^{\al s} |\Delta Y_s|^{2p-1} | \Delta f(s,0) | ds \leq \E \left[ \left( \sup_{t \in [0,T]} e^{\al t(2p-1)/(2p)}|\Delta Y_t|^{2p-1} \right) \int_{0}^{T} e^{\al s/(2p)} | \Delta f(s,0) | ds \right] \\
&& \qquad \leq \eps^{2p/(2p-1)} \frac{2p-1}{2p} \E \left( \sup_{t \in [0,T]} e^{\al t}|\Delta Y_t|^{2p} \right) +\frac{\eps^{-2p}}{2p} \E \left[ \left(  \int_{0}^{T} e^{\al s/p} | \Delta f(s,0) |^2 ds \right)^p \right]
\end{eqnarray*}
and 
\begin{eqnarray*} 
&& \E \int_0^T e^{\al s}|\Delta Y_s|^{2p-2} |\Delta g_s|^2  ds \leq \E \left[ \left( \sup_{t \in [0,T]} e^{\al t(p-1)/p}|\Delta Y_t|^{2p-2} \right) \int_{0}^{T} e^{\al s/p} | \Delta g_s|^2 ds \right] \\
&& \qquad \leq \eps^{p/(p-1)} \frac{p-1}{p} \E \left( \sup_{t \in [0,T]} e^{\al t}|\Delta Y_t|^{2p} \right) +\frac{\eps^{-p}}{p} \E \left[ \left(  \int_{0}^{T} e^{\al s/p} | \Delta g_s |^2 ds \right)^p \right].
\end{eqnarray*}
Using these two inequalities, \eqref{eq:estim_2p_DeltaZ} and \eqref{eq:estim_2p_DeltaY}:
\begin{eqnarray*} 
&& \E \left( \sup_{t\in [0,T]} e^{\al t} |\Delta Y_{t}|^{2p} \right) + \E \left[  \left( \int_0^T e^{\al s/p} |\Delta Z_s |^2 ds \right)^{p} \right] \\
&& \qquad \leq C\E  \left[ e^{\al T} |\Delta \xi|^{2p} + \left(  \int_0^T e^{\al s/p}| \Delta f(s,0) |^{2} ds +  \int_0^T e^{\al s/p} |\Delta g_s|^{2} ds \right)^p \right].
\end{eqnarray*}
Therefore with \eqref{eq:estim_2p_DeltaZ}, $(Y^n,Z^n)$ is a Cauchy sequence which converges to $(Y,Z)$ and the limit process $(Y,Z)\in \cE^{2p}(0,T)$ satisfies the BDSDE \eqref{eq:BDSDE_1}.
\end{proof}

\begin{rem}
Can we assume a weaker growth condition on $f$ ? Suppose that there exists a non decreasing function $\psi : \R^+ \to \R^+$ such that 
$$|f(t,y)|\leq |f(t,0)| + \psi(|y|).$$
Using the same transformation, we have to control:
$$|\phi(t,y)| = |f(t,y+\int_0^t g_r \overleftarrow{dB_r} )| \leq  |f(t,0)| + \psi(|y+\int_0^t g_r \overleftarrow{dB_r}|).$$
If it is possible to find two functions $\psi_1$ and $\psi_2$ such that $\psi(y+z) \leq \psi_1(y) + \psi_2(z)$ and if $\psi_2 (|\int_0^t g_r \overleftarrow{dB_r}|)$ belongs to $L^2(\Omega)$ for any bounded process $g_t$, it may be possible to obtain a solution with the desired properties to the BDSDE \eqref{eq:BDSDE}.

\end{rem}

\subsection{General case}

The general case can be deduced from the previous one by a fixed-point argument. We still assume that Condition \eqref{eq:L_2p_cond} holds. Let us construct the following sequence:  $(Y^0,Z^0) = (0,0)$ and for $n \in \N$ and any $0 \leq t \leq T$
\begin{equation} \label{eq:approx_gene_case}
Y^{n+1}_{t} = \xi + \int_{t}^{T} f \left( r,Y^{n+1}_{r},Z^n_r \right) dr + \int_t^T g \left( r,Y^{n}_{r},Z^{n}_{r} \right) \overleftarrow{dB_r} - \int_{t}^{T} Z^{n+1}_{r}dW_{r}.
\end{equation}
Indeed if 
$$\E \left[ \sup_{t \in [0,T]} |Y^n_t|^{2p} +\left(  \int_0^T |Z^n_r|^2 dr \right)^p \right] < +\infty$$
then from \eqref{eq:property_Lip_g} and \eqref{eq:L2_integ}, the process $g^n_r = g \left( r,Y^{n}_{r},Z^{n}_{r} \right)$ satisfies 
$$\E \left( \int_0^T |g^n_r|^2 dr \right)^p < +\infty.$$
Moreover the process $f^n(r,0) = f(r,0,Z^n_r)$ verifies
$$\E \left(  \int_0^T |f(r,0,Z^n_r)|^2 ds  \right)^p \leq C_p K_f^{2p} \E \left(\int_0^T |Z^n_r|^2 dr \right)^p + C_p \E \left(\int_0^T |f(r,0,0)|^2 dr \right)^p < +\infty.$$
The previous section shows that $(Y^{n+1},Z^{n+1})$ exists and satisfies \eqref{eq:approx_gene_case} with 
$$\E \left[ \sup_{t \in [0,T]} |Y^{n+1}_t|^{2p} +\left(  \int_0^T |Z^{n+1}_r|^2 dr \right)^p \right] < +\infty.$$
Hence the sequence of processes $(Y^{n},Z^{n})$ is well defined.

Now as before define for any $n$ and $m$
$$\Delta Y^{n}_t = Y^{n+1}_t - Y^n_t, \quad \Delta Z^{n}_t = Z^{n+1}_t - Z^n_t ,\quad \Delta g^{n}_t = g(t,Y^{n+1}_t,Z^{n+1}_t) - g(t,Y^n_t,Z^n_t) .$$
From the It\^o formula with $\al > 0 $, we have:
\begin{eqnarray*}
&&e^{\al t} |\Delta Y^{n}_t|^2  + \int_t^T e^{\al r} |\Delta Z^{n}_r |^2 dr =  2\int_t^Te^{\al s} \Delta Y^{n}_s ( f(s,Y^{n+1}_s,Z^n_s) - f(s,Y^n_s,Z^{n-1}_s)) ds \\
&& \qquad -\int_t^T \al e^{\al s} |\Delta Y^{n}_s|^2 ds - 2 \int_t^T e^{\al s}\Delta Y^{n}_s \Delta Z^{n}_s dW_s - 2 \int_t^Te^{\al s}  \Delta Y^{n}_s \Delta g^{n-1}_s \overleftarrow{dB_s} \\
& &\qquad + \int_t^T e^{\al s}|\Delta g^{n-1}_s|^2 ds .
\end{eqnarray*}
Using the Lipschitz assumption on $g$, we have
$$|\Delta g^{n-1}_s|^2 \leq K_g |\Delta Y^{n-1}_s|^2 + \eps |\Delta Z^{n-1}_s|^2.$$
And
$$\Delta Y^{n}_s ( f(s,Y^{n+1}_s,Z^n_s) - f(s,Y^n_s,Z^{n-1}_s)) \leq \mu |\Delta Y^n_s|^2 +\sqrt{K_f} |\Delta Y^n_s ||\Delta Z^{n-1}_s|.$$
Thus 
\begin{eqnarray} \label{eq:cauchy_seq_gene_case}
&&e^{\al t} |\Delta Y^{n}_t|^2  + \int_t^T e^{\al r} |\Delta Z^{n}_r |^2 dr \leq  (2\mu -\al)\int_t^Te^{\al s} |\Delta Y^n_s|^2 ds \\ \nonumber 
&& \qquad + 2 \sqrt{ K_f } \int_t^T e^{\al s} |\Delta Y^n_s ||\Delta Z^{n-1}_s| ds - 2 \int_t^T e^{\al s}\Delta Y^{n}_s \Delta Z^{n}_s dW_s  \\ \nonumber
& &\qquad- 2 \int_t^Te^{\al s}  \Delta Y^{n}_s \Delta g^{n-1}_s \overleftarrow{dB_s} + K_g \int_t^T e^{\al s} |\Delta Y^{n-1}_s|^2 ds + \eps \int_t^T e^{\al s} |\Delta Z^{n-1}_s|^2 ds .
\end{eqnarray}
Using the inequality $ab \leq \eta a^2 + \frac{1}{\eta} b^2$, we have
$$2 \sqrt{ K_f } \int_t^T e^{\al s} |\Delta Y^n_s ||\Delta Z^{n-1}_s| ds \leq \eta K_f  \int_t^T e^{\al s} |\Delta Y^n_s |^2 ds + \frac{1}{\eta} \int_t^T |\Delta Z^{n-1}_s| ds.$$
Therefore taking the expectation in \eqref{eq:cauchy_seq_gene_case} we deduce that 
\begin{eqnarray*}
&&\E e^{\al t} |\Delta Y^{n}_t|^2  +\E \int_t^T e^{\al r} |\Delta Z^{n}_r |^2 dr \leq  (2\mu + \eta K_f -\al)\E \int_t^Te^{\al s} |\Delta Y^n_s|^2 ds \\
&& \qquad + \left( \frac{1}{\eta} +\eps \right) \E \int_t^T e^{\al s}|\Delta Z^{n-1}_s| ds  + K_g\E  \int_t^T e^{\al s} |\Delta Y^{n-1}_s|^2 ds .
\end{eqnarray*}
Take $t=0$, $\eta = \frac{2}{1-\eps}$ and $\al = 2\mu + \frac{2K_f}{1-\eps} +  \frac{2K_g}{1+\eps}$ such that 
\begin{eqnarray} \label{eq:Cauchy_sequ_gene_case_2}
&&\E \int_0^T e^{\al r} |\Delta Z^{n}_r |^2 dr + \frac{2K_g}{1+\eps} \E \int_0^Te^{\al s} |\Delta Y^n_s|^2 ds \\ \nonumber
&&\qquad \leq \left( \frac{1+\eps}{2} \right) \E \int_0^T e^{\al s}|\Delta Z^{n-1}_s| ds  + \left( \frac{1+\eps}{2} \right) \E  \int_0^T \frac{2K_g}{1+\eps} e^{\al s} |\Delta Y^{n-1}_s|^2 ds .
\end{eqnarray}
Since $(1+\eps)/2 < 1$, the sequence $(Y^n,Z^n)$ is a Cauchy sequence in $L^2((0,T)\times \Omega)$ and converges to some process $(Y,Z)$. Moreover by the BDG inequality we also obtain:
\begin{eqnarray*}
&&\E  \sup_{t\in [0,T]} \left| \int_t^T e^{\al s}\Delta Y^{n}_s \Delta Z^{n}_s dW_s \right| \leq  4 \E \left(  \int_0^Te^{2\al s} | \Delta Y^{n}_s |^2 |\Delta Z^{n}_s|^2 ds \right)^{1/2} \\
& &\qquad \leq \frac{1}{4} \E \left( \sup_{t\in [0,T]}  e^{\al t}| \Delta Y^{n}_t |^2 \right) + 64 \E \int_0^T e^{\al s} |\Delta Z^{n}_s|^2 ds,
\end{eqnarray*}
and
\begin{eqnarray*}
&&\E  \sup_{t\in [0,T]} \left| \int_t^Te^{\al s}  \Delta Y^{n}_s \Delta g^{n-1}_s \overleftarrow{dB_s} \right| \leq  4 \E \left(  \int_0^Te^{2\al s} | \Delta Y^{n}_s |^2 |\Delta g^{n-1}_s|^2 ds \right)^{1/2} \\
& &\qquad \leq \frac{1}{4}  \E \left( \sup_{t\in [0,T]}  e^{\al t}| \Delta Y^{n}_t |^2 \right) + 32 \E \int_0^T e^{\al s} |\Delta g^{n-1}_s|^2 ds \\
& &\qquad \leq \frac{1}{4} \E \left( \sup_{t\in [0,T]}  e^{\al t}| \Delta Y^{n}_t |^2 \right) + 32K_g \E \int_0^T e^{\al s} |\Delta Y^{n-1}_s|^2 ds + 32\eps \E \int_0^T e^{\al s} |\Delta Z^{n-1}_s|^2 ds .
\end{eqnarray*}
Coming back to \eqref{eq:cauchy_seq_gene_case} and using \eqref{eq:Cauchy_sequ_gene_case_2} we have for some constant $C$
\begin{eqnarray*} 
&& \E \sup_{t \in [0,T]} e^{\al t} |\Delta Y^{n}_t|^2 \leq  C \E \int_0^T e^{\al r} |\Delta Z^{n-1}_r |^2 dr + C \E \int_0^Te^{\al s} |\Delta Y^{n-1}_s|^2 ds.
\end{eqnarray*}
We deduce also the convergence of $Y^n$ to $Y$ under this strong topology. Therefore $(Y,Z)$ satisfies the general BDSDE:
\begin{equation*}
Y_{t} = \xi + \int_{t}^{T} f \left( r,Y_{r},Z_r \right) dr + \int_t^T g \left( r,Y_{r},Z_{r} \right) \overleftarrow{dB_r} - \int_{t}^{T} Z_{r}dW_{r}, \ 0 \leq t \leq T.
\end{equation*}
Hence we have proved Theorem \ref{thm:existence_sol}. To obtain that $(Y,Z)\in \cE^{2p}(0,T)$ under Condition \eqref{eq:L_2p_condition}, we can just apply Theorem 1.4 in \cite{pard:peng:94} with straightforward modifications.

\subsection{Extension, comparison result}

The extension of $L^p$ solutions, $p\in (1,2)$, is done in Aman \cite{aman:12}. We just want here to recall the comparison principle for BDSDE (see \cite{shi:gu:liu:05}, \cite{lin:wu:11} or \cite{otma:mrha:09} on this topic). We will widely use the result in the next sections.
\begin{prop}\label{prop:comp_result}
Assume that BDSDE \eqref{eq:BDSDE} with data $(f^1,g,\xi^1)$ and $(f^2,g,\xi^2)$ have solutions $(Y^1,Z^1)$ and $(Y^2,Z^2)$ in $\cE^2(0,T)$, respectively. The coefficient $g$ satisfies \eqref{eq:property_Lip_g}. If $\xi^1 \leq \xi^2$, a.s., and $f^1$ satisfies Assumptions \eqref{ineq:monoton} and \eqref{ineq:lip_Z}, for all $t \in [0,T]$, $f^1(t,Y_t^2,Z_t^2) \leq f^2(t,Y_t^2,Z_t^2)$, a.s. (resp. $f^2$ satisfies \eqref{ineq:monoton} and \eqref{ineq:lip_Z}, for all $t \in [0,T]$, $f^1(t,Y_t^1,Z_t^1) \leq f^2(t,Y_t^1,Z_t^1)$, a.s.), then we have $Y_t^1 \leq Y_t^2$, a.s., for all $t\in [0,T]$.
\end{prop}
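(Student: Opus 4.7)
The plan is to apply an It\^o--Tanaka type formula to $((\Delta Y_t)^+)^2 = ((Y^1_t - Y^2_t)^+)^2$ and close the estimate by Gronwall's lemma. Set $\Delta Z = Z^1 - Z^2$, $\Delta\xi = \xi^1 - \xi^2 \leq 0$ and $\Delta g_s = g(s,Y^1_s,Z^1_s) - g(s,Y^2_s,Z^2_s)$. Smoothing $x\mapsto (x^+)^2$ and applying the BDSDE It\^o formula of Pardoux--Peng \cite{pard:peng:94} to $\Delta Y$, then passing to the limit, gives
\begin{eqnarray*}
((\Delta Y_t)^+)^2 + \int_t^T \ind_{\{\Delta Y_s > 0\}} |\Delta Z_s|^2 ds &=& 2 \int_t^T (\Delta Y_s)^+ \bigl[ f^1(s,Y^1_s,Z^1_s) - f^2(s,Y^2_s,Z^2_s) \bigr] ds \\
&& {} + \int_t^T \ind_{\{\Delta Y_s > 0\}} |\Delta g_s|^2 ds + M_t,
\end{eqnarray*}
where $M_t$ collects the two stochastic integrals against $W$ and $\overleftarrow{dB}$, which are genuine martingales (after a routine localization) because $(Y^i,Z^i)\in \cE^2(0,T)$, and where I used $((\Delta\xi)^+)^2 = 0$.

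For the driver, the key manipulation is the standard monotonicity splitting obtained by adding and subtracting $f^1(s,Y^2_s,Z^1_s)$ and $f^1(s,Y^2_s,Z^2_s)$. On $\{\Delta Y_s > 0\}$, assumption \eqref{ineq:monoton} bounds the first piece by $\mu((\Delta Y_s)^+)^2$; \eqref{ineq:lip_Z} combined with Young's inequality bounds the second by $\alpha((\Delta Y_s)^+)^2 + \frac{K_f}{4\alpha}\ind_{\{\Delta Y_s > 0\}}|\Delta Z_s|^2$ for any $\alpha>0$; and the third piece $f^1(s,Y^2_s,Z^2_s) - f^2(s,Y^2_s,Z^2_s)$ is nonpositive by hypothesis. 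For the $g$-term, \eqref{eq:property_Lip_g} yields $\ind_{\{\Delta Y_s > 0\}}|\Delta g_s|^2 \leq K_g((\Delta Y_s)^+)^2 + \eps\ind_{\{\Delta Y_s > 0\}}|\Delta Z_s|^2$. Choosing $\alpha = K_f/(1-\eps)$ makes the total $|\Delta Z|^2$ coefficient equal to $(1+\eps)/2 < 1$, which, after taking expectations, is absorbed by the term on the left. Gronwall then forces $\E[((\Delta Y_t)^+)^2] \equiv 0$, and trajectory continuity upgrades this to $Y^1 \leq Y^2$ a.s.\ simultaneously on $[0,T]$. The alternative hypothesis (where $f^2$ rather than $f^1$ is monotone) is treated symmetrically by splitting the driver difference around $(Y^1_s,Z^1_s)$ instead of $(Y^2_s,Z^2_s)$.

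The one genuinely subtle point, absent in the classical BSDE comparison, is that the quadratic variation of the backward integral produces a $|\Delta g|^2$ contribution on the \emph{same} side of the It\^o identity as the driver, so that the Lipschitz estimate \eqref{eq:property_Lip_g} injects a $|\Delta Z|^2$ remainder that competes with the $|\Delta Z|^2$ arising from the $W$-martingale. The absorption above is possible only thanks to the strict contraction $\eps < 1$ built into \eqref{eq:property_Lip_g}; this is the one place where the BDSDE structure, and not merely Lipschitz-ness of $g$, is essential.
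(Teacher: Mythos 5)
Your argument is correct and follows essentially the same route as the paper: Itô's formula applied to $((Y^1-Y^2)^+)^2$, the monotonicity/Lipschitz splitting of the driver difference around $(Y^2,Z^2)$, absorption of the $|\Delta Z|^2$ terms thanks to $\eps<1$ in \eqref{eq:property_Lip_g}, and Gronwall. The only difference is cosmetic: the paper delegates the absorption and Gronwall step to Lin--Wu \cite{lin:wu:11}, whereas you write it out explicitly, including the correct sign of the backward-integral quadratic variation term.
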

\begin{proof}
The proof is almost the same as Lemma 3.1 in \cite{lin:wu:11}. We define 
$$( \hat Y_t , \hat Z_t ) = ( Y^1_t - Y^2_t , Z^1_t - Z^2_t) , \quad \xi = \xi^1-\xi^2 ,$$
then $( \hat Y_t , \hat Z_t )$ satisfies the following BDSDE: for all $t \in [0, T ]$,
\begin{eqnarray*}
\hat Y_{t} & = & \hat \xi + \int_{t}^{T} \left[ f^1 \left( r,Y^1_{r},Z^1_r \right) - f^2 \left( r,Y^2_{r},Z^2_r \right) \right] dr \\
& & + \int_t^T \left[ g \left( r,Y^1_{r},Z^1_{r} \right) -  g \left( r,Y^2_{r},Z^2_{r} \right) \right]\overleftarrow{dB_r} - \int_{t}^{T} \hat Z_{r}dW_{r}.
\end{eqnarray*}
We apply It\^o's formula to $(\hat Y_t^+)^2$:
\begin{eqnarray*}
&& (\hat Y_t^+)^2 + \int_t^T\ind_{\hat Y_r > 0} |\hat Z_r|^2 dr \leq (\hat \xi^+)^2 +2 \int_t^T \hat Y_r^+\left[ f^1 \left( r,Y^1_{r},Z^1_r \right) - f^2 \left( r,Y^2_{r},Z^2_r \right) \right] dr \\
&&\qquad +   2 \int_t^T \hat Y_r^+\left[ g \left( r,Y^1_{r},Z^1_{r} \right) -  g \left( r,Y^2_{r},Z^2_{r} \right) \right]\overleftarrow{dB_r} \\
&&\qquad - 2 \int_t^T \hat Y_r^+ \hat Z_{r}dW_{r} + \int_t^T \ind_{\hat Y_r > 0} |g(r, Y_r^1, Z_r^1) - g(r, Y_r^2, Z_r^2)|^2dr.
\end{eqnarray*}
Now from \eqref{ineq:monoton} and \eqref{ineq:lip_Z}
\begin{eqnarray*}
&& \hat Y_r^+\left[ f^1 \left( r,Y^1_{r},Z^1_r \right) - f^2 \left( r,Y^2_{r},Z^2_r \right) \right] \\
&& = \hat Y_r^+\left[ f^1 \left( r,Y^1_{r},Z^1_r \right) - f^1 \left( r,Y^2_{r},Z^2_r \right) \right] + \hat Y_r^+\left[ f^1 \left( r,Y^2_{r},Z^2_r \right) - f^2 \left( r,Y^2_{r},Z^2_r \right) \right] \\
&& \leq \mu (\hat Y_r^+)^2 + K_f \hat Y_r^+|\hat Z_r|.
\end{eqnarray*}
The rest of the proof is exactly the same as in \cite{lin:wu:11}. Using Gronwall's Lemma, we deduce that $\E (\hat Y_t^+)^2 = 0$ for any $t\in[0,T]$.
\end{proof}

\subsection{SPDE with monotone generator}

In this section we want to extend the results of Bally and Matoussi \cite{ball:mato:01} (more precisely Theorem 3.1) to the monotone case. We will use the same notations as in \cite{ball:mato:01}. For all $(t,x) \in [0,T] \times \R^{d}$, we denote by $X^{t,x}$ as the solution of the SDE \eqref{eq:eds2} with $b \in C^2_b$ and $\sigma \in C^3_b$. We assume that Conditions (A) and \eqref{eq:cond_L_2p_SPDE} hold. $(Y^{t,x},Z^{t,x})$ is the unique solution of the BDSDE in $\cE^{2p}(0,T)$. We define 
$$u(t,x) = Y^{t,x}_t, \quad v(t,x) = Z^{t,x}_t.$$
We recall the definition of a weak solution. 
\begin{defin} \label{def:weak_sol_SPDE}
$u$ is a weak solution of the SPDE \eqref{eq:SPDE_gene} if the following conditions are satisfied.
\begin{enumerate}
\item For some $\delta > 0$
\begin{equation} \label{eq:weak_sol_int_cond} 
\sup_{s\leq T} \E\left[ ||u(s,.)||^{1+\delta}_{L^2_{\rho^{-1}}(\R^d)}\right] < \infty.
\end{equation}
\item For every test-function $\phi \in C^{\infty}(\R^d)$, $dt \otimes d\prb$ a.e.
\begin{equation} \label{eq:weak_sol_cont_cond} 
 \lim_{s\uparrow t} \int_{\R^d}u(s,x)\phi(x)dx=\int_{\R^d}u(t,x)\phi(x)dx.
\end{equation}
\item Finally $u$ satisfies for every function $\Psi \in C^{1,\infty}_c([0,T] \times \R^d;\R)$ 
\begin{eqnarray}\label{eq:weak_form_spde} 
& &\int_t^T\int_{\R^d}u(s,x)\partial_s\Psi(s,x)dxds+\int_{\R^d}u(t,x)\Psi(t,x) dx -\int_{\R^d}h(x)\Psi(T,x)dx\\ \nonumber
&&\qquad - \frac{1}{2}\int_t^T\int_{\R^d}(\sigma^*\nabla u)(s,x)(\sigma^*\nabla\Psi)(s,x)dx ds\\ \nonumber
&&\qquad  -\int_t^T\int_{\R^d}u(s,x) \diver\left(\left(b-\widetilde{A}\right)\Psi\right)(s,x)dx ds\\ \nonumber
&&= \int_t^T\int_{\R^d}\Psi(s,x)f(s,x,u(s,x),(\sigma^* \nabla u)(s,x)) dx ds\\ \nonumber
&&\qquad +\int_t^T\int_{\R^d} \Psi(s,x) g(s,x,u(s,x),(\sigma^*\nabla u)(s,x)) dx \overleftarrow{dB_s} .
\end{eqnarray}
\end{enumerate}
Here 
$$\widetilde A_i = \frac{1}{2} \sum_{j=1}^d \frac{\partial (\sigma\sigma^*)_{j,i}}{\partial x_j}.$$
\end{defin}

To prove Proposition \ref{prop:existence_sol_monotone_SPDE}, we can directly sketch the proof of Theorem 3.1 in \cite{ball:mato:01} step by step. Using the equivalence of norms we have:
\begin{eqnarray*}
&& \int_{\R^d} \int_0^T \E |u(s,x)|^{2p} \rho^{-1}(x) dx ds \leq C  \int_{\R^d} \int_0^T \E |u(s,X^{t,x}_s)|^{2p} \rho^{-1}(x) dx ds \\
&& = C  \int_{\R^d} \int_0^T \E |Y^{t,x}_s|^{2p} \rho^{-1}(x) dx ds \\
&& \leq C  \int_{\R^d} \left( \E |h(X^{t,x}_T)|^{2p} + \E \int_0^T \left( |f(s,X^{t,x}_s,0,0)|^{2p} + |g(s,X^{t,x}_s,0,0)|^{2p} \right) ds\right) \rho^{-1}(x) dx \\
&& \leq C \int_{\R^d}  \left[  |h(x)|^{2p}  + \int_0^T\left( |f(t,x,0,0)|^{2p} + |g(t,x,0,0)|^{2p} \right) dt \right] \rho^{-1}(x) dx.
\end{eqnarray*}
Then we define $H=h(x)dx$, 
$$F_s = f(s,x,u(s,x),v(s,x)),\quad G_s = g(s,x,u(s,x),v(s,x)).$$
From the Assumptions \eqref{ineq:lip_Z}, \eqref{eq:property_growth_f}, \eqref{eq:property_Lip_g}, \eqref{eq:cond_L_2p_SPDE} and \eqref{eq:2p_estim}, $H$, $F_s$ and $G_s$ are in $\cH_{0,\rho}'$ (see \cite{ball:mato:01} for a precise definition). Then we can use Theorem 2.1 in \cite{ball:mato:01}: $v=\sigma^* \nabla u$ and $u$ solves the linear SPDE associated to $H$, $F_s$ and $G_s$ (see Equation (16) in \cite{ball:mato:01}):
\begin{eqnarray*}
&& \int_t^T (u_s,\partial_s \phi(s,.)) ds - (H,\phi(T,.))+ (u_t,\phi(t,.)) - \int_t^T (u_s,\ope^*\phi(s,.))ds \\
&& \qquad = \int_t^T (F_s,\phi(s,.)) ds + \int_t^T (H_s,\phi(s,.)) \overleftarrow{dB_s}. 
\end{eqnarray*} 
$\ope^*$ is the adjoint of $\ope$. Thus $u$ is a weak solution of \eqref{eq:SPDE_gene}. Uniqueness can be proved exactly as in \cite{ball:mato:01}.

\section{Singular terminal condition, construction of a minimal solution} \label{sect:exist_min_sol}

From now on we assume that the terminal condition $\xi$ satisfies the property \eqref{cond_infinity}:
$$\prb(\xi \geq 0)=1 \quad \mbox{and} \quad \prb(\xi = + \infty) > 0.$$ 
For $q > 0$, let us consider the function $f: \R \to \R$, defined by $f(y)=-y|y|^{q}$. $f$ is continuous and monotone, i.e. satisfies Condition \eqref{ineq:monoton} with $\mu=0$: for all $(y,y') \in \R^{2}$:
$$(y-y')( f(y) - f(y') ) \leq 0.$$
Condition \eqref{eq:property_growth_f} is also satisfied with $p=q+1$. We also consider a function $g : [0,T]\times \Omega \times \R \times \R^d \to \R$ and we assume that Condition \eqref{eq:property_Lip_g} holds. 

\subsection{Approximation}

For every $n \in \N^{*}$, we introduce $\xi_{n} = \xi \wedge n$. $\xi_{n}$ belongs to $L^{2} \left( \Omega, \tri_{T}, \prb ; \R \right)$. We apply Theorem \ref{thm:existence_sol} with $\xi_{n}$ as the final data, and we build a sequence of random processes $(Y^{n},Z^{n}) \in \cE^2(0,T)$ which satisfy \eqref{eq:sing_BDSDE}. 

From Proposition \ref{prop:comp_result}, if $n \leq m$, $0 \leq \xi_{n} \leq \xi_{m} \leq m$, which implies for all $t$ in $[0,T]$, a.s., 
\begin{equation} \label{eq:bound_Y}
\Xi^0_t \leq Y^{n}_{t} \leq Y^{m}_{t} \leq \Xi^m_t .
\end{equation}
Here $\Xi^k$ is the first component of the unique solution $(\Xi^k,\Theta^k)$ in $\cE^2(0,T)$ of \eqref{eq:sing_BDSDE} with a deterministic terminal condition $k$. In order to have explicit and useful bound on $Y^m$ we will assume that $g(t,y,0)=0$ for any $(t,y)$ a.s. In this case for $m\geq 1$,
$$\Xi^0_t = 0, \qquad \Xi^m_t = \left( \frac{1}{q(T-t)+\frac{1}{m^{q}}} \right)^{\frac{1}{q}}.$$

We define the progressively measurable $\R$-valued process $Y$, as the increasing limit of the sequence $(Y^{n}_{t})_{n \geq 1}$:
\begin{equation} \label{eq:def_sol}
\forall t \in [0,T], \ Y_{t} = \lim_{n \rightarrow + \infty} Y^{n}_{t}.
\end{equation}
Then we obtain 
\begin{equation} \label{ineq:estimapriori}
\forall \ 0 \leq t \leq T, \quad 0 \leq Y_{t} \leq \left( \frac{1}{q(T-t)} \right)^{\frac{1}{q}}.
\end{equation}
In particular $Y$ is finite on the interval $[0,T[$ and bounded on $[0,T-\delta]$ for all $\delta > 0$. 

Here we will prove the first part of Theorem \ref{thm:exist_min_sol}, that is $(Y,Z)$ satisfies properties (D1) and (D2) of the definition \ref{definsolution}. Moreover we will obtain that there exists a constant $\kappa$, depending on $g$, s.t. for all $t \in [0,T[$,
\begin{equation} \label{ineq:estim_Z} 
\E \int_{0}^{t} | Z_{r} |^{2} dr \leq \frac{\kappa}{\left(q(T-t)\right)^{\frac{2}{q}}}, 
\end{equation}
\begin{proof}
Let $\delta > 0$ and $s \in [0,T-\delta]$. For all $0 \leq t \leq s$, It\^{o}'s formula leads to the equality:
\begin{eqnarray} \nonumber
&& | Y^{n}_{t} - Y^{m}_{t}|^{2} + \int_{t}^{s} |Z^{n}_{r}-Z^{m}_{r} |^{2} dr = | Y^{n}_{s} - Y^{m}_{s}|^{2} - 2 \int_{t}^{s} (Y^{n}_{r} -Y^{m}_{r})(Z^{n}_{r}-Z^{m}_{r})dW_{r} \\ \nonumber 
& &\qquad  + 2 \int_{t}^{s} (Y^{n}_{r} -Y^{m}_{r}) \left( f(Y^{n}_{r})-f(Y^{m}_{r}) \right) dr \\\nonumber
&& \qquad  + 2 \int_t^s (Y^{n}_{r} -Y^{m}_{r})(g(r,Y^n_r,Z^{n}_{r})-g(r,Y^m_r,Z^{m}_{r})) \overleftarrow{dB_r} \\ \nonumber
&& \qquad + \int_t^s |g(r,Y^n_r,Z^{n}_{r})-g(r,Y^m_r,Z^{m}_{r})|^2 dr \\ \nonumber
&& \quad  \leq | Y^{n}_{s}- Y^{m}_{s}|^{2} + K \int_t^s |Y^n_r - Y^m_r|^2 dr   + \eps \int_t^s |Z^n_r - Z^m_r|^2 dr \\ \nonumber
&& \qquad  - 2 \int_{t}^{s} (Y^{n}_{r} -Y^{m}_{r})(Z^{n}_{r}-Z^{m}_{r})dW_{r} \\ \nonumber
&& \qquad + 2 \int_t^s (Y^{n}_{r} -Y^{m}_{r})(g(r,Y^n_r,Z^{n}_{r})-g(r,Y^m_r,Z^{m}_{r})) \overleftarrow{dB_r} 
\end{eqnarray}
from the monotonicity of $f$ (Inequality \eqref{ineq:monoton}) and the Lipschtiz property of $g$ (Inequality \eqref{eq:property_Lip_g}).  From the properties \eqref{eq:property_Lip_g} and since $(Y,Z) \in \cE^2$, we have:
$$\E \left( \int_{t}^{s} (Y^{n}_{r} - Y^{m}_{r})(Z^{n}_{r}-Z^{m}_{r})dW_{r} \right) = 0,$$
$$\E \left( \int_{t}^{s} (Y^{n}_{r} -Y^{m}_{r})(g(r,Y^n_r,Z^{n}_{r})-g(r,Y^m_r,Z^{m}_{r})) \overleftarrow{dB_r} \right) = 0.$$
From the Burkholder-Davis-Gundy inequality, we deduce the existence of a universal constant $C$ with:
\begin{equation} \label{ineq:conv}
\E \left( \sup_{0\leq t \leq s} | Y^{n}_{t} - Y^{m}_{t}|^{2} + \int_{0}^{s} |Z^{n}_{r}-Z^{m}_{r} |^{2} dr \right) \leq C \ \E \left( | Y^{n}_{s} - Y^{m}_{s}|^{2} \right).
\end{equation}
From the estimate \eqref{ineq:estimapriori}, for $s \leq T-\delta$, $Y^{n}_{s} \leq \frac{1}{(q \delta)^{1/q}}$ and $Y_{s} \leq \frac{1}{(q \delta)^{1/q}}$.
Since $Y^{n}_{s}$ converges to $Y_{s}$ a.s., the dominated convergence theorem and the previous inequality 
\eqref{ineq:conv} imply:
\begin{enumerate}
\item for all $\delta > 0$, $(Z^{n})_{n \geq 1}$ is a Cauchy sequence
in $L^{2}(\Omega \times [0,T-\delta];\R^{d})$, and converges to $Z \in L^{2}(\Omega \times [0,T-\delta];\R^{d})$, 
\item $(Y^{n})_{n \geq 1}$ converges to
$Y$ uniformly in mean-square on the interval $[0,T-\delta]$, in particular $Y$ is continuous on $[0,T)$,
\item $(Y,Z)$ satisfies Equation \eqref{eq:sing_BDSDE} on $[0,T)$. 
\end{enumerate}
Since $Y_{t}$ is smaller than $1/(q(T-t))^{1/q}$ by \eqref{ineq:estimapriori}, and since $Z \in L^{2}(\Omega \times [0,T-\delta];\R^{d})$, applying the It\^{o} formula to $|Y|^{2}$, with $s < T$ and $0 \leq t \leq s$, we obtain:
\begin{eqnarray*} 
|Y_{t}|^{2} + \int_{t}^{s} |Z_{r} |^{2} dr  & = &|Y_{s}|^{2} - 2 \int_{t}^{s} Y_{r}Z_{r}dW_{r} + 2 \int_{t}^{s} Y_{r}  f(Y_{r}) dr \\  
&& + 2 \int_{t}^{s} Y_{r}g(r,Y_r,Z_{r}) \overleftarrow{dB_{r}} + \int_t^s |g(r,Y_r,Z_r) |^2 dr \\
& \leq & \frac{1}{\left( q(T-s) \right)^{\frac{2}{q}}} - 2 \int_{t}^{s} Y_{r} Z_{r} dW_{r}+ 2 \int_{t}^{s} Y_{r}g(r,Y_r,Z_{r}) \overleftarrow{dB_{r}} \\
&& + K \int_t^s |Y_r |^2 dr + \eps \int_t^s |Z_r |^2 dr,
\end{eqnarray*}
again thanks to Inequalities \eqref{ineq:monoton} and \eqref{eq:property_Lip_g}. From \eqref{ineq:estimapriori}, since $Z \in L^{2}([0,s] \times \Omega)$, we have: 
$$ \E \int_{t}^{s} Y_{r} Z_{r} dW_{r} = \E  \int_{t}^{s} Y_{r}g(r,Y_r,Z_{r}) \overleftarrow{dB_{r}} = 0.$$
Therefore, we deduce that there exists a constant $\kappa$ depending on $T$, $K$ and $\eps$ such that : 
\begin{equation*} 
\E \int_{0}^{s} | Z_{r} | ^{2} dr \leq \frac{\kappa}{\left(q(T-s)\right)^{\frac{2}{q}}}.
\end{equation*}
\end{proof}

Remark that if $g$ is equal to zero, $\kappa$ is equal to one. And in general 
$$\kappa = \frac{1}{1-\eps}(1 + K T).$$ 
We want to establish the following statement which completes Inequality \eqref{ineq:estim_Z}.
\begin{prop} \label{prop:sharp_estim_Z}
The next inequality is a sharper estimation on $Z$:
\begin{equation} \label{ineq:sharp_estim_Z}
\E \int_{0}^{T} (T-s)^{2/q} | Z_{s} |^{2} ds \leq \frac{8+KT}{1-\eps} \left( \frac{1}{q} \right)^{2/q}.
\end{equation}
The constants $K$ and $\eps$ are given by the assumption \eqref{eq:property_Lip_g}.
\end{prop}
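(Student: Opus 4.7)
Work with the approximating sequence $(Y^n,Z^n)$ from the construction in Section~\ref{sect:exist_min_sol} (where $\xi_n = \xi\wedge n$ is bounded), establish the bound uniformly in $n$, and pass to the limit by Fatou. The natural test function is $\phi(t)=(T-t)^{2/q}$, chosen so that $\phi(T)=0$: this is what kills the boundary contribution at time $T$. Apply the It\^o formula for BDSDEs (Lemma~1.3 in \cite{pard:peng:94}) to $\phi(t)|Y^n_t|^2$ on $[0,T]$ and take expectation; the two stochastic integrals are genuine martingales since $\phi(r)Y^n_r$ is bounded and $Z^n,\,g(\cdot,Y^n,Z^n)\in L^2$, so they vanish, giving
$$\phi(0)\E|Y^n_0|^2+\E\int_0^T\phi(r)|Z^n_r|^2dr+2\E\int_0^T\phi(r)(Y^n_r)^{q+2}dr=\int_0^T(-\phi'(r))\E|Y^n_r|^2dr+\E\int_0^T\phi(r)|g(r,Y^n_r,Z^n_r)|^2dr.$$

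\textbf{Standard controls.} The assumption $g(t,y,0)=0$ combined with \eqref{eq:property_Lip_g} yields $|g(r,Y^n_r,Z^n_r)|^2\leq K|Y^n_r|^2+\eps|Z^n_r|^2$, and the $\eps$-piece is absorbed into the left-hand side, producing the announced factor $(1-\eps)$. The pointwise a priori bound \eqref{ineq:estimapriori}, $|Y^n_r|\leq (q(T-r))^{-1/q}$, gives $\phi(r)|Y^n_r|^2\leq q^{-2/q}$, so the resulting $K$-term contributes at most $KTq^{-2/q}$.

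\textbf{Main obstacle.} The delicate term is $\int_0^T(-\phi'(r))\E|Y^n_r|^2dr=(2/q)\int_0^T(T-r)^{2/q-1}\E|Y^n_r|^2dr$: a direct use of the a priori bound introduces the non-integrable factor $(T-r)^{-1}$, so the singularity must be absorbed by the positive left-hand side term $2\E\int_0^T\phi(r)(Y^n_r)^{q+2}dr$. The key observation is that the deterministic barrier $\Xi^n_r=(q(T-r)+n^{-q})^{-1/q}$, which dominates $Y^n_r$ by the comparison principle of Proposition~\ref{prop:comp_result} and satisfies the ODE $d\Xi^n/dr=(\Xi^n)^{q+1}$, obeys the chain-rule identity
$$\int_0^T(-\phi'(r))(\Xi^n_r)^2dr=T^{2/q}(\Xi^n_0)^2+2\int_0^T\phi(r)(\Xi^n_r)^{q+2}dr,$$
with $T^{2/q}(\Xi^n_0)^2\leq q^{-2/q}$ uniformly in $n$. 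Since $\E|Y^n_r|^2\leq(\Xi^n_r)^2$ and $\E(Y^n_r)^{q+2}\leq(\Xi^n_r)^{q+2}$, the logarithmically divergent pieces cancel between the two sides and one is left with a uniform-in-$n$ constant of the form $(C+KT)q^{-2/q}/(1-\eps)$. The specific value $C=8$ in the proposition presumably reflects a non-optimal Young's inequality or a BDG estimate on a martingale-supremum that is applied during this cancellation (a tighter bookkeeping would give the smaller constant $C=1$).

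\textbf{Passage to the limit.} Once the uniform bound on $\E\int_0^T\phi(r)|Z^n_r|^2dr$ is obtained, the $L^2_{\mathrm{loc}}$ convergence $Z^n\to Z$ established in Section~\ref{sect:exist_min_sol} together with Fatou's lemma on $[0,T]$ transfers the bound to the limit $(Y,Z)$, yielding \eqref{ineq:sharp_estim_Z}.
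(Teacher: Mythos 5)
Your It\^o identity for $(T-t)^{2/q}|Y^n_t|^2$ is correct, but the step you call the key observation contains a genuine gap, and it is exactly where the difficulty of the proposition sits. From your identity, writing $\phi(r)=(T-r)^{2/q}$,
\begin{equation*}
(1-\eps)\,\E\int_0^T\phi(r)|Z^n_r|^2\,dr\ \le\ D_n+KT\,q^{-2/q},\qquad
D_n:=\int_0^T(-\phi'(r))\,\E|Y^n_r|^2\,dr-2\,\E\int_0^T\phi(r)(Y^n_r)^{q+2}\,dr,
\end{equation*}
so everything hinges on a bound for $D_n$ uniform in $n$. Your barrier argument bounds the first piece of $D_n$ by $T^{2/q}(\Xi^n_0)^2+2\int_0^T\phi(r)(\Xi^n_r)^{q+2}dr$, and you then invoke $\E(Y^n_r)^{q+2}\le(\Xi^n_r)^{q+2}$ to claim a cancellation. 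That inequality goes the wrong way: after subtracting the left-hand term you are left with $2\int_0^T\phi(r)\bigl[(\Xi^n_r)^{q+2}-\E(Y^n_r)^{q+2}\bigr]dr$, which you need to bound \emph{from above}, and which in general diverges like $\log n$. For instance with $g=0$ and $\xi=+\infty\,\ind_{\{W_T>0\}}$, the process $Y^n$ stays far below the barrier near $T$ on $\{W_T\le0\}$, so the bracket is of order $(\Xi^n_r)^{q+2}\sim(q(T-r)+n^{-q})^{-(q+2)/q}$ on a set of probability $1/2$ and its $\phi$-weighted integral blows up logarithmically in $n$. The failure is structural, not a bookkeeping issue: since $Y^n_r\le(q(T-r))^{-1/q}$ by \eqref{ineq:estimapriori}, one has pointwise $\frac2q(T-r)^{2/q-1}(Y^n_r)^2\ge2(T-r)^{2/q}(Y^n_r)^{q+2}$, so the nonlinear drift can never absorb the singular term coming from $\phi'$ (and no Young/H\"older interpolation helps, the exponents reproduce the non-integrable weight $(T-r)^{-1}$); in fact your own identity gives $D_n=\phi(0)\E|Y^n_0|^2+\E\int_0^T\phi(r)|Z^n_r|^2dr-\E\int_0^T\phi(r)|g(r,Y^n_r,Z^n_r)|^2dr$, so bounding $D_n$ is essentially equivalent to the estimate you are trying to prove --- the argument is circular.

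The paper's proof takes a different route that circumvents precisely this. It first reduces to terminal conditions bounded away from zero (in the general case via the secondary approximation $\xi^{n,m}=(\xi\wedge n)\vee\frac1m$, with the error controlled by $\E|\widetilde Y^{n,m}_t-Y^n_t|^2\le e^{(1+K)T}/m^2$ and an It\^o computation on $(T-\cdot)^{2/q}|\widetilde Y^{n,m}-Y^n|^2$), and then applies It\^o's formula to the \emph{concave} functional $(T-t)^{1/(2q)}\sqrt{Y^n_t}$. Concavity is the point: the second-order term produces $-\frac18(T-s)^{1/(2q)}(Y^n_s)^{-3/2}\bigl(|Z^n_s|^2-g(s,Y^n_s,Z^n_s)^2\bigr)$, which can be isolated with the right sign, while the first-order drift term $\frac12(T-s)^{1/(2q)}(Y^n_s)^{1/2}\bigl((Y^n_s)^{q}-\frac1{q(T-s)}\bigr)$ is non-positive precisely because of the a priori bound \eqref{ineq:estimapriori}; this is how the singular weight is neutralized. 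The weight $(T-s)^{2/q}$ is then recovered from $(Y^n_s)^{-3/2}\ge(q(T-s))^{3/(2q)}$, and the constant $8$ in \eqref{ineq:sharp_estim_Z} is exactly the reciprocal of the $\frac18$ in the It\^o correction of the square root, not a Young or BDG artefact of your scheme. Your final Fatou passage would be fine, but only once a uniform-in-$n$ bound is actually in hand.
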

\begin{proof}
First suppose there exists a constant $\alpha > 0$ such that $\prb$-a.s. $\xi \geq \alpha$. In this case, by comparison, for all integer $n$ and all $t \in [0,T]$:
$$Y^{n}_{t} \geq \left( \frac{1}{qT+1/\al^{q}} \right)^{1/q} > 0.$$
Let $\delta > 0$ and $\theta : \R \rightarrow \R$, $\theta_{q} : \R \rightarrow \R$ defined by:
\begin{displaymath}
\left\{ \begin{array}{lcll}
\theta(x) & = & \sqrt{x} & \mbox{on} \ [\delta,+\infty[,\\
\theta(x) & = & 0 & \mbox{on} \ ]-\infty,0],\\
\end{array}
\right.
\quad \mbox{and} \quad
\left\{ \begin{array}{lcll}
\theta_{q}(x) & = & x^{\frac{1}{2q}} & \mbox{on} \ [\delta,+\infty[,\\
\theta_{q}(x) & = & 0 & \mbox{on} \ ]-\infty,0],\\
\end{array}
\right.
\end{displaymath}
and such that $\theta$ and $\theta_{q}$ are non-negative, non-decreasing and in respectively $C^{2}(\R)$ and $C^{1}(\R)$. We apply the It\^{o} formula on $[0,T-\delta]$ to the function $\theta_{q}(T-t) \theta (Y^{n}_{t})$, with $\delta < (qT+1/\al^{q})^{- 1/q}$:
\begin{eqnarray*}
\theta_{q}(\delta) \theta (Y^{n}_{T-\delta}) - \theta_{q}(T) \theta (Y^{n}_{0}) & = & \frac{1}{2} \int_{0}^{T-\delta} (T-s)^{1/2q} (Y^{n}_{s})^{1/2} \left( (Y^{n}_{s})^{q} - \frac{1}{q(T-s)} \right) ds \\
& + & \frac{1}{2} \int_{0}^{T-\delta} (T-s)^{1/2q} (Y^{n}_{s})^{-1/2} Z^{n}_{s}dW_{s} \\
& - & \frac{1}{2} \int_{0}^{T-\delta} (T-s)^{1/2q} (Y^{n}_{s})^{-1/2} g(s,Y^n_s,Z^{n}_{s})\overleftarrow{dB_{s}}  \\
& - & \frac{1}{8} \int_{0}^{T-\delta} (T-s)^{1/2q} \frac{|Z^{n}_{s}|^{2} - g(s,Y^n_s,Z^{n}_{s})^2}{(Y^{n}_{s})^{3/2}} ds.
\end{eqnarray*}
If we define
$$\Psi^n_s = \frac{|Z^{n}_{s}|^{2} - g(s,Y^n_s,Z^{n}_{s})^2}{(Y^{n}_{s})^{3/2}},$$
we have
\begin{eqnarray*}
\frac{1}{8} \int_{0}^{T-\delta} (T-s)^{1/2q} \Psi^n_s ds & \leq & T^{1/2q} \theta (Y^{n}_{0}) + \frac{1}{2} \int_{0}^{T-\delta} (T-s)^{1/2q} (Y^{n}_{s})^{-1/2} Z^{n}_{s}dW_{s} \\
& & +  \frac{1}{2} \int_{0}^{T-\delta} (T-s)^{1/2q} (Y^{n}_{s})^{1/2} \left( (Y^{n}_{s})^{q} -\frac{1}{q(T-s)} \right) ds \\
&& - \frac{1}{2} \int_{0}^{T-\delta} (T-s)^{1/2q} (Y^{n}_{s})^{-1/2} g(s,Y^n_s,Z^{n}_{s})\overleftarrow{dB_{s}} 
\end{eqnarray*}
and since $Y^{n}_{s} \leq 1/(q(T-s))^{1/q}$ and $T^{1/q}Y_{0}^{n} \leq q^{-1/q}$, taking the expectation we obtain:
\begin{equation*}
\frac{1}{8}  \E \int_{0}^{T-\delta} (T-s)^{1/2q} \Psi^n_s ds  \leq  \theta_{q}(T) \theta (Y^{n}_{0}) \leq (1/q)^{1/2q},
\end{equation*}
that is for all $n$ and all $\delta > 0$ :
\begin{equation*}
\E \int_{0}^{T-\delta} (T-s)^{1/2q} \Psi^n_s ds  \leq  8 (1/q)^{1/2q}.
\end{equation*}
Using the assumption \eqref{eq:property_Lip_g} on $g$, we have
$$ (1-\eps) \frac{|Z^{n}_{s}|^{2} }{(Y^{n}_{s})^{3/2}} \leq \Psi^n_s + K (Y^n_s)^{1/2},$$
which implies 
\begin{eqnarray*}
&& (1-\eps) \E \int_{0}^{T-\delta} (T-s)^{1/2q}   \frac{|Z^{n}_{s}|^{2} }{(Y^{n}_{s})^{3/2}} ds  \leq  8 (1/q)^{1/2q} + K \E \int_{0}^{T-\delta} (T-s)^{1/2q} (Y^n_s)^{1/2}ds \\
&& \qquad  \leq  8 (1/q)^{1/2q} + K \E \int_{0}^{T-\delta}  (T-s)^{1/2q} \left( \frac{1}{q(T-s)}\right)^{1/(2q)} ds \leq (1/q)^{1/2q} (8 + KT). 
\end{eqnarray*}
Now, since $1/Y^{n}_{s} \geq (q(T-s))^{1/q}$, letting $\delta \to 0$ and with the Fatou lemma, we deduce that
$$\E \int_{0}^{T} (T-s)^{2/q} |Z_{s}|^{2} ds  \leq  \frac{8+KT}{1-\eps} (1/q)^{2/q}.$$

Now we come back to the case $\xi \geq 0$. We can not apply the It\^{o} formula because we do not have any positive lower bound for $Y^{n}$. We will approach $Y^{n}$ in the following way. We define for $n \geq 1$ and $m \geq 1$, $\xi^{n,m}$ by:
$$\xi^{n,m} = \left( \xi \wedge n \right) \vee \frac{1}{m}.$$
This random variable is in $L^{2}$ and is greater or equal to $1/m$ a.s. The BSDE \eqref{eq:sing_BDSDE}, with $\xi^{n,m}$ as terminal condition, has a unique solution $(\widetilde{Y}^{n,m},\widetilde{Z}^{n,m})$. It is immediate that if $m \leq m'$ and $n \leq n'$ then:
$$\widetilde{Y}^{n,m'} \leq \widetilde{Y}^{n',m}.$$
As for the sequence $Y^{n}$, we can define $\widetilde{Y}^{m}$ as the limit when $n$ grows to $+\infty$ of $\widetilde{Y}^{n,m}$. This limit $\widetilde{Y}^{m}$ is greater than $Y = \lim_{n \rightarrow + \infty} Y^{n}$. But for any $m$ and $n$, for $t \in [0,T]$:
\begin{eqnarray} \nonumber
&& \left| \widetilde{Y}^{n,m}_{t} -Y^{n}_{t} \right|^2 = \left| \xi^{n,m} - \xi^{n} \right|^2 - 2  \int_{t}^{T} \left[ \widetilde{Y}^{n,m}_{r} -Y^{n}_{r} \right] \left[ \left( \widetilde{Y}^{n,m}_{r} \right)^{q+1} - \left( Y^{n}_{r} \right)^{q+1} \right] dr \\ \nonumber
& &\qquad -  2 \int_{t}^{T}\left[ \widetilde{Y}^{n,m}_{r} -Y^{n}_{r} \right]   \left[ \widetilde{Z}^{n,m}_{r} - Z^{n}_{r} \right] dW_{r} - \int_{t}^{T}  \left| \widetilde{Z}^{n,m}_{r} - Z^{n}_{r} \right|^2 dr \\ \nonumber
&& \qquad + 2 \int_t^T \left[ \widetilde{Y}^{n,m}_{r} -Y^{n}_{r} \right]  \left[  g(r,\widetilde{Y}^{n,m}_r,\widetilde{Z}^{n,m}_r) -g(r,Y^{n}_r,Z^{n}_r) \right] \overleftarrow{dB_{r}}  \\ \nonumber
&& \qquad + \int_t^T  \left[  g(r,\widetilde{Y}^{n,m}_r,\widetilde{Z}^{n,m}_r) -g(r,Y^{n}_r,Z^{n}_r) \right]^2 dr \\ \nonumber
& &\leq \left| \xi^{n,m} - \xi^{n} \right|^2 - 2 \int_{t}^{T}\left[ \widetilde{Y}^{n,m}_{r} -Y^{n}_{r} \right]   \left[ \widetilde{Z}^{n,m}_{r} - Z^{n}_{r} \right] dW_{r}  \\ \nonumber
&&\qquad + 2 \int_t^T \left[ \widetilde{Y}^{n,m}_{r} -Y^{n}_{r} \right]  \left[  g(r,\widetilde{Y}^{n,m}_r,\widetilde{Z}^{n,m}_r) -g(r,Y^{n}_r,Z^{n}_r) \right] \overleftarrow{dB_{r}} \\ \label{eq:Ito_formula}
&& \qquad -(1-\eps) \int_{t}^{T}  \left| \widetilde{Z}^{n,m}_{r} - Z^{n}_{r} \right|^2 dr + (1+K)  \int_{t}^{T}  \left[ \widetilde{Y}^{n,m}_{r} - Y^{n}_{r} \right]^2 dr
\end{eqnarray}
and taking the expectation:
\begin{equation*}
\E \left| \widetilde{Y}^{n,m}_{t} -Y^{n}_{t} \right|^2  \leq \E \left| \xi^{n,m} - \xi^{n} \right|^2 + (1+K)  \int_{t}^{T}  \E \left[ \widetilde{Y}^{n,m}_{r} - Y^{n}_{r} \right]^2 dr.
\end{equation*}
Gronwall lemma shows that for any $t\in [0,T]$:
\begin{equation} \label{eq:approx_inf_xi}
\E \left| \widetilde{Y}^{n,m}_{t} -Y^{n}_{t} \right|^2  \leq e^{(1+K)T} \E \left| \xi^{n,m} - \xi^{n} \right|^2 \leq e^{(1+K)T}  \frac{1}{m^2}.
\end{equation}

To conclude we fix $\delta > 0$ and we apply the It\^{o} formula to the process $\displaystyle (T-.)^{2/q} \left| \widetilde{Y}^{n,m} - Y^{n} \right|^{2}$. This leads to the inequality: 
\begin{eqnarray*}
(1-\eps) \E \int_{0}^{T-\delta} (T-r)^{2/q} \left| \widetilde{Z}^{n,m}_{r} - Z^{n}_{r} \right|^{2} dr & \leq & \frac{2}{q} \E \int_{0}^{T-\delta} (T-s)^{(2/q)-1} \left| \widetilde{Y}^{n,m}_{s} - Y^{n}_{s} \right|^{2} ds \\
& & + (\delta)^{2/q} \E \left| \widetilde{Y}^{n,m}_{T-\delta} - Y^{n}_{T-\delta} \right|^{2} \\
& & + K \E \int_{0}^{T-\delta} (T-r)^{2/q} \left( \widetilde{Y}^{n,m}_{r} - Y^{n}_{r} \right)^{2} dr. 
\end{eqnarray*}
Let $\delta$ go to 0 in the previous inequality. We can do that because $(T-.)^{(2/q) -1}$ is integrable on the interval $[0,T]$ and because of \eqref{eq:approx_inf_xi}. Finally we have
\begin{eqnarray*}
(1-\eps) \E \int_{0}^{T} (T-r)^{2/q} \left| \widetilde{Z}^{n,m}_{r} - Z^{n}_{r} \right|^{2} dr & \leq  &\frac{e^{(1+K)T} }{m^2}\left[ \frac{2}{q} \int_{0}^{T} (T-s)^{(2/q) -1} ds + K \int_{0}^{T} (T-s)^{(2/q)} ds \right] \\
 & = & \frac{T^{2/q}e^{(1+K)T} }{m^{2}} \left( 1 + \frac{KT}{1+2/q} \right).
\end{eqnarray*}
Therefore, for all $\eta > 0:$
\begin{eqnarray*}
\E \int_{0}^{T} (T-r)^{2/q} \left| Z^{n}_{r} \right|^{2} dr & \leq & (1+ \eta) \E \int_{0}^{T} (T-r)^{2/q} \left| \widetilde{Z}^{n,m}_{r} \right|^{2} dr \\
& + &  (1 + \frac{1}{\eta}) \E \int_{0}^{T} (T-r)^{2/q} \left| \widetilde{Z}^{n,m}_{r} - Z^{n}_{r} \right|^{2} dr \\
& \leq & (1+\eta) \frac{8+KT}{1-\eps} (1/q)^{2/q} + (1 + \frac{1}{\eta})\frac{T^{2/q}e^{(1+K)T} }{m^{2}(1-\eps)} \left( 1 + \frac{KT}{1+2/q} \right).
\end{eqnarray*}
We have applied the previous result to $\widetilde{Z}^{n,m}$. Now we let first $m$ go to $+ \infty$ and then $\eta$ go to 0,
we have:
\begin{equation*} 
\E \int_{0}^{T} (T-r)^{2/q} \left| Z^{n}_{r} \right|^{2} dr  \leq \frac{8+KT}{1-\eps} (1/q)^{2/q}.
\end{equation*}
The result follows by letting finally $n$ go to $\infty$ and this achieves the proof of the proposition.
\end{proof}

\subsection{Existence of a limit at time $T$}

From now, the process $Y$ is continuous on $[0,T[$ and we define $Y_{T} = \xi$. The main difficulty will be to prove the continuity at time $T$. It is easy to show that:
\begin{equation} \label{liminf}
\xi \leq \liminf_{t \to T} Y_{t}.
\end{equation}
Indeed, for all $n \geq 1$ and all $t \in [0,T]$, $Y^{n}_{t} \leq Y_{t}$, therefore:
$$\xi \wedge n = \liminf_{t \to T} Y^{n}_{t} \leq \liminf_{t \to T} Y_{t}.$$
Thus, $Y$ is lower semi-continuous on $[0,T]$ (this is clear since $Y$ is the supremum of continuous functions). But now we will show that $Y$ has a limit on the left at time $T$. We will distinguich the case when $\xi$ is greater than a positive constant from the case $\xi$ non-negative. This will complete the proof of Theorem \ref{thm:exist_min_sol}.

\subsubsection{The case $\xi$ bounded away from zero.}

We can show that $Y$ has a limit on the left at $T$ by using It\^{o}'s formula applied to the process $1/(Y^{n})^{q}$. Suppose there exists a real $\alpha > 0$ such that $\xi \geq \alpha > 0$, $\prb$-a.s. Then from Proposition \ref{prop:comp_result} (and since $g(t,y,0)=0$), for every $n \in \N^{*}$ and every $0 \leq t \leq T$:
$$n \geq Y^{n}_{t} \geq \left( \frac{1}{q(T-t) + 1/\alpha ^{q}} \right)^{1/q} \geq \left( \frac{1}{qT + 1/\alpha ^{q}}  \right)^{1/q} > 0.$$
By the It\^{o} formula 
\begin{eqnarray}\nonumber
\frac{1}{(Y^{n}_{t})^{q}} & =  & \frac{1}{(\xi \wedge n)^{q}} + q(T - t) - \frac{q(q+1)}{2} \int_{t}^{T} \frac{\|Z^{n}_{s} \|^{2}}{\left( Y^{n}_{s} \right)^{q+2}} ds + \int_{t}^{T} \frac{q Z^{n}_{s}}{\left( Y^{n}_{s} \right)^{q+1}} d W_{s} \\ \nonumber
&& - q \int_t^T \frac{g(s,Y^n_s,Z^n_s)}{(Y^n_s)^{1+q}} \overleftarrow{dB_{s}} + \frac{q(q+1)}{2}\int_t^T \frac{(g(s,Y^n_s,Z^n_s))^2}{(Y^n_s)^{2+q}}  ds \\ \label{eq:resol_BSDE}
& = & \E^{\tri_{t}} \left(\frac{1}{(\xi \wedge n)^{q} } \right) + q(T -t)  + \int_{t}^{T} \frac{q Z^{n}_{s}}{\left( Y^{n}_{s} \right)^{q+1}} d W_{s}  \\ \nonumber
&&  - q   \int_{t}^{T}  \frac{g(s,Y^n_s,Z^n_s)}{(Y^n_s)^{1+q}} \overleftarrow{dB_{s}} -\frac{q(q+1)}{2} \int_{t}^{T} \Psi^n_s ds ,
\end{eqnarray}
where 
$$\Psi^n_s = \frac{|Z^{n}_{s} |^{2} - (g(s,Y^n_s,Z^n_s))^2 }{\left( Y^{n}_{s} \right)^{q+2}} .$$
Or for any $t \in [0,T]$:
\begin{eqnarray*}
\E \frac{1}{(Y^{n}_{t})^{q}} & = & \E \left(\frac{1}{(\xi \wedge n)^{q} } \right) + q(T -t) - \frac{q(q+1)}{2} \E \int_{t}^{T} \Psi^n_s ds.
\end{eqnarray*}
This shows that 
\begin{equation} \label{eq:bound_L2}
\sup_{n\in \N}  \E \int_{0}^{T} \Psi^n_s ds < +\infty.
\end{equation}
From the assumption on $g$, we have
\begin{equation}\label{eq:estim_1}
(1-\eps) \frac{|Z^{n}_{s} |^{2} }{\left( Y^{n}_{s} \right)^{q+2}} - K  \frac{1 }{\left( Y^{n}_{s} \right)^{q}}\leq  \Psi^n_s \Rightarrow (1-\eps) \frac{|Z^{n}_{s} |^{2} }{\left( Y^{n}_{s} \right)^{q+2}} \leq   \Psi^n_s + K  \frac{1 }{\left( Y^{n}_{s} \right)^{q}} .
\end{equation}
Form this inequality and Inequality \eqref{eq:bound_L2} we deduce that 
\begin{equation*}
\sup_{n\in \N}  \E \int_{0}^{T} \left( \frac{|Z^{n}_{s} |^{2} }{\left( Y^{n}_{s} \right)^{q+2}} + \frac{(g(s,Y^n_s,Z^n_s))^2}{(Y^n_s)^{2+q}}  \right) ds < +\infty.
\end{equation*}
Hence the two sequences 
$$\int_{t}^{T} \frac{q Z^{n}_{s}}{\left( Y^{n}_{s} \right)^{q+1}} d W_{s} \ \mbox{and } \int_t^T \frac{g(s,Y^n_s,Z^n_s)}{(Y^n_s)^{1+q}} \overleftarrow{dB_{s}} $$
converge weakly in $L^2$ to some stochastic integrals (the proof is classical and uses Mazur's lemma (see \cite{yosi:80}, chapter V.1, Theorem 2,  for example)):
$$\int_{t}^{T} U_s d W_{s} \ \mbox{and } \int_t^T V_s \overleftarrow{dB_{s}}.$$

Now we decompose $\Psi^n$ as follows
$$\Psi^n_s = (\Psi^n_s)^+ - (\Psi^n_s)^-,$$
where $x^+$ (resp. $x^-$) denotes the positive (resp. negative) part of $x$. Again from Inequality \eqref{eq:estim_1} we deduce that
$$(1-\eps) \frac{|Z^{n}_{s} |^{2} }{\left( Y^{n}_{s} \right)^{q+2}} - K  \frac{1 }{\left( Y^{n}_{s} \right)^{q}}\leq  \Psi^n_s  = (\Psi^n_s)^+ - (\Psi^n_s)^- \leq \frac{|Z^{n}_{s} |^{2}  }{\left( Y^{n}_{s} \right)^{q+2}},$$
and therefore
$$0 \leq (\Psi^n_s)^-  \leq K  \frac{1 }{\left( Y^{n}_{s} \right)^{q}} \leq K(qT + \frac{1}{\al^q}).$$
Therefore for any $t \in [0,T]$:
\begin{equation}\label{eq:estim_2}
0 \leq \frac{q(q+1)}{2} \int_{t}^{T}(\Psi^n_s)^- ds \leq \frac{q(q+1)}{2}K(qT + \frac{1}{\al^q}) (T-t).
\end{equation}
If we define 
$$\Gamma_t = \liminf_{n\to + \infty} \frac{q(q+1)}{2} \E^{\cG_{t}} \int_{t}^{T}(\Psi^n_s)^- ds,$$
we obtain that $\Gamma$ is a non negative bounded process. Since $(\Psi^n)^-$ is non negative, it is straightforward that $\Gamma$ is a supermartingale. Moreover the dominated convergence theorem proves that $\Gamma$ is a continuous process such that: $\displaystyle \lim_{t \to T} \Gamma_t = 0.$

Coming back to\eqref{eq:resol_BSDE} and taking the conditional expectation, we have
\begin{eqnarray*}
\frac{q(q+1)}{2}\E^{\cG_t} \int_{t}^{T} \Psi^n_s ds & = & \E^{\cG_t}\left( \frac{1}{(\xi \wedge n)^{q} } \right) -  \frac{1}{(Y^{n}_{t})^{q}} + q(T -t)  \\ 
&&  + q  \E^{\cG_t} \int_{t}^{T}  \frac{g(s,Y^n_s,Z^n_s)}{(Y^n_s)^{1+q}} \overleftarrow{dB_{s}} ,
\end{eqnarray*}
and the right hand side converges weakly in $L^2$. Therefore if we define
$$\Theta_t =\limsup_{n\to +\infty} \frac{q(q+1)}{2}\E^{\cG_t} \int_{t}^{T} (\Psi^n_s)^+ ds,$$
taking the weak limit, we obtain 
\begin{equation*}
\Theta_t - \Gamma_t = \E^{\cG_t}\left(  \frac{1}{\xi^{q} } \right)-  \frac{1}{(Y_{t})^{q}} + q(T -t)     + q \E^{\cG_t}  \int_{t}^{T} V_s \overleftarrow{dB_{s}}.
\end{equation*}
We can remark that $\left( \Theta_{t} \right)_{0 \leq t < T}$ is a non-negative supermartingale and for any $t\in[0,T]$:
$$\frac{1}{(Y_{t})^{q}} = q(T-t) + \E^{\cG_{t}} \left(\frac{1}{\xi^{q}} \right)  - \Theta_t + \Gamma_t + q \E^{\cG_t}  \int_{t}^{T} V_s \overleftarrow{dB_{s}}.$$
$\Theta$ being a right-continuous non-negative supermartingale, the limit of $\Theta_{t}$ as $t$ goes to $T$ exists $\prb$-a.s. and this limit $\Theta_{T^-}$ is finite $\prb$-a.s. The same holds for the backward It\^o integral with limit $M_T=0$. The $L^{1}$-bounded martingale $\E^{\cG_{t}} \left(\frac{1}{\xi^{q}} \right)$ converges a.s. to $1/ \xi^{q}$, as $t$ goes to $T$,  then the limit of $Y_{t}$ as $t \to T$ exists and is equal to:
$$\lim_{t \rightarrow T, \ t < T} Y_{t} = \frac{1}{(\frac{1}{\xi^{q}} - \Theta_{T^-} )^{1/q}}.$$
If we were able to prove that $\Theta_{T^-}$ is zero a.s., we would have shown that $Y_{T} = \xi$. 

\subsubsection{The case $\xi$ non negative}

Now we just assume that $\xi \geq 0$. We cannot apply the It\^{o} formula to $1/(Y^{n})^{q}$ because we have no positive lower bound for $Y^{n}$. We define for $n \geq 1$ and $m \geq 1$, $\xi^{n,m}$ by:
$$\xi^{n,m} = \left( \xi \wedge n \right) \vee \frac{1}{m},$$
This random variable is in $L^{2}$ and is greater or equal to $1/m$ a.s. , with $\xi^{n,m}$ as terminal condition, has a unique solution $(\widetilde{Y}^{n,m},\widetilde{Z}^{n,m})$ of our BSDE \eqref{eq:sing_BDSDE}. Let us come back to \eqref{eq:approx_inf_xi}. We have already proved that
\begin{equation*}
\E \left| \widetilde{Y}^{n,m}_{t} -Y^{n}_{t} \right|^2  \leq e^{(1+K)T} \E \left| \xi^{n,m} - \xi^{n} \right|^2 \leq e^{(1+K)T}  \frac{1}{m^2}.
\end{equation*}
Now using \eqref{eq:Ito_formula} with $t=0$ and taking the expectation we obtain first:
\begin{eqnarray*}
&&(1-\eps) \E \int_{0}^{T}  \left| \widetilde{Z}^{n,m}_{r} - Z^{n}_{r} \right|^2 dr  \leq \left( 1+(1+K)T e^{(1+K)T} \right) \frac{1}{m^2}
\end{eqnarray*}
from which we deduce that the two stochastic integrals in \eqref{eq:Ito_formula} are true martingales. Therefore we can use Burckholder-Davis-Gundy inequality and once again from \eqref{eq:Ito_formula} there exists a constant $C$ such that:
\begin{equation*}
\E \left( \sup_{t \in [0,T]} \left| \widetilde{Y}^{n,m}_{t} - Y^n_{t} \right|^2\right)  \leq \frac{C}{m^2}.
\end{equation*}
From Fatou's lemma the same inequality holds for $Y^m-Y$. Since $\widetilde{Y}^{m}$ has a limit on the left at $T$, so does $Y$. 

\subsection{Minimal solution}

In this section we will achieve the proof of Theorem \ref{thm:exist_min_sol}. Let $(\widetilde Y,\widetilde Z)$ be another non negative solution of BDSDE \eqref{eq:sing_BDSDE} in the sense of Definition \ref{definsolution}. Note that we will only use that 
$$\liminf_{t\to T} \widetilde Y_t \geq \xi$$ 
(and not the stronger condition (D3)). Then a.s. for any $t\in [0,T]$, $Y_t \leq \widetilde Y_t$.

\begin{lem}\label{lem:upper_bound}
With the assumptions of Theorem \ref{thm:exist_min_sol}, we prove:
$$\forall t \in [0,T], \ \widetilde{Y}_{t} \leq \left( \frac{1}{q(T-t)} \right)^{\frac{1}{q}}.$$
\end{lem}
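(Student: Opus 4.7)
Fix $t_{0} \in [0,T)$. The strategy is to trap $\widetilde Y_{t_{0}}$ between the explicit deterministic ``barrier'' $\Xi^{s,n}$ associated with a constant terminal $n$ at an intermediate time $s<T$, and truncations of $\widetilde Y$ itself, and then to pass to two successive limits: first $n\to\infty$, then $s\uparrow T$.

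Choose $s \in (t_{0},T)$. By condition (D2) of Definition \ref{definsolution}, the random variable $\widetilde Y_{s}$ belongs to $L^{2}(\Omega,\tri_{s},\Prb)$. For each integer $n\geq 1$, let $(\widehat Y^{n},\widehat Z^{n})\in\cE^{2}(t_{0},s)$ denote the unique solution, given by (the interval-localized version of) Theorem \ref{thm:existence_sol}, of the BDSDE on $[t_{0},s]$ with generator $f(y)=-y|y|^{q}$, coefficient $g$, and terminal condition $\widetilde Y_{s}\wedge n$. Thanks to the standing assumption $g(r,y,0)=0$, the same BDSDE on $[t_{0},s]$ with the deterministic constant terminal $n$ admits the explicit solution
$$\Xi^{s,n}_{t}=\left(q(s-t)+\frac{1}{n^{q}}\right)^{-1/q},\qquad t\in[t_{0},s],$$
with vanishing $Z$-component.

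Apply the comparison principle (Proposition \ref{prop:comp_result}) on $[t_{0},s]$ twice. Since $\widetilde Y_{s}\wedge n \leq n$ a.s.\ and both BDSDEs share the same generator and coefficient $g$, the first application yields $\widehat Y^{n}_{t_{0}} \leq \Xi^{s,n}_{t_{0}} = (q(s-t_{0})+1/n^{q})^{-1/q}$ a.s. Since $\widetilde Y_{s}\wedge n \leq \widetilde Y_{s}$ and $(\widetilde Y,\widetilde Z)$ restricted to $[t_{0},s]$ is itself the unique $\cE^{2}$-solution with terminal $\widetilde Y_{s}$, the second application yields $\widehat Y^{n}_{t_{0}} \leq \widetilde Y_{t_{0}}$ a.s. Moreover, again by comparison, the sequence $n\mapsto \widehat Y^{n}_{t_{0}}$ is non-decreasing.

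Let $n \to \infty$. Dominated convergence gives $\widetilde Y_{s}\wedge n \to \widetilde Y_{s}$ in $L^{2}$, and the $L^{2}$-continuous dependence of the monotone BDSDE solution on its terminal data, which is a direct consequence of the a priori estimates proved in Section \ref{sect:mono_BDSDE}, forces $\widehat Y^{n}_{t_{0}} \to \widetilde Y_{t_{0}}$ in $L^{2}$; combined with monotonicity in $n$, this convergence is almost sure and from below. Passing to the limit in the explicit upper bound produces $\widetilde Y_{t_{0}} \leq (q(s-t_{0}))^{-1/q}$ a.s., and finally letting $s\uparrow T$ yields $\widetilde Y_{t_{0}} \leq (q(T-t_{0}))^{-1/q}$, as announced. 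The one delicate ingredient is the $L^{2}$-stability of the BDSDE solution map in its terminal data; this is precisely the type of estimate already established in the proof of Theorem \ref{thm:existence_sol}, so no new analytical difficulty arises.
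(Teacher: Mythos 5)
Your argument is correct, but it follows a genuinely different route from the paper. You truncate the terminal data: on $[t_0,s]$ you solve auxiliary BDSDEs with terminal value $\widetilde Y_s\wedge n$, compare them (Proposition \ref{prop:comp_result}) with the finite deterministic barrier $\Xi^{s,n}_t=(q(s-t)+n^{-q})^{-1/q}$, recover $\widetilde Y_{t_0}$ in the limit $n\to\infty$ via an $L^2$-stability estimate in the terminal data (the same type of estimate as \eqref{eq:approx_inf_xi}, valid here since $f(y)=-y|y|^q$ is monotone with $\mu=0$ and $g$ is Lipschitz), and then let $s\uparrow T$. The paper instead avoids constructing any auxiliary solutions: it compares $\widetilde Y$ directly, on $[0,T-h]$, with the deterministic function $\Lambda_h(t)=(q(T-h-t))^{-1/q}$, which solves the ODE $\Lambda_h'=\Lambda_h^{1+q}$ and blows up at $T-h$; applying It\^o's formula to $\bigl((\widetilde Y-\Lambda_h)^+\bigr)^2$, Gronwall's lemma, and the a.s.\ finiteness of $\widetilde Y_{T-h}$ kills the positive part at once, and then $h\downarrow 0$. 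The paper's route is more economical: it needs only It\^o plus Gronwall, no new existence statement and no continuity of the solution map in the terminal condition. Your route is also sound, but note that it quietly relies on an ``interval-localized'' version of Theorem \ref{thm:existence_sol} and of Proposition \ref{prop:comp_result} on $[t_0,s]$ with terminal data $\widetilde Y_s\wedge n$ that is $\tri_s=\tri^W_s\vee\tri^B_{s,T}$-measurable rather than $\tri^W_s$-measurable, which is not literally what the paper states (its terminal conditions are $\tri^W_T$-measurable at time $T$); this extension is standard—treat $\tri^B_{s,T}$ as independent extra information and rerun the construction and the It\^o-based comparison/stability proofs, which only use adaptedness to $\cG$ and square integrability—but it deserves an explicit remark, and it is precisely the overhead the paper's direct barrier argument is designed to avoid. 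Also, your second comparison ($\widehat Y^n_{t_0}\le\widetilde Y_{t_0}$) and the monotonicity in $n$ are not needed: the $L^2$-stability already identifies the limit, after extracting an a.s.\ convergent subsequence if one does not invoke monotonicity.
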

\begin{proof}
For every $0 < h < T$, we define on $[0,T-h]$
$$\Lambda_{h}(t) = \left( \frac{1}{q(T-h-t)} \right)^{\frac{1}{q}}.$$
$\Lambda_{h}$ is the solution of the ordinary differential equation:
$$\Lambda_{h}'(t)=(\Lambda_{h}(t))^{1+q}$$ 
with final condition $\Lambda_{h}(T-h) = + \infty$. But on the interval $[0,T-h]$, $(\widetilde{Y}, \widetilde{Z})$ is a solution of the BDSDE \eqref{eq:sing_BDSDE} with final condition $\widetilde{Y}_{T-h}$. From the assumptions $\widetilde{Y}_{T-h}$ is in $L^{2}(\Omega)$, so is finite a.s. Now we take the difference between $\widetilde{Y}$ and $\Lambda_{h}$ for all $0 \leq t \leq s < T-h$:
\begin{eqnarray*}
 \hat Y_t =  \widetilde{Y}_{t} -\Lambda_{h}(t)  & = &  \widetilde{Y}_{s} -\Lambda_{h}(s) - \int_{t}^{s} \left[  \left( \widetilde{Y}_{r}  \right)^{1+q} - \Lambda_{h}(r)^{1+q} \right] dr \\
& + & \int_t^s \left[g(r,\widetilde{Y}_r,\widetilde{Z}_r) - g(r,\Lambda_{h}(r),0)\right] \olaB - \int_{t}^{s} \widetilde{Z}_{r} dW_{r} .
\end{eqnarray*}
Recall that $g(t,y,0)=0$. We apply It\^o's formula to $(\hat Y_t^+)^2$ between $t$ and $s$:
\begin{eqnarray*}
(\hat Y_t^+)^2 & \leq &(\hat Y_s^+)^2 - 2 \int_t^s \hat Y_r^+ \left( \Lambda_{h}(r)^{1+q} - \left( \widetilde{Y}_{r} \right)^{1+q} \right)  dr \\
& + &  2 \int_t^s \hat Y_r^+ g(r,\widetilde{Y}_r,\widetilde{Z}_r)\overleftarrow{dB_r} - 2 \int_t^s \hat Y_r^+ \widetilde Z_{r}dW_{r}\\
& + & \int_t^s \ind_{\hat Y_r > 0} |g(r,\widetilde{Y}_r,\widetilde{Z}_r)- g(r,\Lambda_{h}(r),0)|^2dr - \int_t^s\ind_{\hat Y_r > 0} |\widetilde Z_r|^2 dr 
\end{eqnarray*}
The generator $f$ of this BDSDE satisfies Condition \eqref{ineq:monoton} with $\mu=0$, and $g$ satisfies \eqref{eq:property_Lip_g}. Thus 
\begin{eqnarray*}
(\hat Y_t^+)^2 &\leq &(\hat Y_s^+)^2 + K_g \int_t^s(\hat Y_r^+)^2  dr - (1-\eps) \int_t^s\ind_{\hat Y_r > 0} |\widetilde Z_r|^2 dr  \\
& + &  2 \int_t^s \hat Y_r^+ g(r,\widetilde{Y}_r,\widetilde{Z}_r)\overleftarrow{dB_r} - 2 \int_t^s \hat Y_r^+ \widetilde Z_{r}dW_{r}.
\end{eqnarray*}
We take the expectation of both sides. Since $(\widetilde Y,\widetilde Z)$ is in $\cE^2(0,s)$, the martingale part disappears and we deduce that:
\begin{eqnarray*}
\E (\hat Y_t^+)^2 &\leq  &\E (\hat Y_s^+)^2 + K_g  \int_t^s \E (\hat Y_r^+)^2 dr .
\end{eqnarray*}
By Gronwall's inequality we obtain:
$$\E (\hat Y_t^+)^2 \leq e^{K_g(s-t)} \E (\hat Y_s^+)^2.$$
Remark that for any $0 \leq t \leq T-h$
$$0 \leq \hat Y_t^+ \leq \sup_{0\leq t\leq T-h} \widetilde Y_t = \Xi_{T-h}.$$
Since $\widetilde Y\in \bS^2(0,T;\R_+)$, $\Xi_{T-h} \in L^2(\Omega)$. By dominated convergence theorem  as $s$ goes to $T-h$:
$$\E (\hat Y_t^+)^2 \leq e^{K_g(T-h-t)} \E (\hat Y_{T-h}^+)^2 = 0.$$
Thus $\widetilde Y_t \leq \Lambda_{h}(t)$ for all $t \in [0,T-h]$ and for all $0 < h < T$. So it is clear that for every $t \in [0,T]$:
$$\widetilde{Y}_{t} \leq \left( \frac{1}{q(T-t)} \right)^{\frac{1}{q}}.$$
This achieves the proof of the Lemma.
\end{proof}

Let us prove now minimality of our solution. We will prove that $\widetilde{Y}$ is greater than $Y^{n}$ for all $n \in \N$, which implies that $Y$ is the minimal solution. Let $(Y^{n},Z^{n})$ be the solution of the BSDE \eqref{eq:sing_BDSDE} with $\xi \wedge n$ as terminal condition. By comparison with the solution of the same BSDE with the deterministic terminal data $n$:
$$Y^{n}_{t} \leq \left( \frac{1}{q(T-t)+1/n^{q}} \right)^{1/q} \leq n.$$
Between the instants $0 \leq t \leq s < T$:
\begin{eqnarray} \nonumber
\hat Y_t =Y^{n}_{t} - \widetilde{Y}_{t} & = & \left(Y^{n}_{s} - \widetilde{Y}_{s}  \right)  - \int_{t}^{s} \left( (\widetilde{Y}_{r})^{1+q}  - (Y^{n}_{r})^{1+q}  \right) dr - \int_{t}^{s} \left( Z^{n}_{r} - \widetilde{Z}_{r}  \right) dW_{r} \\ \nonumber
&+& \int_t^s \left[ g(r,Y^n_{r},Z^n_{r}) - g(r,\widetilde{Y}_{r},\widetilde{Z}_{r})  \right] \olaB.
\end{eqnarray}
Once again we apply It\^o's formula to $(\hat Y_t^+)^2$:
\begin{eqnarray*}
(\hat Y^+_{t})^2 & \leq & (\hat Y^+_{s})^2 - 2 \int_t^s \hat Y_r^+ \left(  \left( Y^n_{r} \right)^{1+q}-(\widetilde Y_r)^{1+q}  \right)  dr \\
& + &  2 \int_t^s \hat Y_r^+ \left[ g(r,Y^n_r,Z^n_r) - g(r,\widetilde{Y}_r,\widetilde{Z}_r) \right] \overleftarrow{dB_r} - 2 \int_t^s \hat Y_r^+ \hat Z_{r}dW_{r}\\
& + & \int_t^s \ind_{\hat Y_r > 0} |g(r,Y^n_r,Z^n_r)-g(r,\widetilde{Y}_r,\widetilde{Z}_r)|^2dr - \int_t^s\ind_{\hat Y_r > 0} |\hat Z_r|^2 dr 
\end{eqnarray*}
and we deduce that 
$$\E(\hat Y^+_{t})^2 \leq e^{K_g(s-t)} \E(\hat Y^+_{s})^2.$$
Since $0\leq \hat Y^+_t \leq Y^n_t \leq n$, by dominated convergence theorem, we can take the limit as $s$ goes to $T$ and we obtain that $\E(\hat Y^+_{t})^2 = 0$. 

\section{Limit at time $T$ by localization technic} \label{sect:cont}

From now, the process $Y$ is continuous on $[0,T[$ and we define $Y_{T} = \xi$. The main difficulty will be to prove the continuity at time $T$. We have already proved that
 $$\liminf_{t \to T} Y_{t} = \lim_{t \to T} Y_t$$ 
and remarked that $Y$ is lower semi-continuous on $[0,T]$:
\begin{equation*} 
\xi \leq \liminf_{t \to T} Y_{t}.
\end{equation*}
In this paragraph we prove that the inequality in \eqref{liminf} is in fact an equality, i.e.
\begin{equation*}
\xi = \liminf_{t \to T} Y_{t}.
\end{equation*}
Note that the only remaining problem is on the set $\cR=\{\xi < +\infty\}$.

From now on, the conditions of Theorem \ref{thm:continuity_T} hold. In particular the terminal condition $\xi$ is equal to $h(X_T)$, where $h$ is a function defined on $\R^d$ with values in $\overline{\R^{+}}$. We denote by $\cS = \left\{ h=+\infty \right\}$ the closed set of singularity and $\cR$ its complement. In order to prove continuity at time $T$, we will show that for any function $\tet$ of class $C^{2}(\R^{d};\R^{+})$ with a compact support strictly included in $\cR = \left\{ h <+ \infty \right\}$ and for any $t \in [0,T]$, we have 
\begin{eqnarray} \label{eq:localization} 
\E(\xi \tet(X_{T})) & = & \E(Y_{t} \tet(X_{t})) + \E \int_{t}^{T} \tet(X_{r}) (Y_{r})^{1+q} dr + \E \int_{t}^{T} Y_{r}
\ope \theta (X_{r}) dr \\ \nonumber
& + & \E \int_{t}^{T} Z_{r} . \nabla \tet (X_{r}) \sigma(r,X_{r}) dr
\end{eqnarray}
with suitable integrability conditions on the last three terms in the right-hand side. Here $\ope$ is the operator:
\begin{equation} \label{eq:operator}
\ope = \frac{1}{2} \sum_{i,j} (\sigma \sigma^{*})_{ij} (t,x) \frac{\partial^{2}}{\partial x_{i} \partial x_{j}} + 
\sum_{i} b_{i}(t,x) \frac{\partial}{\partial x_{i}} = \frac{1}{2} \tr \left( \sigma \sigma^{*} (t,x) D^{2} \right) + b(t,x).\nabla;
\end{equation}
where in the rest of the paper, $\nabla$ and $D^{2}$ will always denote respectively the gradient and the Hessian matrix w.r.t. the space variable. If we let $t$ go to $T$ in Equality \eqref{eq:localization} and if we apply Fatou's lemma, we have:
\begin{equation} \label{eq:conclineq}
\E \left[ \xi \tet(X_{T}) \right]  =  \lim_{t \to T} \E \left[  Y_{t} \tet(X_{t}) \right] \geq  \E \left[ \left( \liminf_{t \to T} Y_{t} \right) \tet(X_{T}) \right]. 
\end{equation}
Note that we need the suitable estimates on the last three terms in \eqref{eq:localization}. Now recall that we already know \eqref{liminf}. Hence, the inequality in \eqref{eq:conclineq} is in fact a equality, i.e.
$$\E \left[ h(X_{T}) \tet (X_{T}) \right] = \E \left[ \tet (X_{T}) \left( \liminf_{t \to T} Y_{t} \right) \right].$$ 
And with \eqref{liminf} once again, we conclude that:
$$\lim_{t \to T} Y_t = \liminf_{t \to T} Y_{t} = h(X_{T}), \quad \prb- \mbox{a.s. on } \left\{ h(X_{T}) < \infty \right\}.$$
 
In the next subsections we prove that \eqref{eq:localization} holds. As in \cite{popi:06} the proof depends on the value of $q$ and we distinguish $q>2$ where no other assumption is needed (the non linearity is ``strong enough'') and $q \leq 2$ where we have to add additional conditions. Moreover the arguments are almost the same as in \cite{popi:06}, thus technical details will be skip here. 

Let $\phii$ be a function in the class $C^{2} \left(\R^{d}\right)$ with a compact support. Let $(Y,Z)$ be the solution of the BDSDE \eqref{eq:sing_BDSDE} with the final condition $\zeta \in L^{2}(\Omega)$. For any $t \in [0,T]$: 
\begin{eqnarray*} 
Y_{t} \phii(X_{t}) & = & Y_{0} \phii(X_{0}) + \int_{0}^{t} \phii(X_{r}) \left[Y_{r}|Y_{r}|^{q}  dr - g(r,Y_r,Z_r) \overleftarrow{dB_r}+ Z_{r}.dW_{r} \right] \\
& & + \int_{0}^{t} Y_{r} d(\phii(X_{r}))  + \int_{0}^{t} Z_{r} . \nabla \phii (X_{r}) \sigma(r,X_{r}) dr \\
& = & Y_{0} \phii(X_{0}) + \int_{0}^{t} \phii(X_{r}) Y_{r} |Y_{r}|^{q}dr + \int_{0}^{t} Z_{r} . \nabla \phii (X_{r}) \sigma(r,X_{r}) dr\\
& & +\int_{0}^{t} Y_{r} \ope \phii (X_{r}) dr + \int_{0}^{t} \left( Y_{r} \nabla \phii (X_{r}) \sigma(r,X_{r}) + \phii (X_{r}) Z_{r} \right). dW_{r} \\
&& -\int_{0}^{t} \phii(X_{r}) g(r,Y_r,Z_r) \overleftarrow{dB_r}
\end{eqnarray*}
where $\ope$ is the operator defined by \eqref{eq:operator}. Taking the expectation:
\begin{eqnarray} \label{eq:derivation}
\E(Y_{t} \phii(X_{t})) &=& \E (Y_{0} \phii(X_{0})) +  \E\int_{0}^{t} \phii(X_{r}) Y_{r} |Y_{r}|^{q} dr \\ \nonumber
& & + \E \int_{0}^{t} Z_{r} . \nabla \phii (X_{r}) \sigma(r,X_{r}) dr + \E \int_{0}^{t} Y_{r} \ope \phii (X_{r}) dr.
\end{eqnarray}
Note that the generator $g$ does not appear in this expression. Hence no extra assumption will be added in the case $q > 2$.

Let $U$ be a bounded open set with a regular boundary and such that the compact set $\overline{U}$ is included in $\cR$. We denote by $\Phi = \Phi_{U}$ a function which is supposed to belong to $C^{2}(\R^{d};\R_{+})$ and such that $\Phi$ is equal to zero on $\R^{d} \setminus U$, is positive on $U$. Let $\al$ be a real number such that 
$$\al > 2(1+1/q).$$
For $n \in \N$, let $(Y^{n},Z^{n})$ be the solution of the BSDE \eqref{eq:sing_BDSDE} with the final condition $(h \wedge n)(X_{T})$. The equality \eqref{eq:derivation} with $t=T$ becomes:
\begin{eqnarray} \nonumber
\E(Y^{n}_{T} \Phi^{\al}(X_{T})) & = & \E(Y^{n}_{0} \Phi^{\al}(X_{0})) +  
 \E \int_{0}^{T} Z^{n}_{r}. \nabla (\Phi^{\al}) (X_{r}) \sigma(r,X_{r}) dr \\  \label{eq:deriv}
& + & \E \int_{0}^{T} \Phi^{\al}(X_{r}) (Y^{n}_{r})^{1+q}  dr + \E \int_{0}^{T}  Y^{n}_{r} \ope (\Phi^{\al})(X_{r}) dr.
\end{eqnarray}
\begin{lem}\label{lem:tech_estim_1}
Let $p$ be such that 
$$\frac{1}{p} + \frac{1}{1+q} = 1.$$
Then 
$$\Phi^{-\al (p-1)} \left| \ope (\Phi^{\al}) \right|^{p} \in L^{\infty}([0,T] \times \R^{d}).$$
\end{lem}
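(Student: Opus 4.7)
The plan is a direct computation and growth estimate. First I would unwind $\ope(\Phi^{\al})$ by the chain rule. Since $\nabla(\Phi^{\al}) = \al \Phi^{\al-1}\nabla \Phi$ and $D^{2}(\Phi^{\al}) = \al(\al-1)\Phi^{\al-2}\nabla\Phi\,(\nabla\Phi)^{*} + \al \Phi^{\al-1} D^{2}\Phi$, one obtains
\begin{equation*}
\ope(\Phi^{\al}) \;=\; \frac{\al(\al-1)}{2}\,\Phi^{\al-2}\,(\nabla\Phi)^{*}\sigma\sigma^{*}\nabla\Phi \;+\; \al\,\Phi^{\al-1}\!\left[\tfrac{1}{2}\tr\!\left(\sigma\sigma^{*}D^{2}\Phi\right) + b\cdot\nabla\Phi\right].
\end{equation*}
Because $\al > 2(1+1/q) > 2$, these formulas extend by continuity to the zero set of $\Phi$ (both sides vanish there), so the identity holds on all of $[0,T]\times\R^d$.

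Next I would bound each factor. The functions $\nabla\Phi$, $D^{2}\Phi$ are continuous with support in the compact set $\overline{U}$, and $b$, $\sigma$ are continuous (hence bounded on $[0,T]\times\overline U$) under~\eqref{eq:lipcond}. Therefore there exists $C>0$ with
\begin{equation*}
\left|\ope(\Phi^{\al})\right| \;\leq\; C\,\Phi^{\al-2} + C\,\Phi^{\al-1}.
\end{equation*}
Since $\Phi$ is itself bounded (compact support, continuous), $\Phi^{\al-1}\leq \|\Phi\|_{\infty}\Phi^{\al-2}$, and we get $|\ope(\Phi^{\al})|\leq C'\Phi^{\al-2}$ globally.

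Raising to the power $p$ and multiplying,
\begin{equation*}
\Phi^{-\al(p-1)}\,\left|\ope(\Phi^{\al})\right|^{p} \;\leq\; (C')^{p}\,\Phi^{(\al-2)p - \al(p-1)} \;=\; (C')^{p}\,\Phi^{\al - 2p}.
\end{equation*}
Now $p = (q+1)/q$ gives $2p = 2+2/q$, and the assumption $\al > 2(1+1/q)$ means precisely $\al - 2p > 0$. Since $0\leq \Phi\leq \|\Phi\|_{\infty}$, the quantity $\Phi^{\al-2p}$ is bounded on $\R^{d}$, and the claimed $L^{\infty}$ bound follows. The only subtle point, and it is immediately settled by the hypothesis $\al > 2(1+1/q)$, is the matching of the exponents so that no negative power of $\Phi$ survives; there is no genuine obstacle in this proof.
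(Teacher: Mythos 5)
Your proof is correct and is precisely the standard chain-rule computation that the paper itself skips (referring the technical details to \cite{popi:06} and to Lemma 2.2 of \cite{marc:vero:99}): one bounds $|\ope(\Phi^{\al})|$ by $C\Phi^{\al-2}$ and observes that the hypothesis $\al>2(1+1/q)=2p$ makes the surviving exponent $\al-2p$ positive. The only cosmetic point is that the uniform boundedness of $b$ and $\sigma$ on $[0,T]\times\overline{U}$ should be drawn from the growth condition \eqref{eq:growthcond} (or from \eqref{boundcond} in the case $q\leq 2$) rather than from \eqref{eq:lipcond} alone, since the coefficients are only assumed measurable in $t$.
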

Lemma \ref{lem:tech_estim_1} and H\"older inequality show that there exists a constant $C$ such that 
\begin{equation} \label{eq:growth_comp_explos_term}
\forall n \in \N, \quad \E \int_{0}^{T} \left| Y^{n}_{r} \ope (\Phi^{\al})(X_{r}) \right| dr \leq C  \left[ \E \int_{0}^{T} \Phi^{\al}(X_{r}) (Y^{n}_{r})^{1+q}  dr \right]^{1/(1+q}.
\end{equation}
We distinguish the case $q>2$ and $q\leq 2$ in order to control the term containing $Z$ in \eqref{eq:deriv}.

\subsection{Proof of \eqref{eq:localization} if $q>2$}

Proposition \ref{prop:sharp_estim_Z} and the Cauchy-Schwarz inequality prove immediatly the following result. 
\begin{lem}[Case $q > 2$]\label{lem:tech_estim_2}
If $q > 2$, then there exists a constant $C=C(q,\Phi,\al,\sigma)$ such that for all $n \in \N$:
\begin{equation} \label{eq:boundqgreater2}
\E \int_{0}^{T} \left| Z^{n}_{r}. \nabla (\Phi^{\al}) (X_{r}) \sigma(r,X_{r})  \right|  dr \leq C.
\end{equation}
\end{lem}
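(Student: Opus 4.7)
The plan is to reduce the estimate to the weighted bound on $Z^n$ provided by Proposition \ref{prop:sharp_estim_Z}, using Cauchy--Schwarz with an appropriate power of $(T-r)$ to compensate the weight. Since $\Phi$ has compact support, $\nabla(\Phi^{\al})$ is bounded on $\R^d$. Moreover, even though $\sigma$ is only of linear growth under \eqref{eq:growthcond}, on the support of $\nabla(\Phi^{\al})$ the argument $X_r$ is confined to a compact set, so the product $|\nabla(\Phi^{\al})(x)\sigma(r,x)|$ is uniformly bounded by some constant $C_1 = C_1(\Phi,\al,\sigma)$.

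Next I would write, for every $n$,
\begin{equation*}
\E \int_{0}^{T} \left| Z^{n}_{r} \cdot \nabla(\Phi^{\al})(X_{r}) \sigma(r,X_{r}) \right| dr \leq C_1 \, \E \int_0^T |Z^n_r| \, dr = C_1 \, \E \int_0^T (T-r)^{1/q} |Z^n_r| \cdot (T-r)^{-1/q} \, dr,
\end{equation*}
and then apply the Cauchy--Schwarz inequality (first in $dr$, then in $\E$) to get
\begin{equation*}
\E \int_{0}^{T} \left| Z^{n}_{r} \cdot \nabla(\Phi^{\al})(X_{r}) \sigma(r,X_{r}) \right| dr \leq C_1 \left( \E \int_0^T (T-r)^{2/q} |Z^n_r|^2 \, dr \right)^{1/2} \left( \int_0^T (T-r)^{-2/q} dr \right)^{1/2}.
\end{equation*}

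The first factor on the right is controlled, uniformly in $n$, by Proposition \ref{prop:sharp_estim_Z} (whose proof is carried out on the approximating sequence $(Y^n,Z^n)$, yielding a bound independent of $n$). The deterministic integral $\int_0^T (T-r)^{-2/q} dr$ is finite precisely because $q > 2$ forces $2/q < 1$, and this is the only place where the assumption $q > 2$ is used. Combining these two facts gives the desired bound \eqref{eq:boundqgreater2} with a constant depending only on $q$, $T$, $K$, $\eps$, $\Phi$, $\al$ and $\sigma$. The only subtle point — really the reason for this case distinction in Theorem \ref{thm:continuity_T} — is the borderline integrability of $(T-r)^{-2/q}$ at $r = T$, which forces $q > 2$; when $q \leq 2$ this elementary argument fails and one needs the additional structural hypotheses \eqref{boundcond}, \eqref{secondderiv}, \eqref{ellipcond}, \eqref{eq:hyp3} together with Malliavin calculus to handle the $Z^n$-term.
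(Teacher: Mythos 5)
Your proof is correct and follows essentially the same route as the paper, which simply invokes Proposition \ref{prop:sharp_estim_Z} together with the Cauchy--Schwarz inequality; your splitting of $|Z^n_r|$ as $(T-r)^{1/q}|Z^n_r|\cdot(T-r)^{-1/q}$ and the observation that $\int_0^T (T-r)^{-2/q}dr<\infty$ exactly when $q>2$ is precisely the intended argument, with the uniform-in-$n$ bound on $\E\int_0^T(T-r)^{2/q}|Z^n_r|^2dr$ available from the proof of that proposition. The only detail you add explicitly, the boundedness of $\nabla(\Phi^{\al})\sigma$ on the compact support of $\Phi$, is correct and implicit in the paper.
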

Lemmata \ref{lem:tech_estim_1} and \ref{lem:tech_estim_2} and Equality \eqref{eq:deriv} imply the next result.
\begin{lem} \label{lem:estimL1}
The sequence $\Phi^{\al}(X) (Y^{n})^{1+q}$ is a bounded sequence in $L^{1}(\Omega \times [0,T])$ and with the Fatou lemma, $Y^{1+q}\Phi^{\al}(X)$ belongs to $L^{1}(\Omega \times [0,T])$:
\begin{equation*} 
\E \int_{0}^{T} Y_{r}^{1+q}\Phi^{\al}(X_{r}) dr < + \infty.
\end{equation*}
\end{lem}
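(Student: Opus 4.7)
The plan is to extract the desired bound directly from Equality \eqref{eq:deriv} by isolating the explosion term $\E \int_0^T \Phi^{\al}(X_r)(Y^n_r)^{1+q}\,dr$ on the left-hand side and controlling every other term uniformly in $n$, then pass to the limit by monotone convergence.

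First I would rewrite \eqref{eq:deriv} as
\begin{eqnarray*}
\E \int_{0}^{T} \Phi^{\al}(X_{r}) (Y^{n}_{r})^{1+q}\,dr
&=& \E \big(Y^{n}_{T} \Phi^{\al}(X_{T})\big) - \E \big(Y^{n}_{0} \Phi^{\al}(X_{0})\big)  \\
&& -\, \E \int_{0}^{T} Z^{n}_{r}\cdot \nabla (\Phi^{\al})(X_{r}) \sigma(r,X_{r})\,dr
- \E \int_{0}^{T} Y^{n}_{r}\, \ope (\Phi^{\al})(X_{r})\,dr,
\end{eqnarray*}
and handle the four terms on the right individually. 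For the first term, since $\supp \Phi \subset U$ with $\overline{U}$ a compact subset of $\cR$, we have $0\leq Y^n_T \Phi^\al(X_T) \leq \|\Phi\|_\infty^\al h(X_T)\ind_{\overline U}(X_T)$, which is in $L^1(\Omega)$ by hypothesis \eqref{eq:hyp2}; so this term is bounded by a constant independent of $n$. The second term is trivially bounded by \eqref{ineq:estimapriori}. The third term is bounded by Lemma \ref{lem:tech_estim_2}.

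For the fourth term, denote $A_n := \E \int_0^T \Phi^{\al}(X_r)(Y^n_r)^{1+q}\,dr$. Estimate \eqref{eq:growth_comp_explos_term} gives
$$ \E \int_{0}^{T} \big| Y^{n}_{r}\, \ope (\Phi^{\al})(X_{r}) \big|\,dr \leq C\, A_n^{1/(1+q)}.$$
Plugging these four estimates back in, we obtain an inequality of the form
$$ A_n \leq C_1 + C_2\, A_n^{1/(1+q)}, \qquad \forall n \in \N, $$
with constants independent of $n$. Since $1/(1+q) < 1$, an elementary argument (Young's inequality, or simply discussing the real roots of $x \mapsto x - C_2 x^{1/(1+q)} - C_1$) yields a uniform bound $A_n \leq M$ for some $M=M(C_1,C_2,q)$. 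This is precisely the $L^1$-boundedness claim.

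Finally, since $Y^n \ua Y$ monotonically by \eqref{eq:def_sol} and $\Phi^\al(X)\geq 0$, Fatou's lemma (or the monotone convergence theorem) gives
$$ \E \int_0^T \Phi^\al(X_r) Y_r^{1+q}\,dr \leq \liminf_{n\to\infty} A_n \leq M < +\infty,$$
which is the second assertion of the lemma. There is essentially no technical obstacle here beyond the self-absorbing inequality; the whole point of having chosen $\al$ large enough (so that Lemma \ref{lem:tech_estim_1} produces the $1/(1+q)$ exponent) is precisely to make this bootstrap work.
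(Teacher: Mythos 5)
Your proposal is correct and follows essentially the same route as the paper: you rearrange \eqref{eq:deriv}, bound the terminal term via \eqref{eq:hyp2} (support of $\Phi$ in $\cR$), the initial term via \eqref{ineq:estimapriori}, the $Z^n$-term via Lemma \ref{lem:tech_estim_2}, and absorb the $\ope(\Phi^{\al})$-term through \eqref{eq:growth_comp_explos_term}, which is exactly the paper's argument, merely with the self-absorbing inequality $A_n \leq C_1 + C_2 A_n^{1/(1+q)}$ spelled out instead of being phrased as a boundedness-by-contradiction. The concluding passage to $Y$ by Fatou's lemma is likewise the paper's.
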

\begin{proof}
In \eqref{eq:deriv}, since the support of $\Phi$ is in $F_\infty^c$ and from the previous lemma, the first three terms are bounded w.r.t. $n$ by some constant $C$. If the sequence is not bounded, from inequality \eqref{eq:growth_comp_explos_term}, there is a contradiction.
\end{proof}

Now we prove Equality \eqref{eq:localization}. Let $\tet$ be a function of class $C^{2}(\R^{d};\R^{+})$ with a compact support strictly included in $\cR = \left\{ h < + \infty \right\}$. There exists a open set $U$ s.t. the support of $\tet$ is included in $U$ and $\overline{U} \subset \cR$. Let $\Phi = \Phi_{U}$ be the previously used function. Let us recall that $\al$ is strictly greater than $2(1+1/q) > 2$. Thanks to a result in the proof of the lemma 2.2 of \cite{marc:vero:99}, there exists a constant $C=C(\tet,\alpha)$ such that:  
$$|\tet| \leq C \Phi^{\al} \ , \ |\nabla \tet| \leq C \Phi^{\al-1} \ \mbox{and}\ \left\| D^{2} \tet \right\| \leq C \Phi^{\al-2}.$$
Using Lemma \ref{lem:estimL1} and the monotone convergence theorem, we have
$$\lim_{n\to + \infty} \E \int_t^T (Y^n_{r})^{1+q} \tet(X_r) dr = \E \int_t^T (Y_{r})^{1+q} \tet(X_r) dr,$$
with 
\begin{equation} \label{eq:estim_L1_1}
\E \int_{0}^{T} Y_{r}^{1+q}\tet(X_{r}) dr \leq C.
\end{equation}
We can do the same calculations using the previously given estimations on $\tet$, $\nabla \tet$ and $D^{2} \tet$ in terms of power of
$\Phi^{\al}$ and H\"{o}lder's inequality: 
\begin{equation*} 
\Phi^{-\al (p-1)} \left| \ope \theta \right|^{p} \in L^{\infty}([0,T] \times \R^{d}).
\end{equation*}
Now we can write:
$$Y^{n}_{r} \ope \theta(X_{r}) = \left( Y^{n}_{r} \Phi^{\al/(1+q)} \right) \left( \Phi^{-\al/(1+q)} \ope \theta(X_{r}) \right) = \left(
Y^{n}_{r} \Phi^{\al/(1+q)} \right) \left( \Phi^{-\al(p-1)/p} \ope \theta(X_{r}) \right).$$
The sequence $Y^{n}\Phi^{\al/(1+q)}=Y^{n}\Phi^{\al (1-1/p)}$ is a bounded sequence in $L^{1+q}(\Omega \times [0,T])$ (see
Lemma \ref{lem:estimL1}). Therefore using a weak convergence result and extracting a subsequence if necessary, we can pass to the limit in the term:
$$\E \int_{t}^{T} Y^{n}_{r} \ope \theta(X_{r}) dr,$$
with 
\begin{equation} \label{eq:estim_L1_2}
\E \int_{0}^{T} \left| Y_{r} \ope \theta(X_{r}) \right| dr \leq C.
\end{equation}

Recall the estimation \eqref{ineq:sharp_estim_Z}: there exists a constant $C$ such that for all $n \in \N$:
\begin{equation*}
\E \int_{0}^{T} | Z^{n}_{r} |^{2} (T-r)^{\frac{2}{q}}dr \leq C.
\end{equation*}
Hence, there exists a subsequence, which we still denote $Z^{n} (T-r)^{1/q}$, and which converges weakly in the space $L^{2}\left( \Omega \times (0,T), d \prb \times dt; \R^{d} \right)$ to a limit, and the limit is $Z (T-r)^{1/q}$, because we already know that $Z^{n}$ converges to $Z$ in $L^{2} \left( \Omega \times (0,T-\delta) \right)$ for all $\delta > 0$. $\nabla \tet (X) \sigma(.,X) (T- \ .)^{-1/q}$ is $L^{2} \left( \Omega \times (0,T) \right)$, because $\tet$ is compactly supported and $q > 2$. Therefore,
\begin{equation*} 
\lim_{n \to + \infty} \E \int_{t}^{T} Z^{n}_{r}. \nabla \tet (X_{r}) \sigma(r,X_{r}) dr = \E \int_{t}^{T} Z_{r}. \nabla \tet (X_{r}) \sigma(r,X_{r}) dr. 
\end{equation*}
And Lemma \ref{lem:tech_estim_2} shows that
\begin{equation} \label{eq:estim_L1_3}
\E \int_{0}^{T} \left| Z_{r}. \nabla \tet (X_{r}) \sigma(r,X_{r})  \right|  dr \leq C.
\end{equation}

To conclude we write Equality \eqref{eq:derivation} for $(Y^n,Z^n)$ and $\tet$ and we pass to the limit. This gives Equality \eqref{eq:localization} with the three estimates \eqref{eq:estim_L1_1}, \eqref{eq:estim_L1_2} and \eqref{eq:estim_L1_3}. 
 
\subsection{Proof of \eqref{eq:localization} if $q\leq2$}
 
If we just assume $q >0$, Lemma \ref{lem:tech_estim_2} does not hold anymore. In other words our previous control on the term containing $Z$ in \eqref{eq:localization} fails. But if we are able to prove that there exists a function $\psi$ such that for $0 < t \leq T$:
\begin{equation}\label{eq:malliavin_int_part}
\E \int_{t}^{T} Z^{n}_{r}. \nabla \tet (X_{r}) \sigma(r,X_{r}) dr = \E \int_{t}^{T} Y^{n}_{r} \psi(r,X_{r}) dr,
\end{equation}
then we apply again the H\"{o}lder inequality in order to control 
$$\E \int_{t}^{T} Y^{n}_{r} \psi(r,X_{r}) dr \quad \mbox{by} \quad \E \int_{t}^{T} (Y^{n}_{r})^{1+q} \Phi^{\al}(X_{r}) dr.$$
Note that we will add the following condition on $g$. From now on $g$ is a measurable function defined on $[0,T]\times \R^d \times \R \times \R^k$ such that the Lipschitz property \eqref{eq:property_Lip_g} holds and for any $(t,x,x',y,z) \in [0,T]\times (\R^d)^2 \times \R \times \R^m$
\begin{equation}\label{eq:lip_cond_g_x} \tag{A6}
 |g(t,x,y,z) - g(t,x',y,z)| \leq K_g |x-x'|.
\end{equation}

First we will prove the next result. In fact it is a quite straightforward modification of Proposition 2.3 in \cite{pard:peng:94} or Proposition 4.2 in \cite{bach:lasm:mato:13}. Let us denote by $\mathcal{B}^2(0,T;\mathbb{D}^{1,2})$ the set of processes $(Y,Z)$ such that $Y \in \mathbb{S}^2([0,T])$, $Z \in \mathbb{H}^2(0,T)$ and $Y_t$ and $Z_t$ belong to $\mathbb{D}^{1,2}$ with
$$\E \left[  \int_0^T ( |Y_t|^2 + |Z_t|^2) dt + \int_0^T \int_0^T (|D_sY_t|^2 + |D_s Z_t |^2) dtds \right] < + \infty.$$
\begin{lem}
Assume that $(Y,Z)$ is solution of the BSDE \eqref{eq:sing_BDSDE} with terminal condition $\xi = h(X_T)$:
\begin{equation*}
Y_{t}  =  h(X_T) - \int_{t}^{T} Y_{s}|Y_s|^q ds + \int_t^T g(s,X_s,Y_s,Z_s) \overleftarrow{dB_s}  - \int_{t}^{T} Z_{s} dW_{s}
\end{equation*}
where $h$ is a bounded Lipschitz function on $\R^d$, $g$ satisfies the previous Lipschitz condition and $X_s \in \mathbb{D}^{1,2}$ for every $s \in [0,T]$. 

\begin{enumerate}
\item Then $(Y,Z) \in \mathcal{B}^2(0,T;\mathbb{D}^{1,2})$ and for all $1 \leq i \leq d$, $\left\{ D_{s}^{i}Y_{s}, 0 \leq s \leq T \right\}$ is a version of $\left\{ (Z_{s})_{i}, 0 \leq s \leq T \right\}$. $(Z_{s})_{i}$ denotes the i-th component of $Z_{s}$. Here, $D_{s}^{i}Y_{s}$ has the following sense:  
$$D_{s}^{i}Y_{s} = \lim_{\underset{r < s}{r \rightarrow s}} D_{r}^{i}Y_{s}.$$
\item There exist two random fields $u$ and $v$ such that:
$$ Y_t=u(t,X_t)\quad and\quad Z_t=v(t,X_t)$$
and for any $(t,x)\in [0,T]\times\mathbb{R}^m$, $u(t,x)$ and $v(t,x)$ are $\mathcal{F}^B_{t,T}$-measurable.
\end{enumerate}
\end{lem}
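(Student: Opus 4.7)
The plan is to follow the Malliavin-calculus argument of Proposition 2.3 in \cite{pard:peng:94}, adapted to our doubly stochastic setting as in Proposition 4.2 of \cite{bach:lasm:mato:13}. The starting observation is that, since $h$ is bounded, a comparison argument with the ODE $y'=-y|y|^{q}$ (or simply with constant terminal values) implies that $Y$ is bounded by some constant $M=M(\|h\|_\infty,T,q)$. On the interval $[-M,M]$ the function $y\mapsto -y|y|^{q}$ is $C^{1}$ with derivative $-(q+1)|y|^{q}$ bounded by $(q+1)M^{q}$. Hence, up to replacing $f(y)=-y|y|^{q}$ by a globally Lipschitz $C^{1}_{b}$ modification $\tilde f$ that coincides with $f$ on $[-M,M]$, the BDSDE reduces to a standard Lipschitz BDSDE with generator $(s,y,z)\mapsto \tilde f(y)+g(s,X_{s},y,z)$. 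Uniqueness ensures the modified solution is the original one.

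Next I would run the usual Picard scheme in the Malliavin sense. Set $(Y^{0},Z^{0})=(0,0)$ and define $(Y^{n+1},Z^{n+1})$ as the unique solution of
\begin{equation*}
Y^{n+1}_{t}=h(X_{T})+\int_{t}^{T}\tilde f(Y^{n+1}_{s})\,ds+\int_{t}^{T}g(s,X_{s},Y^{n}_{s},Z^{n}_{s})\,\overleftarrow{dB_{s}}-\int_{t}^{T}Z^{n+1}_{s}\,dW_{s}.
\end{equation*}
Since $h$ is bounded Lipschitz, $h(X_{T})\in\mathbb{D}^{1,2}$ with $D_{r}h(X_{T})=\nabla h(X_{T})D_{r}X_{T}$. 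By induction, combining Proposition~2.2 of \cite{pard:peng:94} (Malliavin differentiability of the solution of a Lipschitz BDSDE when the data are in $\mathbb{D}^{1,2}$) with the chain rule, each $(Y^{n},Z^{n})$ lies in $\mathcal{B}^{2}(0,T;\mathbb{D}^{1,2})$ and its Malliavin derivative $(D_{r}Y^{n+1},D_{r}Z^{n+1})$ satisfies, for $r\le t$, the linear BDSDE
\begin{eqnarray*}
D_{r}Y^{n+1}_{t}&=&\nabla h(X_{T})D_{r}X_{T}+\int_{t}^{T}\tilde f'(Y^{n+1}_{s})D_{r}Y^{n+1}_{s}\,ds\\
&&+\int_{t}^{T}\bigl[\partial_{x}g\,D_{r}X_{s}+\partial_{y}g\,D_{r}Y^{n}_{s}+\partial_{z}g\,D_{r}Z^{n}_{s}\bigr]\,\overleftarrow{dB_{s}}-\int_{t}^{T}D_{r}Z^{n+1}_{s}\,dW_{s}.
\end{eqnarray*}
The standard $L^{2}$ a~priori estimate applied to $(Y^{n+1}-Y^{n},Z^{n+1}-Z^{n})$ and to its Malliavin derivative, together with the strict contraction coming from the $\varepsilon<1$ in \eqref{eq:property_Lip_g}, shows the sequence is Cauchy in $\mathcal{B}^{2}(0,T;\mathbb{D}^{1,2})$. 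Passing to the limit gives the first assertion and the linear BDSDE
\begin{equation*}
D_{r}Y_{t}=\nabla h(X_{T})D_{r}X_{T}+\int_{t}^{T}\tilde f'(Y_{s})D_{r}Y_{s}\,ds+\int_{t}^{T}(\cdots)\,\overleftarrow{dB_{s}}-\int_{t}^{T}D_{r}Z_{s}\,dW_{s}.
\end{equation*}
Taking $r=t$ and using the continuity of $(r,t)\mapsto D_{r}Y_{t}$ in a neighbourhood of the diagonal, one identifies $D_{t}Y_{t}=Z_{t}$, as in \cite{pard:peng:94}.

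For the second assertion, I would exploit the Markov structure of $X$. Let $X^{t,x}$ denote the solution of the SDE starting from $x$ at time $t$, and let $(Y^{t,x},Z^{t,x})$ be the solution of the BDSDE with terminal data $h(X^{t,x}_{T})$. Since $X^{t,x}$ depends only on $W$-increments on $[t,T]$, the pair $(Y^{t,x}_{s},Z^{t,x}_{s})$ is $\mathcal{F}^{W}_{t,s}\vee\mathcal{F}^{B}_{s,T}$-measurable; in particular $(Y^{t,x}_{t},Z^{t,x}_{t})$ is $\mathcal{F}^{B}_{t,T}$-measurable and deterministic in $x$ up to this $B$-randomness. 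Setting $u(t,x)=Y^{t,x}_{t}$ and $v(t,x)=Z^{t,x}_{t}$, the flow property $X^{0,x}_{s}=X^{s,X^{0,x}_{s}}_{s}$ combined with uniqueness for the BDSDE yields $Y_{s}=u(s,X_{s})$ and $Z_{s}=v(s,X_{s})$ almost surely. The $\mathcal{F}^{B}_{t,T}$-measurability of $u(t,x)$ and $v(t,x)$ is precisely what the construction gives.

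The main obstacle I expect is the bookkeeping in the Malliavin-derivative BDSDE: one must verify that the linearised coefficients (involving $\partial_{x}g$, $\partial_{y}g$, $\partial_{z}g$ and $\tilde f'$) generate a contraction in an appropriate weighted norm on $\mathcal{B}^{2}(0,T;\mathbb{D}^{1,2})$, so that the Picard scheme converges there and one can legitimately pass to the limit in the linear equation for $D_{r}Y$. The doubly stochastic structure requires the same $(1+\varepsilon)/2$-type trick used in Section~\ref{sect:mono_BDSDE} rather than the classical Gronwall argument of \cite{pard:peng:94}, but this is routine once the iteration is set up.
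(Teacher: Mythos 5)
Your proposal is correct in substance, and for the first assertion it follows essentially the paper's own route: a Picard scheme, Malliavin differentiability of each iterate, a linear BDSDE for the derivative with the $(1+\eps)/2$-type contraction, passage to the limit, and identification of $Z_s$ as the left diagonal limit $\lim_{r\uparrow s}D_rY_s$ from the equation for $D_rY_t$, $r<t$ (the paper cites Proposition 2.3 of \cite{pard:peng:94} and Section 7.1 of \cite{bach:lasm:mato:13}; your reference to Proposition 2.2 is a slip, and since $g$ is only Lipschitz the coefficients $\partial_xg,\partial_yg,\partial_zg$ in your linearised equation should be read, as in the paper, as bounded processes $G^{x},G^{y},G^{z}$ obtained from Nualart's chain rule rather than genuine derivatives). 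Your preliminary global truncation of $f$ into a Lipschitz $C^1_b$ function $\tilde f$, justified by the a priori bound on $Y$ and uniqueness, is a clean variant of the paper's device, which instead keeps $f$ and uses the boundedness of the iterates $Y^m$ to differentiate $y\mapsto y|y|^q$ along the scheme; the two are interchangeable. Where you genuinely diverge is the second assertion: you obtain $Y_t=u(t,X_t)$, $Z_t=v(t,X_t)$ directly from the flow property, uniqueness, and the measurability of $(Y^{t,x}_s,Z^{t,x}_s)$ with respect to $\tri^W_{t,s}\vee\tri^B_{s,T}$, whereas the paper proves the representation by induction along the Picard iterates, taking conditional expectations with respect to $\cG_t$ and invoking the Markov property at each step, then passing to the limit as in \cite{elka:peng:quen:97}. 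Your route is shorter, but it leans on a substitution lemma (joint measurability of $(t,x,\omega)\mapsto Y^{t,x}_t$ and the identification $Y_s=Y^{s,X_s}_s$ by plugging the $\tri^W_s$-measurable, hence independent of the relevant future noise, random point $X_s$ into the parametrised family), which should be stated and justified; the paper's iterate-by-iterate construction avoids this and delivers the refined $\tri^B_{t,T}$-measurability of $u(t,x)$, $v(t,x)$ automatically from the conditional-expectation step. With that substitution argument made explicit, your proof is complete.
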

\begin{proof}
We just sketch the proof. The technical arguments can be found in \cite{bach:lasm:mato:13} or \cite{elka:peng:quen:97}. It is known (see e.g. proof of Theorem 1.1 in \cite{pard:peng:94}) that the solution $(Y,Z)$ of the previous BSDE is obtained by passing to the limit in the following iteration scheme:
$$ (Y^0,Z^0)=(0,0)$$
$$ Y^{m+1}_t=h(X_T)-\int_t^T Y_s^m|Y_s^m|^{q}ds +\int_t^T g(s,X_s,Y^m_s,Z^m_s)\overleftarrow{dB_s} -\int_t^T Z^{m+1}_s dW_s.$$
Our goal here is to show by induction that  for all $m \in \N$ $(Y^m,Z^m)  \in  \mathcal{B}^2(0,T;\mathbb{D}^{1,2})$ and that $(Y^m,Z^m)$ converges in $L^2([0,T] , \mathbb{D}^{1,2} \times \mathbb{D}^{1,2})$ to $(Y,Z)$. 

It is clear that this is true at step 0 since constants are Malliavin differentiable. Let us suppose now that $(Y^m,Z^m)  \in  \mathcal{B}^2(0,T;\mathbb{D}^{1,2})$. By Proposition 1.2.4 in \cite{nual:95} we know that $h(X_T)\in\mathbb{D}^{1,2}$ and $g(s,X_s,Y^m_s,Z^m_s)\in\mathbb{D}^{1,2}$, since $h$ and $g$ are supposed Lipschitz continuous and $X_T$, $Y^m_s$ and $Z^m_s$ are in $\mathbb{D}^{1,2}$. With Lemma 4.2 in \cite{bach:lasm:mato:13} we obtain that $\int_t^T g(s,X_s,Y^m_s,Z^m_s)d\overleftarrow{B}_s\in\mathbb{D}^{1,2}$. Moreover, since $h$ is bounded, $Y^m$ is also bounded (see \eqref{eq:bound_Y}) and $x\mapsto x|x|^{q}\in C^1(\R)$. Therefore $Y_t^m |Y_t^m|^{q}\in\mathbb{D}^{1,2}$. Then following the arguments of section 7.1 in \cite{bach:lasm:mato:13}, we obtain that $(Y^{m+1},Z^{m+1})  \in  \mathcal{B}^2(0,T;\mathbb{D}^{1,2})$ with $D_s Y^{m+1}_t = D_s Z^{m+1}_t = 0$ for $0 \leq t \leq s \leq T$ and for $0 \leq s \leq t \leq T$
\begin{eqnarray*}
D_s Y^{m+1}_t & = & H^x_T (D_s X_T) -(q+1) \int_t^T |Y^m_u|^q D_s Y^m_u du \\
&+ &  \int_t^T  \left[ G^{x,m}_u D_s X_u +G^{y,m}_u D_s Y^m_u +G^{z,m}_u D_s Z^m_u \right] d\overleftarrow{B}_u - \int_t^T D_sZ^{m+1}_u dW_u
\end{eqnarray*}
where $H^x_T$, resp. $G^{x,m}_u$, $G^{y,m}_u$ and $G^{z,m}_u$ are four bounded random variables  with $H^x$ (resp. $G^x_u$, $G^y_u$ and $G^z_u$) is $\tri^W_T$ (resp. $\tri_u$) measurable. The bound depends only on the Lipschitz constant of $h$ and $g$. Using the same arguments as in \cite{bach:lasm:mato:13} we can prove that $G^{x,m}$, $G^{y,m}$ and $G^{z,m}$ converge to bounded processes and $(Y^m,Z^m)$ converges in $L^2([0,T] , \mathbb{D}^{1,2} \times \mathbb{D}^{1,2})$ to $(Y,Z)$. Now for $0 < s < t$ 
\begin{eqnarray*}
D_s Y_t & = & Z_s + (q+1) \int_s^t |Y_u|^q D_s Y_u du \\
&- &  \int_s^t  \left[ G^{x}_u D_s X_u +G^{y}_u D_s Y_u +G^{z}_u D_s Z_u \right] d\overleftarrow{B}_u + \int_s^t D_sZ_u dW_u. 
\end{eqnarray*}
We then pass to the limit as $s$ goes to $t$ to obtain the desired result.

For the second part, we will show that there exists two random fields $u^m$ and $v^m$, such that for any $(t,x)\in [0,T]\times\mathbb{R}^m$, $u^m(t,x)$ and $v^m(t,x)$ are $\mathcal{F}^B_{t,T}$-measurable and 
$$ Y^m_t=u^m(t,X_t)\quad \mbox{and} \quad Z^m_t=v^m(t,X_t).$$
It is clear that this is true at step 0 by taking $u^0=v^0=0$. Let us suppose now that there exists two functions satisfying the measurability as stated in the proposition such that at step $m$ our processes verify:
$$Y^m_t=u^m(t,X_t)\quad \mbox{and} \quad Z^m_t=v^m(t,X_t).$$
We can write then:
\begin{eqnarray*}
Y^{m+1}_t & = & h(X_T)-\int_t^T u^m(s,X_s) |u^m(s,X_s)|^{q}ds \\
& +& \int_t^T g(s,X_s,u^m(s,X_s),v^m(s,X_s))d\overleftarrow{B}_s -  \int_t^T Z^{m+1}_s dW_s
\end{eqnarray*}
We take the expectation with respect to $\mathcal{G}_t$, which leads to:
\begin{eqnarray*}
Y^{m+1}_t&=&\E^{\mathcal{G}_t}\left[ h(X_T)-\int_t^T u^m(s,X_s) |u^m(s,X_s)|^{q}ds \right. \\
&& \qquad \qquad \left.+\int_t^T g(s,X_s,u^m(s,X_s),v^m(s,X_s))d\overleftarrow{B}_s\right]. 
\end{eqnarray*}
By Markov property, there exists a function $u^{m+1}$ with $u^{m+1}(t,x)$ is $\mathcal{F}^B_{T}$-measurable and 
$$Y^{m+1}_t=u^{m+1}(t,X_t).$$
We know that $Y^{m+1}_t$ is $\mathcal{F}_t$-measurable and we deduce that $u^{m+1}(t,x)$ is in fact $\mathcal{F}^B_{t,T}$-measurable. The convergence of $u^m$ and $v^m$ can be obtained by the same arguments as in \cite{elka:peng:quen:97}, section 4. 
\end{proof}

The last assumption implies that $x\mapsto h(x) \wedge n$ is a bounded Lipschitz function on $\R^d$. Moreover these conditions imply that $X_s \in \mathbb{D}^{1,2}$. Therefore we can apply the previous proposition to $(Y^n,Z^n)$ to establish:

\begin{prop}
There exists a function $\psi$ such that for every $s\in [0,T]$
$$ \E \left[Z^n_s\nabla\theta(X_s)\sigma(s,X_s) \right]=\E \left[Y^n_s\psi(s,X_s)\right],$$
where $\psi$ is given by:
\begin{eqnarray} \nonumber
\psi(t,x) & = & -\sum_{i=1}^d (\nabla\theta(x)\sigma(t,x))_i\frac{\diver(p\sigma^i)(t,x)}{p(t,x)} \\ \label{eq:def_auxili_fct_psi}
&-& \tr(D^2\theta(x)\sigma\sigma^*(t,x)) -\sum_{i=1}^d\nabla\theta(x).\nabla\sigma^i(t,x)\sigma^i(t,x).
\end{eqnarray}
\end{prop}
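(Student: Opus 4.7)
The strategy is twofold: first identify $Z^n_s$ in terms of $\nabla u^n$ using the Malliavin representation of the preceding lemma, then perform a spatial integration by parts against the density of $X_s$.

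For the first step, since $Y^n_t=u^n(t,X_t)$ and $(Z^n_t)_i=D^i_tY^n_t$ by the preceding lemma, applying the chain rule for the Malliavin derivative of $u^n(t,X_t)$ together with the classical identity $D_tX_t=\sigma(t,X_t)$ yields the pointwise representation
\begin{equation*}
Z^n_t=\nabla u^n(t,X_t)\,\sigma(t,X_t).
\end{equation*}
Under \eqref{boundcond}, \eqref{secondderiv} and \eqref{ellipcond} the random variable $X_s$ has a smooth positive density $p(s,\cdot)$ on $\R^d$ for each $s>0$, so the expectation of interest can be rewritten as
\begin{equation*}
\E\bigl[Z^n_s\cdot\nabla\theta(X_s)\sigma(s,X_s)\bigr]=\int_{\R^d}\nabla u^n(s,x)\cdot\sigma\sigma^*(s,x)\nabla\theta(x)^{T}\,p(s,x)\,dx.
\end{equation*}

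For the second step, set $b(x):=\sigma\sigma^*(s,x)\nabla\theta(x)^{T}p(s,x)$. Since $\theta$ has compact support, the field $b$ is compactly supported and integration by parts gives
\begin{equation*}
\int_{\R^d}\nabla u^n(s,x)\cdot b(x)\,dx=-\int_{\R^d}u^n(s,x)\,\diver b(x)\,dx.
\end{equation*}
Decomposing $\sigma\sigma^*=\sum_i\sigma^i(\sigma^i)^{*}$ (columns of $\sigma$) and using the product rule for divergence together with the expansion $\nabla(\nabla\theta\cdot\sigma^i)\cdot\sigma^i=(\sigma^i)^{T}D^2\theta\,\sigma^i+\nabla\theta\cdot(\nabla\sigma^i)\sigma^i$, one obtains
\begin{equation*}
\diver b=\sum_i(\nabla\theta\sigma)_i\,\diver(p\sigma^i)+p\,\tr(\sigma\sigma^*D^2\theta)+p\sum_i\nabla\theta\cdot(\nabla\sigma^i)\sigma^i.
\end{equation*}
Dividing by $p(s,x)>0$, which is allowed thanks to the uniform ellipticity, and inserting back produces exactly $-\psi(s,x)p(s,x)$, so the integral equals $\E[Y^n_s\psi(s,X_s)]$.

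The main obstacle is the rigorous identification $Z^n_t=\nabla u^n(t,X_t)\sigma(t,X_t)$: one has to justify the chain rule for the Malliavin derivative of $u^n(t,X_t)$, which requires enough regularity of the random field $u^n$. This is available under our hypotheses because $h\wedge n$ is bounded and Lipschitz (by \eqref{eq:hyp3}), and $b,\sigma$ are smooth and bounded with bounded derivatives, making the linear parabolic PDE governing $u^n$ uniformly parabolic with smooth coefficients. Once this identification is in place the integration by parts becomes routine because $\supp\theta$ is compact, and the only extra ingredient needed is the strict positivity of $p$, which permits dividing by $p$ in the first term of the expression for $\psi$.
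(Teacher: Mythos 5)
Your route is genuinely different from the paper's: you first identify $Z^n_s=\nabla u^n(s,X_s)\sigma(s,X_s)$ and then perform a deterministic integration by parts in space against the density $p(s,\cdot)$, whereas the paper never differentiates the random field $u^n$ in $x$. The paper works directly with $D_tY^n_t$ (which the preceding lemma identifies with $Z^n_t$), approximates the diagonal Malliavin derivative by $\langle DY^n_t,\nu^i_j\rangle_H$, applies the Malliavin duality (integration by parts) formula, and then converts the resulting increment $W^i_t-W^i_{t-1/j}$ into $-\int_{t-1/j}^t\diver(\sigma^ip)(u,X_u)/p(u,X_u)\,du$, using crucially that $u^n(t,\cdot)$ is $\tri^B_{t,T}$-measurable and independent of $W$, together with the Aronson-type estimates on $p$. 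Your divergence computation does reproduce the correct $\psi$, and the analytic input you need on $p$ (strict positivity and enough Sobolev regularity for $\diver(p\sigma^i)$ to make sense) is the same as the paper's.

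There is, however, a genuine gap at your key step. The identification $Z^n_s=\nabla u^n(s,X_s)\sigma(s,X_s)$ is justified by asserting that $u^n$ solves a uniformly parabolic \emph{deterministic} linear PDE with smooth coefficients; that is not what is available: $u^n$ is a weak (Sobolev) solution of the semilinear SPDE \eqref{eq:sing_SPDE} driven by the backward It\^o integral in $B$, and in this section $b,\sigma$ are only bounded and Lipschitz with \eqref{secondderiv} and \eqref{ellipcond} --- the Bally--Matoussi identification $Z^{n}=(\sigma^*\nabla u^n)(\cdot,X_\cdot)$ is established in the paper only under the stronger regularity $b\in C^2_b$, $\sigma\in C^3_b$ of Section 5. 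To make your first step rigorous you would have to prove spatial regularity of the random field $u^n$ (at least Lipschitz continuity of $u^n(s,\cdot)$, so that the Malliavin chain rule for Lipschitz maps applies and the weak gradient used in your integration by parts exists), for instance via stability estimates in the initial condition exploiting \eqref{eq:hyp3} and \eqref{eq:lip_cond_g_x}; this is precisely the regularity the paper's proof is designed to avoid by staying at the level of $D_tY^n_t$. Two smaller points: under the stated hypotheses $p$ is not smooth (only H\"older with $p\in L^2(\delta,T;H^2)$ and Gaussian bounds, which does suffice for your integration by parts if handled weakly), and your identity expressing $\E[Z^n_s\cdot\nabla\theta(X_s)\sigma(s,X_s)]$ as $\int\nabla u^n\cdot\sigma\sigma^*\nabla\theta\,p\,dx$ has a random right-hand side ($u^n$ is $\tri^B_{s,T}$-measurable), so an expectation over the $B$-noise must be kept throughout before concluding.
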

\begin{proof}
We would like to apply here the integration by parts formula (Lemma 1.2.2 in \cite{nual:95}) which states:
$$\E[G \langle DF,h \rangle_H]=\E[-F\langle DG,h \rangle_H+FGW(h)],$$
where $H:=\mathbb{L}^2([0,T],\R^d)$ and $F$ and $G$ are two random variables in $\mathbb{D}^{1,2}$. But in order to use this we need to rewrite our expectation to make
appear the scalar product in $H$. Actually we have:
$$\E\left[ Z^n_t.\nabla\theta(X_t)\sigma(t,X_t)\right]=\E\left[D_t Y^n_t. \nabla\theta(X_t)\sigma(t,X_t)\right] = \sum_{i=1}^d \E\left[D^i_t Y^n_t (\nabla\theta \sigma)_i(X_t)\right] $$
where $(\nabla\theta \sigma)(X_t) = \nabla\theta(X_t)\sigma(t,X_t)$ and $(\nabla\theta \sigma)_i(X_t)$ denotes the $i$-th component of $(\nabla\theta \sigma)(X_t)$. As in the proof of Proposition 17 in \cite{popi:06} we can use the following approximation:
$$\mathbb{P}-a.s.\quad D^i_t Y^n_t=\lim_{j\rightarrow \infty} \langle DY^n_t,\nu^i_j \rangle_H = \lim_{j\rightarrow \infty}\int_0^T  j\bold{1}_{[t-\frac{1}{j},t[}(s) (D_s Y^n_t).e_i ds$$
$(e_i)_{i=1,\ldots,d}$ being the canonical basis of $\R^d$. 

Let $t\in[0,T]$, by integrations by parts formula we get:
\begin{eqnarray*}
\E[\langle DY^n_t,\nu^i_j \rangle_H (\nabla\theta \sigma)_i(X_t)]&=&\E\left[ Y^n_t (\nabla\theta \sigma)_i(X_t)\int_0^T \nu^i_j(s)dW_s \right]\\
&-&\E \left[ Y^n_t\int_0^T \nu^i_j(s)D_s((\nabla\theta \sigma)_i(X_t))ds \right].
\end{eqnarray*}
Let us consider the first term:
\begin{eqnarray*}
&& \E\left[ Y^n_t(\nabla\theta \sigma)_i(X_t) \int_0^T \nu^i_j(s)dW_s \right] = j \E \left[ Y^n_t (\nabla\theta \sigma)_i(X_t) \left(W^i_t-W^i_{t-\frac{1}{j}} \right) \right]\\
&& \qquad \qquad \qquad  = j \E \left[ u^n(t,X_t) (\nabla\theta \sigma)_i(X_t)  \left(W^i_t-W^i_{t-\frac{1}{j}} \right) \right]\\
&& \qquad \qquad \qquad  = j \E \left[ \E \left[ u^n(t,X_t) \bigg| \tri^W_t \right] (\nabla\theta \sigma)_i(X_t)  \left(W^i_t-W^i_{t-\frac{1}{j}} \right) \right]\\
&& \qquad \qquad \qquad  = j \E \left[ v^n(t,X_t) (\nabla\theta \sigma)_i(X_t)  \left(W^i_t-W^i_{t-\frac{1}{j}} \right) \right]
\end{eqnarray*}
since $u^n(t,x)$ is $\tri^B_{t,T}$-measurable and $B$ and $W$ are independent. Now
\begin{eqnarray*}
&& \E \left[ v^n(t,X_t) (\nabla\theta \sigma)_i(X_t)  \left(W^i_t-W^i_{t-\frac{1}{j}} \right) \right] \\
&& \quad = -\E \left[ v^n(t,X_t) (\nabla\theta \sigma)_i(X_t) \int_{t-\frac{1}{j}}^t\frac{\diver(\sigma^i p)(u,X_u)}{p(u,X_u)}du \right] 
\end{eqnarray*}
where $p$ is the density of $X$ and $\sigma^i$ is the $i$-th column of the matrix $\sigma$. As in \cite{popi:06}, we use Lemmas 3.1 and 4.1 in \cite{pard:86} with the same arguments. For convenience let us just recall that from Theorems 7 and 10 in \cite{aron:68}, Theorem II.3.8 of \cite{stro:88} and Theorem III.12.1 in \cite{lady:solo:ural:68}, the density $p(x;.,.)$ exists and satisfies:
\begin{itemize}
\item $p(x;.,.) \in L^2 (\delta,T; H^2)$ for all $\delta  > 0$;
\item $p$ is H\"older continuous in $x$ and satisfies the following inequality for $s \in ]0, T ]$: 
\begin{equation}\label{eq:Aronson_estim}
\frac{\exp \left( -C \frac{|y-x|^2}{s} \right)}{Cs^{m/2}} \leq p(x;s,y) \leq \frac{C \exp \left( -\frac{|y-x|^2}{Cs} \right)}{s^{m/2}};
\end{equation}
\item $y \mapsto \partial p/ \partial y_i(x;.,.)$ is H\"older continuous in $y$. 
\end{itemize}
In this proof we omit the variable x in $p(x ; ., .)$. The previous properties of $p$ ensure that $\diver(p\sigma^i)/p$ is well-defined and regular. Let $j$ go to $+\infty$, we have:
\begin{eqnarray*}
\E\left[D^i_t Y^n_t (\nabla\theta \sigma)_i(X_t)\right]  & =&  -\E \left[ Y^n_t (\nabla\theta \sigma)_i(X_t) \frac{\diver(\sigma^i p)(t,X_t)}{p(t,X_t)} \right]  \\
&-&\E\left[Y^n_t D_t(\nabla\theta(X_t)\sigma(t,X_t))\right]. 
\end{eqnarray*}
From the regularity assumptions on $\theta$ and $\sigma$ we can compute directly the last term to obtain:
$$\E\left[ Z^n_t.\nabla\theta(X_t)\sigma(t,X_t)\right]=\E\left[ Y^n_t\psi(t,X_t)\right]$$
with $\psi$ given by \eqref{eq:def_auxili_fct_psi}.
\end{proof}

As in the case $q > 2$, we have to prove that Equality \eqref{eq:localization} holds with suitable integrability conditions. Now Equation \eqref{eq:deriv} becomes:
\begin{eqnarray} \nonumber
&& \E(Y^{n}_{T} \Phi^{\al}(X_{T})) = \E(Y^{n}_{0} \Phi^{\al}(X_{0})) +  
 \E \int_{0}^{T} Z^{n}_{r}. \nabla (\Phi^{\al}) (X_{r}) \sigma(r,X_{r}) dr \\ \nonumber
&& \qquad + \E \int_{0}^{T} \Phi^{\al}(X_{r}) (Y^{n}_{r})^{1+q}  dr + \E \int_{0}^{T}  Y^{n}_{r} \ope (\Phi^{\al})(X_{r}) dr \\
&& =  \E(Y^{n}_{0} \Phi^{\al}(X_{0})) + \E \int_{0}^{T} \Phi^{\al}(X_{r}) (Y^{n}_{r})^{1+q}  dr + \E \int_{0}^{T}  Y^{n}_{r} \Psi_{\al}(r,X_{r}) dr 
\end{eqnarray}
with $\Psi_{\alpha}$ the following function: for $t \in ]0,T]$ and $x \in \R^{d}$ 
\begin{eqnarray*}
\Psi_{\al}(t,x) & = & \nabla (\Phi^{\al}) (x).b(t,x) - \frac{1}{2} \tr (D^{2}(\Phi^{\al})(x) \sigma \sigma^{*}(t,x)) \\
& - & \sum_{i=1}^{d} \left( (\nabla (\Phi^{\al}) (x) \sigma(t,x))_{i} 
\ \frac{\diver(p(t,x) \sigma^{i}(t,x))}{p(t,x)} \right) \\ \label{eq:derivalpha}
& - & \sum_{i=1}^{d} \left( \nabla (\Phi^{\al})(x).[\nabla\sigma^{i}(t,x) \sigma^{i}(t,x)] \right). 
\end{eqnarray*}
In \cite{popi:06} it is proved that for a fixed $\eps >0$ and $p=1+1/q$:
$$\Phi^{-\al(p-1)} \left| \Psi_{\al} \right|^{p} \in L^{\infty}([\eps,T] \times \R^{d}).$$
If it is true, then the last term in \eqref{eq:derivalpha} satisfies:
$$\E \int_{t}^{T} \left| Y^{n}_{r} \Psi_{\al}(r,X_{r}) \right| dr \leq C \left( \E \int_{t}^{T} \Phi^{\al}(X_{r}) (Y^{n}_{r})^{1+q} dr \right)^{\frac{1}{1+q}}$$ 
and the end of the proof will be the same as in the case $q > 2$.

\section{Link with SPDE's} \label{sect:SPDE}

In the introduction, we have said that there is a connection between doubly stochastic backward SDE whose terminal data is a function of the value at time $T$ of a solution of a SDE (or forward-backward system), and solutions of a large class of semilinear parabolic stochastic PDE. Let us precise this connection in our case.  

To begin with, we modify the equation \eqref{eq:eds}. We denote by $X^{t,x}$ the solution of the SDE \eqref{eq:eds2} with $b \in C^2_b$ and $\sigma \in C^3_b$. Therefore $b$ and $\sigma$ satisfy the assumptions (\ref{eq:lipcond})-(\ref{eq:growthcond}). We consider the following doubly stochastic BSDE for $t \leq s \leq T$:
\begin{equation} \label{eq:fbsde} 
Y^{t,x}_{s}  = h(X^{t,x}_{T}) - \int_{s}^{T} Y^{t,x}_{r}|Y^{t,x}_{r}|^{q} dr + \int_s^T g(r,X^{t,x}_r,Y^{t,x}_r,Z^{t,x}_r) \overleftarrow{dB_r}  - \int_{s}^{T} Z^{t,x}_{r} dW_{r},
\end{equation}
where $h$ is a function defined on $\R^{d}$ with values in $\R$. The two equations \eqref{eq:eds2} and \eqref{eq:fbsde} are called a forward-backward system. This system is connected with the stochastic PDE \eqref{eq:sing_SPDE}  with terminal condition $h$.

More precisely for any $n \in \N^*$, let $(Y^{n,t,x},Z^{n,t,x})$ be the solution of the BDSDE \eqref{eq:fbsde} with terminal condition $h(X^{t,x}_T)\wedge n$. We know that 
$$0 \leq Y^{n,t,x}_{s} \leq  \left( \frac{1}{q(T-s)+\frac{1}{n^{q}}} \right)^{\frac{1}{q}} \leq n.$$
And the generator $y \mapsto -y|y|^q$ is Lipschitz continuous on the interval $[-n,n]$. Moreover if we assume Assumption \eqref{eq:hyp3}, then $h \wedge n$ is a Lipschitz and bounded function on $\R^d$. Hence $h\wedge n$ belongs to $L^2(\R^d,\rho^{-1}(x) dx)$ provided the function $\rho^{-1} \in L^1(\R^d,dx)$. Since we have imposed that $g(t,x,y,0)=0$, now the all conditions in \cite{ball:mato:01} are satisfied. 
\begin{prop}[Theorem 3.1 in \cite{ball:mato:01}]
There exists a unique weak solution $u^n \in \cH(0,T)$ of the SPDE \eqref{eq:sing_SPDE}  with terminal function $h\wedge n$. Moreover $u^n(t,x) = Y^{n,t,x}_t$ and 
$$Y^{n,t,x}_s = u^n(s,X^{t,x}_s), \quad Z^{n,t,x}_s = (\sigma^* \nabla u^n)(s,X^{t,x}_s).$$
\end{prop}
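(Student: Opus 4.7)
The plan is to invoke Proposition \ref{prop:existence_sol_monotone_SPDE} with data $(f,g,h\wedge n)$, so the first task is to verify its hypotheses. The generator $f(y)=-y|y|^q$ does not depend on $(t,x,z)$: the map $y\mapsto y|y|^q$ is non-decreasing, hence $f$ is monotone with $\mu=0$ and \eqref{ineq:monoton} holds; \eqref{ineq:lip_Z} is trivially satisfied with $K_f=0$; and $|f(y)|=|y|^{q+1}$ gives the polynomial growth \eqref{eq:property_growth_f} with $p=q+1$ and $f(t,0)=0$. The coefficient $g$ satisfies \eqref{eq:property_Lip_g} by assumption, and the standing hypothesis $g(t,x,y,0)=0$, together with $f(t,x,0,0)=0$, makes \eqref{eq:L2_integ} trivially true.

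Next I would check the integrability condition \eqref{eq:cond_L_2p_SPDE}. Since the three quantities $h\wedge n$, $f(\cdot,x,0,0)$ and $g(\cdot,x,0,0)$ are respectively bounded by $n$ and identically zero, it reduces to $n^{2p}\int_{\R^d}\rho^{-1}(x)\,dx<\infty$, which holds as soon as $\rho^{-1}\in L^1(\R^d)$ (for instance $\rho(x)=(1+|x|)^\kappa$ with $\kappa>d$). Proposition \ref{prop:existence_sol_monotone_SPDE} then produces a unique weak solution $u^n\in\cH(0,T)$ of the truncated SPDE \eqref{eq:sing_SPDE} with terminal datum $h\wedge n$, and identifies it by $u^n(t,x)=Y^{n,t,x}_t$.

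The remaining point is the Markovian representation $Y^{n,t,x}_s=u^n(s,X^{t,x}_s)$ and $Z^{n,t,x}_s=(\sigma^*\nabla u^n)(s,X^{t,x}_s)$ for $s\in[t,T]$. For the first identity, the flow property of \eqref{eq:eds2} yields $X^{t,x}_r=X^{s,X^{t,x}_s}_r$ for $r\geq s$; inserting this in \eqref{eq:fbsde} on $[s,T]$ realises $(Y^{n,t,x}_r,Z^{n,t,x}_r)_{r\in[s,T]}$ as a solution, conditionally on $X^{t,x}_s$, of the BDSDE started at $X^{t,x}_s$ at time $s$. Uniqueness of that BDSDE (Theorem \ref{thm:existence_sol}) and the already obtained identification at the initial time give $Y^{n,t,x}_s=u^n(s,X^{t,x}_s)$. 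The $Z$-representation is the main technical step and is handled as in Theorem 3.1 of \cite{ball:mato:01}: one applies the Itô-Ventzel formula to the composition $u^n(s,X^{t,x}_s)$, exploiting the $C^1$-regularity of the flow $x\mapsto X^{t,x}_s$ (ensured by $b\in C^2_b$, $\sigma\in C^3_b$) and using the Bally-Matoussi equivalence-of-norms inequality to identify the martingale integrand $Z^{n,t,x}_s$ with $(\sigma^*\nabla u^n)(s,X^{t,x}_s)$ almost everywhere on $[t,T]\times\Omega$.
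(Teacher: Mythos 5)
Your argument is correct, but it follows a genuinely different route from the paper. The paper does not pass through its own monotone result (Proposition \ref{prop:existence_sol_monotone_SPDE}) at all: it observes that by comparison $0 \leq Y^{n,t,x}_s \leq \bigl(q(T-s)+n^{-q}\bigr)^{-1/q} \leq n$, so the generator $y \mapsto -y|y|^{q}$ is Lipschitz on the range $[-n,n]$ actually visited by the solution, and that under \eqref{eq:hyp3} the truncated datum $h\wedge n$ is bounded and Lipschitz, hence in $L^{2}(\R^d,\rho^{-1}dx)$ once $\rho^{-1}\in L^{1}$; together with $g(t,x,y,0)=0$ this places the truncated problem squarely in the Lipschitz framework of Theorem 3.1 of \cite{ball:mato:01}, which is then cited wholesale and already contains all three identities, including $Y^{n,t,x}_s=u^n(s,X^{t,x}_s)$ and $Z^{n,t,x}_s=(\sigma^*\nabla u^n)(s,X^{t,x}_s)$. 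Your route instead checks Assumptions (A) and \eqref{eq:cond_L_2p_SPDE} for the untruncated monotone generator and invokes Proposition \ref{prop:existence_sol_monotone_SPDE}; this buys a little generality (you never need the Lipschitz regularity of $h\wedge n$, only its boundedness and $\rho^{-1}\in L^1$), but it costs you an extra step, because that proposition only asserts $u^n(t,x)=Y^{n,t,x}_t$, so the along-the-path representation must be re-derived — which you do, correctly in outline, via the flow property and uniqueness of the BDSDE for the $Y$ part and by deferring to \cite{ball:mato:01} for the $Z$ part. One small caveat on that last step: a weak Sobolev solution $u^n$ is not regular enough for a direct application of the It\^o--Ventzel formula; in \cite{ball:mato:01} the identification $v=\sigma^*\nabla u$ comes from their Theorem 2.1 (representation of solutions of the linear SPDE combined with the equivalence-of-norms estimate), so your appeal to that reference is the right one even if the mechanism you name is not quite the one used there. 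The paper's truncation argument is shorter precisely because the boundedness of $Y^n$ makes the heavier monotone machinery unnecessary at this stage.
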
 
The space $\cH(0,T)$ is defined in Section \ref{sect:main_results}, Definition \ref{def:sobolev_space} and the notion of weak solution is precised in Section \ref{sect:mono_BDSDE}, Definition \ref{def:weak_sol_SPDE}. 

Remember that we have defined a process $(Y^{t,x},Z^{t,x})$ solution in the sense of the Definition \ref{definsolution} of the backward doubly stochastic differential equation \eqref{eq:fbsde} with singular terminal condition $h$ (see Theorem \ref{thm:exist_min_sol} and the beginning of the section \ref{sect:cont} on continuity at time $T$). The process $Y$ is obtained as the increasing limit of the processes $Y^n$: 
$$Y^{t,x}_s = \lim_{n \to +\infty} Y^{n,t,x}_s\quad \mbox{a.s.}.$$
Therefore we can define the following random field $u$ as follows:
$$u(t,x) = Y^{t,x}_t = \lim_{n \to +\infty} Y^{n,t,x}_t = \lim_{n \to +\infty} u^{n}(t,x).$$
Our aim is to prove Theorem \ref{thm:sol_SPDE}, that is, $u$ is also a weak solution of \eqref{eq:sing_SPDE} with the singular terminal condition $h$. For any $n$ we have a.s.
$$0\leq Y^{n,t,x}_s \leq \left( \frac{1}{q(T-s)+\frac{1}{n^{q}}} \right)^{\frac{1}{q}} \leq \left( \frac{1}{q(T-s)} \right)^{\frac{1}{q}}.$$
In particular for any $(t,x)$
$$0\leq u^{n}(t,x) \leq \left( \frac{1}{q(T-t)} \right)^{\frac{1}{q}}$$
and hence $u$ satisfies the same estimate. Thus $u$ is bounded on $[0,t]\times \R^d$ and in $L^2_\rho(\R^d)$. By dominated convergence theorem, for any $\delta > 0$, $u$ satisfies \eqref{eq:weak_sol_int_cond} and \eqref{eq:weak_sol_cont_cond} for any $0\leq s \leq t \leq T-\delta$. 

Moreover we have $Z^{n,t,x}_s = (\sigma^* \nabla u^n)(s,X^{t,x}_s)$ and from the proof on Theorem \ref{thm:exist_min_sol} we know that the sequence of processes $(Z^{n,t,x}_s, \ s \geq t)$ converges in $L^2((0,T-\delta)\times \Omega)$ for any $\delta > 0$ to $Z^{t,x}$. Hence the sequence $u^n$ converges in $\cH(0,t)$ to $u$. From Proposition \ref{prop:sharp_estim_Z} we have the a priori estimate \eqref{ineq:sharp_estim_Z}:
$$\E \int_{0}^{T} (T-s)^{2/q} | Z^{n,t,x}_{s} |^{2} ds \leq \frac{8+KT}{1-\eps} \left( \frac{1}{q} \right)^{2/q}$$
(as usual $Z^{n,t,x}_s = 0$ if $s < t$). Therefore we deduce
$$\E \int_{0}^{T} (T-s)^{2/q} | (\sigma^* \nabla u^n)(s,X^{t,x}_s) |^{2} ds \leq \frac{8+KT}{1-\eps} \left( \frac{1}{q} \right)^{2/q}.$$
We multiply each side by $\rho^{-1}(x)$, we integrate w.r.t. $x$ and we use Proposition 5.1 in \cite{ball:mato:01} to have:
$$\E \int_{\R^d} \int_{0}^{T} (T-s)^{2/q} | (\sigma^* \nabla u^n)(s,x) |^{2} \rho^{-1}(x) dx ds \leq C$$
where the constant $C$ does not depend on $n$. With the Fatou lemma we have the same inequality for $u$. 
$$\E \int_{\R^d} \int_{0}^{t} (T-s)^{2/q} | (\sigma^* \nabla u)(s,x) |^{2} \rho^{-1}(x) dx ds < \infty.$$

Now for every function $\Psi \in C^{1,\infty}_c([0,T] \times \R^d)$, $u^n$ satisfies \eqref{eq:weak_form_spde}, therefore for every $0\leq r \leq t < T$, $u^n$ satisfies also:
\begin{eqnarray}\label{eq:weak_form_sing_spde}
& &\int_r^t\int_{\R^d}u^n(s,x)\partial_s\Psi(s,x)dxds+\int_{\R^d}u^n(r,x)\Psi(r,x) dx -\int_{\R^d}u^n(t,x)\Psi(t,x)dx\\ \nonumber
&&\qquad - \frac{1}{2}\int_r^t \int_{\R^d}(\sigma^*\nabla u^n)(s,x)(\sigma^*\nabla\Psi)(s,x)dx ds\\ \nonumber
&&\qquad  -\int_r^t \int_{\R^d}u^n(s,x) \diver\left(\left(b-\widetilde{A}\right)\Psi\right)(s,x)dx ds\\ \nonumber
&&= -\int_r^t \int_{\R^d}\Psi(s,x)u^n(s,x)|u^n(s,x)|^{q} dx ds\\ \nonumber
&&\qquad +\int_r^t \int_{\R^d} \Psi(s,x) g(s,x,u^n(s,x),\sigma^*\nabla u^n(s,x)) dx \overleftarrow{dB_s} .
\end{eqnarray}
But using monotone convergence theorem or the convergence of $u^n$ to $u$ in $\cH(0,t)$, we can pass to the limit as $n$ goes to $+\infty$ in \eqref{eq:weak_form_sing_spde} and we obtain that $u$ is a weak solution of \eqref{eq:sing_SPDE}  on $[0,T-\delta]\times \R^d$ for any $\delta > 0$.

The only trouble concerns the behavior of $u$ near $T$. Since $u^n$ satisfies \eqref{eq:weak_form_spde} for any $n$, and by the integrability or regularity assumptions on $u^n$, $b$ and $\sigma$, we have
$$\lim_{t\to T} \int_{\R^d}u^n(t,x)\psi(x) dx = \int_{\R^d}(h(x)\wedge n)\psi(x)dx$$
for any function $\psi \in C^{\infty}_c(\R^d)$. Therefore by monotonicity
$$ \liminf_{t\uparrow T} \int_{\R^d}u(t,x)\psi(x)dx \geq  \liminf_{t\uparrow T} \int_{\R^d}u^n(t,x)\psi(x)dx = \int_{\R^d} (h(x)\wedge n) \psi(x)dx$$
for any $n$. Hence a.s.
\begin{equation}\label{eq:behavior_u_T}
 \liminf_{t\uparrow T} \int_{\R^d}u(t,x)\psi(x)dx \geq  \int_{\R^d} h(x) \psi(x)dx.
 \end{equation}
Our aim is to prove the converse inequality with the $\limsup$. In the second section we have proved Equation \eqref{eq:localization} with suitable integrability condition on all terms: 
\begin{eqnarray*} 
\E(h(X^{t,x}_T) \tet(X^{t,x}_{T})) & = & \E(u(t,x) \tet(x)) + \E \int_{t}^{T} \tet(X^{t,x}_{r}) (Y^{t,x}_{r})^{1+q} dr \\
& +&  \E \int_{t}^{T} Y^{t,x}_{r} \ope \theta (X^{t,x}_{r}) dr + \E \int_{t}^{T} Z^{t,x}_{r} . \nabla \tet (X^{t,x}_{r}) \sigma(r,X^{t,x}_{r}) dr
\end{eqnarray*}
for any smooth functions $\tet$ such that its compact support is strictly included in $\cR = \left\{ h <+ \infty \right\}$. If we integrate this w.r.t. $dx$ (no weight function $\rho$ is needed here since $\tet$ is of compact support) and we let $t$ go to $T$ we obtain:
\begin{eqnarray*} 
\lim_{t \to T} \int_{\R^d} \E(h(X^{t,x}_T) \tet(X^{t,x}_{T})) dx& = &\lim_{t \to T} \int_{\R^d} \E(u(t,x) \tet(x)) dx.
\end{eqnarray*}
By dominated convergence theorem this gives:
$$\lim_{t \to T} \E \left(  \int_{\R^d} u(t,x) \tet(x) dx \right) = \int_{\R^d} h(x) \tet(x) dx$$
for any function $\tet \in C^2_c(\R^d)$ with $\supp(\tet)\cap \cS = \emptyset$. Fatou's lemma implies that 
$$ \E \left(  \liminf_{t \to T} \int_{\R^d} u(t,x) \tet(x) dx \right) \leq \int_{\R^d} h(x) \tet(x) dx.$$

With Inequality \eqref{eq:behavior_u_T}, we obtain that for any $\psi \in C^{\infty}_c(\R^d)$ a.s.
$$\liminf_{t \to T} \int_{\R^d} u(t,x) \psi(x) dx = \int_{\R^d} h(x) \psi(x) dx.$$

\begin{rem}
If $g$ does not depend of $Z$ (or on $\nabla u$), and if $g \in C^{0,2,3}_b([0,T] \times \R^d \times R ; \R^d)$, then from \cite{buck:ma:01a}, $u^n$ is a stochastically bounded viscosity solution of the SPDE \eqref{eq:sing_SPDE} on $[0,T]\times \R^d$ and $u$ is also a stochastically bounded viscosity solution of the SPDE \eqref{eq:sing_SPDE} on $[0,T-\delta]\times \R^d$ for any $\delta > 0$.
\end{rem}

Now let $\widetilde u$ be a non negative weak solution of \eqref{eq:sing_SPDE} on $[0,T-\delta]\times \R^d$ for any $\delta > 0$. It means that for any $\delta > 0$, $\widetilde u\in \cH(0,T-\delta)$ and $\widetilde u$ satisfies \eqref{eq:weak_sol_int_cond} and \eqref{eq:weak_sol_cont_cond} and \eqref{eq:weak_form_sing_spde} on $[0,T-\delta]$. Moreover we assume that $\liminf_{t\to T} \widetilde u(t,x) \geq h(x)$ a.e. on $\Omega \times \R^d$. We follow the proof of uniqueness for Theorem 3.1 in \cite{ball:mato:01}. We define
$$\widetilde Y^{t,.}_s =\widetilde u(s,X^{t,.}_s), \quad \widetilde Z^{t,.}_s =\widetilde (\sigma^* \nabla u)(s,X^{t,.}_s).$$
Then by the same arguments as in \cite{ball:mato:01}, $(\widetilde Y^{t,x}_s ,\widetilde Z^{t,x}_s)$ solves the BDSDE \eqref{eq:sing_BDSDE} on any interval $[0,T-\delta]$. Moreover a.s.
$$\liminf_{s\to T} \widetilde Y^{t,x}_s \geq h(X^{t,x}_T) = \xi.$$ 
By Lemma \ref{lem:upper_bound} and the proof of minimality of the solution of the BDSDE (Theorem \ref{thm:exist_min_sol}), we deduce that for every $n$, a.s.
$$Y^{t,x,n}_s \leq \widetilde Y^{t,x}_s \leq \left( \frac{1}{q(T-s)}\right)^{1/q}.$$
Thus $u(t,x)\leq \widetilde u(t,x)$ a.e. on $\Omega \times [0,T] \times \R^d$. And $u$ is the minimal weak solution of the SPDE \eqref{eq:sing_SPDE} with singular terminal condition.

\bibliography{biblio_sing_SPDE}

\end{document}